\definecolor{hot}{RGB}{65,105,225}
\theoremstyle{plain}
\newtheorem{theorem}{Theorem}[section]
\newtheorem{lemma}[theorem]{Lemma}
\newtheorem{proposition}[theorem]{Proposition}
\newtheorem{prop-def}[theorem]{Proposition-Definition}
\newtheorem{corollary}[theorem]{Corollary}
\theoremstyle{definition}
\newtheorem{defn}[theorem]{Definition}
\newtheorem{remark}[theorem]{Remark}
\newtheorem{rmk}[theorem]{Remark}
\theoremstyle{remark}
\numberwithin{equation}{section}
\def\m{\mathfrak m}
\def\n{\mathbf n}
\def\bC{\mathbb{C}}
\def\bP{\mathbb{P}}
\def\bA{\mathbb{A}}
\def\cI{\mathcal{J}}
\def\ord{\mathrm{ord}}
\def\Spec{\mathrm{Spec}}
\def\n{\mathbf{n}}
\def\L{\mathbb{L}}
\def\l{\mathbf{l}}
\def\A{\mathbb{A}}
\def\O{\mathcal{O}}
\DeclareMathOperator{\N}{\mathbb N}
\DeclareMathOperator{\Z}{\mathbb Z}
\DeclareMathOperator{\C}{\mathbb C}
\DeclareMathOperator{\\k}{\mathbb k}
\DeclareMathOperator{\Cont}{Cont}
\DeclareMathOperator{\rank}{rank}
\DeclareMathOperator{\codim}{codim}
\DeclareMathOperator{\X}{\mathcal{X}}
\DeclareMathOperator{\LL}{\mathcal{L}}
\DeclareMathOperator{\Aa}{\mathscr{A}}
\DeclareMathOperator{\Bb}{\mathscr{B}}
\DeclareMathOperator{\XX}{\mathfrak{X}}
\DeclareMathOperator{\G}{\mathcal{G}}
\DeclareMathOperator{\B}{\mathcal{B}}
\def\cM{\mathcal{M}}
\def\ra{\rightarrow}
\def\be{\begin{equation}}
\def\ee{\end{equation}}
\def\subsection{\@startsection{subsection}{3}%
  \z@{.5\linespacing\@plus.7\linespacing}{.1\linespacing}%
  {\normalfont\bfseries}}
\title[On contact loci of hyperplane arrangements]{\bf On contact loci of hyperplane arrangements}  
\author{Nero Budur}
\address{Department of Mathematics, KU Leuven, Celestijnenlaan 200B, 3001 Leuven, Belgium}
\email{nero.budur@kuleuven.be}
\author{Tran Quang Tue}
\address{Department of Mathematics, KU Leuven, Celestijnenlaan 200B, 3001 Leuven, Belgium}
\curraddr{Faculty of Fundamental Sciences, Phenikaa University, Yen Nghia, Ha Dong, Hanoi 12116, Vietnam.}
\email{tranquangtue.math@gmail.com}
\keywords{arc space, jet scheme, contact locus, hyperplane arrangement, motivic zeta function}
\subjclass[2010]{14B05, 32S50, 14E18, 14N20, 32S22.}
\begin{document}           

\begin{abstract}
{We give an explicit expression for the contact loci of hyperplane arrangements and show that their cohomology rings are combinatorial invariants. We also give an expression for the restricted contact loci in terms of Milnor fibers of associated hyperplane arrangements. We prove the degeneracy of a spectral sequence related to the restricted contact loci of a hyperplane arrangement and which conjecturally computes algebraically the Floer cohomology of iterates of the Milnor monodromy. We give formulas for the Betti numbers of contact loci and restricted contact loci in generic cases.}
\end{abstract}

\maketitle                 

\tableofcontents

\section{Introduction}\label{secintro}
Let $X$ be a smooth variety of dimension $n\ge 1$ over the field of complex numbers, and $f$ be a non-constant morphism from $X$ to the complex affine line $\A^1$. We denote by $\LL_m(X)$ the space of $m$-jets on $X$. The {\it $m$-contact locus} $\X_m(f)$ of $f$ is defined to be the set of $m$-jets $\gamma\in \LL_m(X)$ with order of vanishing along $f$ precisely $m$. 


The contact loci appear in the definition of the naive motivic zeta function
$$Z^{\text{naive}}(f,T)=\sum_{m\geq 0}[\X_m(f)]\L^{-nm}T^m,$$
defined over $\cM=K_0(Var_{\C})[\L^{-1}]$, the localization at $\L=[\A^1]$ of the Grothendieck ring of complex varieties, see \cite{DL}. 

A general stratification result by locally closed subsets for contact loci can be found in \cite[Theorem A]{EiLaMu04}. This is useful for determining additive invariants such as the class in the Grothendieck ring or other Euler-characteristic type of invariants. However it is in general very difficult to describe the topology of contact loci, for example, to determine their cohomology rings. In this article we address this issue for hyperplane multi-arrangements. We provide for example a combinatorial answer for the cohomology rings of the contact loci in this case.

We let therefore $X=\A^n$ and $f=h_1^{s_1}\ldots h_d^{s_d}$, where $n, d, s_i\ge 1$ are  integers, and $h_i$ are polynomials of degree $1$, pairwise distinct up to multiplying by a constant. We let $H_i$ denote the zero set of $h_i$ in $\A^n$. We denote by $\Aa$ the associated  hyperplane multi-arrangement, that is,  the set of hyperplanes $\{H_1,\ldots, H_d\}$ together with the multiplicity function $s(H_i)= s_i$ on it. We also use the notation $\X_{m}(\mathscr{A})$ for $\X_{m}(f)$. 

An {\it edge} of $\Aa$ is a non-empty intersection of hyperplanes in $\Aa$. By the {\it intersection lattice} $L(\Aa)$ we mean the set containing all edges and $X$, together with the inclusion relations. If $\Aa$ is  {\it central}, that is if each $h_i$ is homogeneous, then $L(\Aa)$ is indeed a lattice; in general $L(\Aa)$ is only a semi-lattice, see \cite[2.2]{Dimca17}. The multiplicity function $s$ extends to the intersection lattice by setting $s(Z)=\sum_{H_i\supset Z}s_i$ for an edge $Z$ and $s(X)=0$. The minimal edges with respect to inclusion have all the same codimension; this is called the  {\it rank} of $\Aa$. By the {\it combinatorial type} of a  hyperplane multi-arrangement, we mean the data consisting of the intersection lattice  and the multiplicity function. Two hyperplane multi-arrangements are {\it combinatorially equivalent} if they have the same combinatorial type.

We state now the main result for central multi-arrangements $\Aa$. For non-central multi-arrangements, see Theorem \ref{thmMain2}.


\begin{theorem}\label{thmMain}
Let $\Aa$ be a central hyperplane multi-arrangement in $\A^n$ and $m\in\N$. Then:

(i) The contact locus $\X_m(\Aa)$ admits a disjoint decomposition 
$$\X_{m}(\Aa)=\bigsqcup_{j\in S(m)} \mathscr{C}_{j}(\Aa)$$
where $S(m)=\{j\in\N^d\ |\ j_1s_1+\ldots+j_ds_d=m\}$.

(ii) If $\mathscr{C}_{j}(\Aa)\neq\emptyset$, then $\mathscr{C}_{j}(\Aa)$ is irreducible open and closed in the Zariski topology, and equals the complement of a hyperplane arrangement $\Aa_{j}$ of same rank as $\Aa$ in a linear space $X_j$.

(iii) Define $$T(m)=\{j \in S(m)\mid \mathscr{C}_{j}(\Aa)\neq\emptyset\}.$$ 

For $m=0$,  $X_j=X$, $\Aa_j=\Aa$ for $j=(0,\ldots,0)$, so that $\X_0(\Aa)=\mathscr{C}_0(\Aa)=X\setminus A$.

For $m>0$, there is a 1-1 correspondence between $T(m)$ and the set of chains of elements of the intersection lattice of $\Aa$ 
$$
Z_0\subset Z_1\subset \ldots \subset Z_m=\bA^n
$$
such that $\sum_{k=0}^m s(Z_k)=m$. The correspondence is
$$
j\mapsto Z_\bullet \text{ with } Z_k = \bigcap_{\{i\mid j_i>k \}} H_i\text{ for }0\le k\le m,
$$
and, conversely,
$$
Z_\bullet \mapsto j\text{ with } j_i=\#\{k\mid H_i\supset Z_k\} \text{ for }1\le i\le d.
$$
Under this correspondence,
$$
X_j = Z_0\times\ldots\times Z_m
$$
and the hyperplane arrangement $\Aa_j$ in $X_j$
is the product arrangement
$$
\Aa_j = \Aa_{j,0}\times\ldots\times \Aa_{j,m} $$
where $\Aa_{j,k}$ is the restriction to  $Z_k$ 
of  
$$
\mathop{\bigcup_{H_i\supset Z_{k-1}}}_{\text{and }H_i\not\supset Z_k} H_i
$$
for $k\ge 0$, with  $Z_{-1}=\emptyset$.
In particular,
$$
\mathscr{C}_j(\Aa) = X_j\setminus\Aa_j= \times_{k=0}^m (Z_k\setminus \Aa_{j,k}).
$$

(iv) The combinatorial type of $\Aa$ determines $T(m)$ and the combinatorial type of each $\Aa_j$. The dimension function on the intersection lattice of $\Aa$ determines further the dimension function on the intersection lattice of each $\Aa_j$.
\end{theorem}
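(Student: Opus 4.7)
The strategy is to parametrize an $m$-jet by its coefficients $\gamma(t)=\gamma_0+\gamma_1t+\dots+\gamma_mt^m$ with $\gamma_k\in\A^n$ and to unpack the contact condition $\ord_t(f\circ\gamma)=m$ one coefficient at a time. By multiplicativity of the $t$-adic valuation one has $\ord_t(f\circ\gamma)=\sum_is_i\,\ord_t(h_i\circ\gamma)$, so if we set
$$\mathscr{C}_j(\Aa)=\{\gamma\in\LL_m(X)\mid \ord_t(h_i\circ\gamma)=j_i\text{ for all }i\},$$
then $\X_m(\Aa)=\bigsqcup_{j\in S(m)}\mathscr{C}_j(\Aa)$ is immediate, giving part (i). Each $\mathscr{C}_j$ is locally closed in $\LL_m(X)$, and in fact clopen in $\X_m(\Aa)$: on $\X_m(\Aa)$ the open condition $\ord_t(h_i\circ\gamma)\le j_i$ for all $i$ together with $\sum_is_i\ord_t(h_i\circ\gamma)=m$ forces equality, since $\sum_is_i(j_i-\ord_t(h_i\circ\gamma))=0$ with nonnegative integer summands and positive weights; the analogous argument with $\ge j_i$ shows $\mathscr{C}_j$ is also closed in $\X_m(\Aa)$.

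Fix $j\in S(m)$. Since $h_i$ is linear (central case), $h_i\circ\gamma=\sum_kh_i(\gamma_k)t^k$, and the condition $\ord_t(h_i\circ\gamma)=j_i$ translates into $h_i(\gamma_k)=0$ for $k<j_i$ together with $h_i(\gamma_{j_i})\neq 0$. Reorganizing the conditions by $k$ yields $\gamma_k\in Z_k:=\bigcap_{i:j_i>k}H_i$ and $h_i(\gamma_k)\neq 0$ for every $i$ with $j_i=k$. Hence
$$\mathscr{C}_j(\Aa)=\prod_{k=0}^m\Bigl(Z_k\setminus\bigcup_{i:j_i=k}H_i\Bigr),$$
realizing $\mathscr{C}_j$ as the complement of a product hyperplane arrangement in the linear space $X_j=\prod_kZ_k$. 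The factor at index $k$ is empty precisely when some $H_i$ with $j_i=k$ contains $Z_k$; using that $\{k:H_i\supset Z_k\}$ is a down-set in $\{0,\ldots,m\}$ (since $Z_{k-1}\subset Z_k$), global nonemptiness is equivalent to the identity $j_i=\#\{k\mid H_i\supset Z_k\}$, which sets up the bijection between $T(m)$ and chains $Z_0\subset\dots\subset Z_m=\A^n$ in $L(\Aa)$. The constraint $\sum_ks(Z_k)=m$ follows from $\sum_is_ij_i=m$ by swapping the order of summation. The description $\Aa_{j,k}=\{H_i\cap Z_k\mid H_i\supset Z_{k-1},\,H_i\not\supset Z_k\}$ then drops out, and $\mathscr{C}_j$ is irreducible because each factor is the complement of a hyperplane arrangement in an irreducible linear space.

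For the rank equality $\text{rank}(\Aa_j)=\text{rank}(\Aa)$, I would compute the minimal edge of $\Aa_{j,k}$ in $Z_k$: splitting the product $\bigcap_{H_i\supset Z_{k-1}}H_i=Z_{k-1}$ according to whether $H_i\supset Z_k$ or not shows that this minimal edge equals $Z_{k-1}$ for $k\ge 1$, and equals the minimal edge $V:=\bigcap_iH_i$ of $\Aa$ for $k=0$. A telescoping sum of codimensions then gives $\text{rank}(\Aa_j)=(\dim Z_0-\dim V)+\sum_{k\ge 1}(\dim Z_k-\dim Z_{k-1})=n-\dim V=\text{rank}(\Aa)$. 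Part (iv) follows at once: the set $T(m)$ and each arrangement $\Aa_{j,k}$ are described purely in terms of $(L(\Aa),s)$, so the combinatorial type of $\Aa_j$ depends only on that of $\Aa$; adding the dimension function on $L(\Aa)$ recovers $\dim Z_k$ and hence $\dim X_j$. The main technical nuisance I anticipate is the bookkeeping around consecutive equalities $Z_{k-1}=Z_k$ in the chain (which correspond to empty $\Aa_{j,k}$), and verifying that the bijection with chains remains clean in those degenerate cases; these are routine but require attention to edge cases rather than a new idea.
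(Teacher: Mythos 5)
Your proposal is correct and follows essentially the same route as the paper: decompose by the vector of orders $j=(\ord_t(h_i\circ\gamma))_i$, observe the conditions separate by jet coefficient $\gamma_k$ to give the product $\times_k(Z_k\setminus\bigcup_{j_i=k}H_i)$, characterize nonemptiness via irreducibility of $Z_k$ (so emptiness means $Z_k\subset H_i$ for some $i$ with $j_i=k$), and prove the rank equality by a telescoping codimension count. The paper phrases the clopen argument and the nonemptiness criterion slightly differently (via formal derivatives $H_i^{(k)}$ and "complete sets" of hyperplanes), but these are cosmetic variations of your steps, and your flagged edge cases (repeated $Z_{k-1}=Z_k$, the inverse of the chain bijection) are handled in the paper by the same routine checks you anticipate.
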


Theorem \ref{thmMain} is a refinement for this particular case of decomposition formulas of \cite[Theorem A]{EiLaMu04} and \cite{Mus06}, where the non-empty terms are not explicitly determined, nor an explicit determination in terms of associated hyperplane arrangements is given, see Proposition \ref{propCompELM}.

Denote by
$$
b_k(\X_m(\Aa)):=\textup{rank}\, H_k(\X_m(\Aa),\Z)
$$
the $k$-th Betti number of $\X_m(\Aa)$, and by
$$
B(\X_m(\Aa),t) := \sum_k b_k(\X_m(\Aa))t^k
$$
the Betti polynomial. Then Theorem \ref{thmMain} (ii), and its generalization to the non-central case, Theorem \ref{thmMain2} (ii), together with the Orlik-Solomon theorem \cite[Corollary 3.6]{Dimca17} imply:

\begin{corollary} Let $\Aa$ be a hyperplane multi-arrangement and $m\in\N$.
The degree of the Betti polynomial of the contact locus $\X_m(\mathscr{A})$ coincides with the rank of $\Aa$. 
\end{corollary}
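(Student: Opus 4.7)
The plan is to reduce the computation of the Betti polynomial of $\X_m(\Aa)$ to the Betti polynomials of its pieces $\mathscr{C}_j(\Aa)$, and then invoke the Orlik--Solomon theorem.

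First I would use Theorem \ref{thmMain} (i)--(ii) in the central case (and Theorem \ref{thmMain2} in the non-central case, which the excerpt references). The contact locus decomposes as a disjoint union
\[
\X_m(\Aa) = \bigsqcup_{j \in T(m)} \mathscr{C}_j(\Aa),
\]
where each $\mathscr{C}_j(\Aa)$ is open and closed in the Zariski topology, hence also in the classical topology. Thus these pieces are unions of connected components in the analytic topology, and singular cohomology is additive across the disjoint union, giving
\[
B(\X_m(\Aa),t) = \sum_{j \in T(m)} B(\mathscr{C}_j(\Aa),t).
\]

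Next I would identify each summand using (ii): the piece $\mathscr{C}_j(\Aa)$ is the complement $X_j \setminus \Aa_j$ of a hyperplane arrangement $\Aa_j$ in a linear space $X_j$, and crucially $\rank \Aa_j = \rank \Aa$. By the Orlik--Solomon theorem \cite[Corollary 3.6]{Dimca17}, the Betti polynomial of the complement of a hyperplane arrangement is combinatorially determined, and its degree equals the rank of the arrangement with a strictly positive leading coefficient (the top cohomology of the complement of an essential arrangement is nonzero, and more generally only degrees $\le \rank$ contribute). Hence each summand $B(\mathscr{C}_j(\Aa),t)$ has degree exactly $\rank \Aa$, with positive leading coefficient.

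Finally, since Betti numbers are non-negative and each of the finitely many non-empty summands contributes a strictly positive coefficient in degree $\rank \Aa$ (and nothing in higher degrees), the sum $B(\X_m(\Aa),t)$ has degree exactly $\rank \Aa$. There is no real obstacle here beyond carefully citing the decomposition and the Orlik--Solomon vanishing/nonvanishing; the only point requiring a moment of care is ensuring that the pieces' top-degree contributions cannot cancel, which follows from positivity of Betti numbers.
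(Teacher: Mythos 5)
Your proof is correct and follows essentially the same route as the paper, which derives the corollary directly from the open--closed decomposition of Theorem \ref{thmMain2} (ii) (each piece being the complement of an arrangement $\Aa_j$ of the same rank as $\Aa$) combined with the Orlik--Solomon theorem. The only detail you spell out that the paper leaves implicit is the non-cancellation of top-degree contributions via positivity of Betti numbers, which is a harmless and correct elaboration.
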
 
 
 Furthermore, based on the decomposition from the main theorem we can compute explicitly the cohomology of the contact loci of $\Aa$ in terms of the combinatorial type of $\Aa$, see Proposition \ref{cohexp}. A direct consequence is:

\begin{theorem}\label{thmIsom}
Let $\mathscr{A}$ and $\mathscr{B}$ be two hyperplane multi-arrangements in ambient affine spaces of possibly different dimensions, not necessarily central. Let $R$ be a unitary commutative ring. If $\mathscr{A}$ and $\mathscr{B}$ are combinatorially equivalent, then the cohomology algebras of their $m$-contact loci $H^*(\X_m(\mathscr{A}),R)$ and $H^*(\X_m(\mathscr{B}),R)$ are isomorphic as graded $R$-algebras for every $m\in\N$.
\end{theorem}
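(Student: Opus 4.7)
My plan is to combine the decomposition in Theorem \ref{thmMain} (and its non-central counterpart Theorem \ref{thmMain2}) with the Orlik-Solomon description of the cohomology rings of hyperplane complements. First I would invoke the disjoint decomposition
$$\X_m(\mathscr{A}) = \bigsqcup_{j\in T(m)} \mathscr{C}_j(\mathscr{A})$$
into finitely many pieces that are clopen in the Zariski, hence classical, topology. Because $T(m)$ is finite and the pieces are clopen, the cohomology ring splits as a finite direct product of graded $R$-algebras
$$H^*(\X_m(\mathscr{A}),R) \;\cong\; \prod_{j\in T(m)} H^*(\mathscr{C}_j(\mathscr{A}),R),$$
and analogously for $\mathscr{B}$ with an index set $T'(m)$.

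Next, I would use Theorem \ref{thmMain}(iii) to write each $\mathscr{C}_j(\mathscr{A}) = \prod_{k=0}^{m}(Z_k\setminus\mathscr{A}_{j,k})$ as a product of arrangement complements in linear spaces. The K\"unneth formula — which applies over an arbitrary commutative ring $R$ since the integral cohomology of a hyperplane complement is free — yields
$$H^*(\mathscr{C}_j(\mathscr{A}),R) \;\cong\; \bigotimes_{k=0}^{m} H^*(Z_k\setminus\mathscr{A}_{j,k},R)$$
as graded $R$-algebras. By the Orlik-Solomon theorem each factor is isomorphic to the Orlik-Solomon $R$-algebra $A(\mathscr{A}_{j,k})\otimes_{\Z} R$, a combinatorial invariant of $\mathscr{A}_{j,k}$.

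The concluding step is to feed in Theorem \ref{thmMain}(iv): a combinatorial equivalence between $\mathscr{A}$ and $\mathscr{B}$ induces a bijection $T(m)\leftrightarrow T'(m)$, $j\mapsto j'$, such that $\mathscr{A}_j$ and $\mathscr{B}_{j'}$ are combinatorially equivalent with matching dimension functions on their intersection lattices. The Orlik-Solomon isomorphisms therefore assemble, factor by factor and then component by component, into the desired graded $R$-algebra isomorphism $H^*(\X_m(\mathscr{A}),R) \cong H^*(\X_m(\mathscr{B}),R)$.

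The main technical point to verify is that the clopen decomposition indeed transports the cup product to a direct-product ring decomposition and that K\"unneth works in this generality; both are standard but deserve care since $R$ is arbitrary. There is no genuine obstacle: the geometric content is entirely packaged in Theorem \ref{thmMain}, the combinatorial identification is supplied by Orlik-Solomon, and the proof is an assembly of these two inputs.
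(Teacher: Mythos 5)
Your proposal is correct and follows essentially the same route as the paper: the clopen decomposition of $\X_m$ from Theorem \ref{thmMain} (resp.\ \ref{thmMain2}), the combinatorial determination of $T(m)$ and of the arrangements $\Aa_j$ in part (iv), and the Orlik--Solomon theorem applied to each complement. The paper packages the Künneth step by applying Orlik--Solomon directly to the product arrangement $\Aa_j$ (made explicit in Proposition \ref{cohexp}), but this is only a cosmetic difference from your factor-by-factor argument.
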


We compute the Betti numbers of contact loci for generic and, respectively, generic central hyperplane arrangements. Throughout the article we let $\binom{a}{b}$ be the usual binomial coefficient if $0\le b\le a$ are integers, and otherwise we define it to be zero. A sum $\sum_{i=b}^az_i$ will be considered zero if the integers $a$ and $b$ do not satisfy that $b\le a$. 

\begin{theorem}\label{betgenarr}
Let $\Aa$ be a generic arrangement in $\A^n$ of $d\ge 1$ hyperplanes. Let $m\in \N^*$. Then $$b_k(\X_m(\Aa))=\begin{pmatrix}d\\k\end{pmatrix}\sum_{i=0}^{n-k} \begin{pmatrix}d-k\\i\end{pmatrix}\begin{pmatrix}m+k-1\\m-i\end{pmatrix}.$$
In particular, the number of irreducible components of $\X_{m}(\Aa)$ is $$b_0(\X_m(\Aa))=\sum_{i=0}^{n} \begin{pmatrix}d\\i\end{pmatrix}\begin{pmatrix}m-1\\m-i\end{pmatrix}.$$
\end{theorem}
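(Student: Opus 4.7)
The plan is to combine the product decomposition of Theorem~\ref{thmMain2} with the standard Poincar\'e polynomial of the complement of a generic arrangement. I first apply Theorem~\ref{thmMain2} to decompose
\[
\X_m(\Aa) = \bigsqcup_{j \in T(m)} \mathscr{C}_j(\Aa),
\]
where each piece $\mathscr{C}_j(\Aa)$ is a product $\prod_{k=0}^m (Z_k \setminus \Aa_{j,k})$ associated to a chain $Z_0 \subseteq Z_1 \subseteq \cdots \subseteq Z_m = \A^n$. Writing $I_k = \{\,i \mid H_i \supset Z_k\,\}$ and setting $a := |I_0| = |\supp(j)|$, genericity forces $a \le n$ and $\dim Z_k = n-|I_k|$.

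I then identify each factor using the genericity assumption. For $k \ge 1$, the arrangement $\Aa_{j,k}$ consists of $r_k := |I_{k-1}|-|I_k|$ hyperplanes in $Z_k$ whose common intersection is exactly $Z_{k-1}$, so after a linear change of coordinates the complement is diffeomorphic to $(\C^*)^{r_k} \times \C^{n-|I_k|-r_k}$, with Poincar\'e polynomial $(1+t)^{r_k}$. For $k = 0$, the arrangement $\Aa_{j,0}$ is a generic arrangement of $d-a$ hyperplanes in $Z_0 \cong \A^{n-a}$, whose Poincar\'e polynomial is the truncated binomial $\sum_{q=0}^{n-a} \binom{d-a}{q} t^q$; this is classical, proved quickly by the deletion-restriction recursion $\pi(d,n;t) = \pi(d-1,n;t) + t\,\pi(d-1,n-1;t)$. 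Applying K\"unneth and noting that $\sum_{k=1}^m r_k = |I_0|-|I_m| = a$, I obtain
\[
P_j(t) = (1+t)^a \sum_{q=0}^{n-a} \binom{d-a}{q} t^q,
\]
which depends only on the support size $a$.

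Finally, I count the number of $j \in T(m)$ with $|\supp(j)| = a$: choosing the $a$ positions with nonzero entries gives $\binom{d}{a}$ possibilities, and the compositions of $m$ into $a$ positive parts contribute $\binom{m-1}{a-1}$, for a total of $\binom{d}{a}\binom{m-1}{a-1}$. Summing yields
\[
B(\X_m(\Aa),t) = \sum_{a=1}^{\min(d,n,m)} \binom{d}{a}\binom{m-1}{a-1} (1+t)^a \sum_{q=0}^{n-a} \binom{d-a}{q} t^q.
\]
The principal obstacle, and the most delicate step, is to extract the coefficient of $t^k$ and recognize the closed form in the statement. My plan is to expand $(1+t)^a = \sum_p \binom{a}{p} t^p$, apply the absorption identity $\binom{d}{a}\binom{a}{p} = \binom{d}{p}\binom{d-p}{a-p}$, substitute $b = a-p$, rewrite $\binom{d-p}{b}\binom{d-p-b}{k-p} = \binom{d-p}{k-p}\binom{d-k}{b}$ to separate the $p$- and $b$-dependence, and finally invoke the Vandermonde convolution $\sum_p \binom{k}{p}\binom{m-1}{m-b-p} = \binom{m+k-1}{m-b}$. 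Careful bookkeeping shows that the truncation $q \le n-a$ becomes precisely the upper bound $i \le n-k$ in the final sum, yielding the stated formula; the number of irreducible components is the $k=0$ specialization.
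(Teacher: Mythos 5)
Your argument is correct and essentially the paper's own proof: both decompose $\X_m(\Aa)$ via Theorem \ref{thmMain2}, compute each component's Betti polynomial as $(1+t)^a\sum_{q=0}^{n-a}\binom{d-a}{q}t^q$ using genericity, count the components with $|\supp(j)|=a$ by $\binom{d}{a}\binom{m-1}{a-1}$, and then simplify with equivalent binomial manipulations (your absorption-plus-Vandermonde route, which I checked does yield the stated closed form, versus the paper's index reflection and trinomial revision). The only cosmetic caveat is that for a non-central generic arrangement the factors with $k\ge 1$ live in $Z_k^{center}$ (with common intersection $Z_{k-1}^{center}$) rather than $Z_k$, which affects nothing in the Betti-number computation.
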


\begin{theorem}\label{thmGCb} Let $\Aa$ be a generic central arrangement in $\A^n$ of $d>n$ hyperplanes. Let $m\in \N^*$. Then for $0\leq k\leq n$,
\begin{align*}
b_k(\X_m(\Aa)) & =  \sum_{j=0}^{\lceil\frac{m}{d}\rceil} \left[\begin{pmatrix}d\\k\end{pmatrix}\sum_{i=0}^{n-1-k}\begin{pmatrix}d-k\\i\end{pmatrix}\begin{pmatrix}m-jd+k-1\\i+k-1\end{pmatrix}+\right.\\
& \left. +\sum_{i=1}^{n-1}\begin{pmatrix}d\\i\end{pmatrix}\begin{pmatrix}m-jd-1\\i-1\end{pmatrix}\begin{pmatrix}d-i-1\\d-n\end{pmatrix}\begin{pmatrix}i\\n-k\end{pmatrix}\right]+\delta_{k,0}\epsilon_{d,m}- \binom{d-1}{n}\delta_{k,n}\epsilon_{d,m}
\end{align*}
where $\delta_{a,b}$ denotes the Kronecker delta for every $a,b\geq 0$, and $\epsilon_{d,m}$ is 1 if $d$ divides $m$ and 0 otherwise. 

In particular, the number of irreducible components of $\X_{m}(\Aa)$  is 
$$b_0(\X_m(\Aa))=\sum_{j=0}^{\lceil \frac{m}{d}\rceil}\sum_{i=1}^{n-1}\begin{pmatrix}d\\i\end{pmatrix} \begin{pmatrix}m-j d-1\\i-1\end{pmatrix}+\epsilon_{d,m}.$$
\end{theorem}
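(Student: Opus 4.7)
The plan is to exploit the chain decomposition of Theorem~\ref{thmMain} and evaluate the resulting factors by Orlik--Solomon. From that theorem,
$\X_m(\Aa)=\bigsqcup_{j\in T(m)}\mathscr{C}_j(\Aa)$
is a disjoint union of open-and-closed components with $\mathscr{C}_j(\Aa)=\prod_{k=0}^m(Z_k\setminus\Aa_{j,k})$, so by K\"unneth and additivity the Poincar\'e polynomial of $\X_m(\Aa)$ is the sum over admissible chains $Z_0\subseteq\cdots\subseteq Z_m=\A^n$ in $L(\Aa)$ (with $\sum_k s(Z_k)=m$) of the products of Poincar\'e polynomials of the factors. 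The task reduces to enumerating such chains and identifying each $Z_k\setminus\Aa_{j,k}$.

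For a generic central $\Aa$ with $d>n$ hyperplanes in $\A^n$, the lattice $L(\Aa)$ has $\A^n$, exactly $\binom{d}{i}$ codimension-$i$ edges $\bigcap_{\ell\in I}H_\ell$ for $1\le i\le n-1$, and the top element $\{0\}$ with $s(\{0\})=d$. Each chain thus begins with $j\in\{0,1,\ldots,\lfloor m/d\rfloor\}$ copies of $\{0\}$ followed by a chain in $L(\Aa)\setminus\{\{0\}\}$; the paper's binomial convention makes the $j=\lceil m/d\rceil$ term vanish when $d\nmid m$. Applying Theorem~\ref{thmMain}(iii), constant steps contribute a contractible factor, strict steps between non-origin edges of codimensions $i_1>i_2$ give Poincar\'e polynomial $(1+t)^{i_1-i_2}$ (the complement is homotopy equivalent to $(\C^*)^{i_1-i_2}\times\A^{\dim Z_{k-1}}$), and the initial step from $\{0\}$ to a codimension-$i$ edge produces a generic central arrangement of $d-i$ hyperplanes in $\A^{n-i}$, whose Poincar\'e polynomial by Orlik--Solomon is
\[
q_i(t)=\sum_{\ell=0}^{n-i-1}\binom{d-i}{\ell}t^\ell+\binom{d-i-1}{n-i-1}t^{n-i}.
\]
Thus a chain whose first non-origin element has codimension $i\ge 1$ has total Poincar\'e polynomial $q_i(t)\,(1+t)^i$, while the unique chain with $i=0$, the degenerate $\{0\}^{m/d}\subseteq\A^n\subseteq\cdots\subseteq\A^n$ (existing only if $d\mid m$), contributes $q_0(t)=P(\A^n\setminus\Aa,t)$.

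To finish, I will count chains with fixed $i\ge 1$ and $j$: after choosing $Y_0$ among the $\binom{d}{i}$ codimension-$i$ edges, the continuation is parametrized by a weakly decreasing sequence of subsets of $I_0$ whose sizes sum to $m-jd-i$, bijecting with tuples $(t_\ell)_{\ell\in I_0}$ of nonnegative integers summing to $m-jd-i$ (the upper bound $m-j-1$ is inconsequential under the paper's binomial convention), yielding $\binom{m-jd-1}{i-1}$ chains. Extracting $[t^k]$ in $\binom{d}{i}\binom{m-jd-1}{i-1}\,q_i(t)(1+t)^i$, the multinomial reorganization
\[
\binom{d}{i}\binom{d-i}{\ell}\binom{i}{k-\ell}=\binom{d}{k}\binom{k}{\ell}\binom{d-k}{i-k+\ell}
\]
moves the lower-degree contributions of $q_i$ into Term~A with its $\binom{d}{k}$ prefactor, while the top contribution of $q_i$, via $\binom{i}{k-(n-i)}=\binom{i}{n-k}$, lands in Term~B with the factor $\binom{d-i-1}{n-i-1}=\binom{d-i-1}{d-n}$. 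Finally, the degenerate chain contributes $P(\A^n\setminus\Aa,t)$, and the correction terms $\delta_{k,0}\epsilon_{d,m}-\binom{d-1}{n}\delta_{k,n}\epsilon_{d,m}$ reconcile this with the value the main sum assigns to the $i=0$ slot. The hard part will be executing this combinatorial reshuffling cleanly and verifying the precise form of the $\epsilon_{d,m}$ compensations in the degenerate case.
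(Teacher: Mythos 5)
Your plan is essentially the paper's own route, only organized differently: the paper strips off the components whose chain begins at the origin by an induction (Lemma \ref{progencen}(1), playing the role of your parameter $j$), identifies the remaining factors exactly as you do (your per-chain polynomial $q_i(t)(1+t)^i$ is its $\bigl(\sum_{\ell=0}^{n-1-i}\binom{d-1-i}{\ell}t^\ell\bigr)(1+t)^{i+1}$ from Lemma \ref{lemgencen}, and your chain count $\binom{d}{i}\binom{m-jd-1}{i-1}$ is its count of $j$-vectors with $|I_0(j)|=i$), and then quotes Theorem \ref{betgenarr} for the binomial reshuffling that you propose to redo by hand; your trinomial identity is the same revision used there. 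All the intermediate claims you make (the intersection lattice of a generic central arrangement, the $(1+t)^{i_1-i_2}$ factors and the telescoping, the count of continuations, the identification $q_i$ via Orlik--Solomon) are correct.

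The step you defer, ``verifying the precise form of the $\epsilon_{d,m}$ compensations,'' is where you will be unable to land on the literal statement. Under the paper's conventions (empty sums vanish, $\binom{a}{b}=0$ unless $0\le b\le a$), the extra $j=\lceil m/d\rceil$ term of the displayed double sum equals $\binom{d}{k}$ for $1\le k\le n-1$ and $0$ for $k=0$ and $k=n$ (at $k=n$ the inner sum $\sum_{i=0}^{n-1-k}$ is empty), while the degenerate chain contributes $b_k(M(\Aa))$, namely $\binom{d}{k}$ for $k\le n-1$ and $\binom{d-1}{n-1}$ for $k=n$. Hence the compensations needed are $+\delta_{k,0}\epsilon_{d,m}$ and $+\binom{d-1}{n-1}\delta_{k,n}\epsilon_{d,m}$; the printed $-\binom{d-1}{n}\delta_{k,n}\epsilon_{d,m}$ would be right only if the $j=\lceil m/d\rceil$ term also produced $\binom{d}{n}$ at $k=n$, which the empty-sum convention rules out. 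Concretely, for $n=2$, $d=3$, $m=3$ the contact locus has three components with Betti polynomial $(1+t)^2$ and one isomorphic to $M(\Aa)\times\C^4$ with Betti polynomial $1+3t+2t^2$, so $b_2=5$, whereas the displayed formula yields $3-\binom{2}{2}=2$; your raw derivation (and the paper's own proof, which simply adds $b_k(M(\Aa))$ when $d\mid m$) gives $5$. So finish by deriving the corrected $k=n$ compensation and recording the discrepancy with the stated formula, rather than trying to force a reconciliation that does not hold.
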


In addition to the $m$-contact locus $\X_m(\Aa)$, we also consider the {\it restricted $m$-contact locus} $\XX_m(f)$ of a defining polynomial, consisting of $m$-jets with order of contact $m$ with $f$ and angular component precisely 1, see Definition \ref{defCL}. The geometry of the restricted contact loci is richer than that of the contact loci, and they give rise to the non-naive version of the motivic zeta function
$$Z(f,T)=\sum_{m\geq 0}[\XX_m(f)]\L^{-nm}T^m$$
defined  over  $\cM^{\hat{\mu}}=\varinjlim_k K_0^{\mu_k}(Var_\bC)[\L^{-1}]$, where $\mu_k$ is the group of $k$-th roots of unity and  $K_0(Var_\bC)$ is the $\mu_k$-equivariant Grothendieck ring of complex varieties, see \cite{DL}. The cohomology groups with compact supports of the restricted $m$-contact loci are conjecturally related to the Floer cohomology groups of the $m$-th iteration of the monodromy on the Milnor fiber, see \cite{Floer}.

If the multi-arrangement is central, then $\XX_m(f)$ does not depend on the choice of a defining polynomial for the multi-arrangement $\Aa$, in which case we set $\XX_m(\Aa)=\XX_m(f)$. For simplicity we state here our result for the central case only; for the general case see Theorem \ref{prorescon}.

\begin{theorem}\label{prorescon2} Let $\Aa$ be as in Theorem \ref{thmMain}, and $m\in\N$. There is a natural multi-arrangement structure on each hyperplane arrangement $\Aa_j$, $j\in T(m)$, from Theorem \ref{thmMain}, such that there is a disjoint decomposition into Zariski open and closed irreducible subsets of the restricted contact locus  
$$\XX_{m}(\Aa)=\bigsqcup_{j\in T(m)}\mathscr{F}_j(\Aa)$$
where $\mathscr{F}_j(\Aa)$ is the Milnor fiber of $\Aa_j$. 

In particular, the degree of the Betti polynomial of $\XX_{m}(\Aa)$ is the rank of $\Aa$ minus one.
\end{theorem}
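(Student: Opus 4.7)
The plan is to build on the decomposition from Theorem \ref{thmMain} by intersecting each cell $\mathscr{C}_j(\Aa)$ with the angular-component locus $\{\ac(f)=1\}$, and to identify the resulting piece with the Milnor fiber of a naturally defined multi-arrangement on $X_j = Z_0\times\cdots\times Z_m$. For $j \in T(m)$, set $\mathscr{F}_j(\Aa) := \mathscr{C}_j(\Aa) \cap \XX_m(\Aa)$. Because by Theorem \ref{thmMain}(ii) the cells $\mathscr{C}_j(\Aa)$ are pairwise disjoint and each is open and closed in $\X_m(\Aa)$, and because $\XX_m(\Aa)\subset\X_m(\Aa)$, the sets $\mathscr{F}_j(\Aa)$ are pairwise disjoint, open and closed in $\XX_m(\Aa)$, and cover it.

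The central computation is the angular component of $f$ along a cell. Since $\Aa$ is central, each $h_i$ is a linear form, so writing an $m$-jet as $\gamma(t)=\sum_{k=0}^m \gamma_k t^k$ gives $h_i(\gamma)(t)=\sum_k h_i(\gamma_k) t^k$. Under the parameterization of Theorem \ref{thmMain}(iii), the cell condition $\ord_t h_i(\gamma)=j_i$ translates precisely into $h_i(\gamma_k)=0$ for $k<j_i$ and $h_i(\gamma_{j_i})\neq 0$, whence $\ac(h_i(\gamma))=h_i(\gamma_{j_i})$ and therefore
$$\ac(f(\gamma)) = \prod_{i=1}^d h_i(\gamma_{j_i})^{s_i}.$$
Equipping the hyperplane $H_i\cap Z_{j_i}$, viewed as an element of $\Aa_{j,j_i}$, with multiplicity $s_i$, the defining polynomial of the resulting multi-arrangement $\Aa_j$ on $X_j$ is
$$g_j(\gamma_0,\ldots,\gamma_m) = \prod_{k=0}^m \prod_{\{i:\,j_i=k\}} h_i(\gamma_k)^{s_i},$$
which coincides with $\ac(f(\gamma))$ under the isomorphism $\mathscr{C}_j(\Aa)\cong X_j\setminus\Aa_j$. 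The equation $\ac(f(\gamma))=1$ thus becomes $g_j=1$, so $\mathscr{F}_j(\Aa)$ is by definition the Milnor fiber of $\Aa_j$; irreducibility amounts to connectedness of this Milnor fiber.

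For the concluding assertion on the Betti polynomial, Theorem \ref{thmMain}(ii) yields $\rank(\Aa_j)=r:=\rank(\Aa)$ for every $j\in T(m)$. The Milnor fiber of any rank-$r$ hyperplane multi-arrangement splits, up to homotopy, as the Milnor fiber of its essentialization times an affine space, and the essentialization is homotopy equivalent to a CW-complex of real dimension $r-1$ with nonvanishing top cohomology. Hence each $\mathscr{F}_j(\Aa)$ has Betti polynomial of degree exactly $r-1$, and summing over $j\in T(m)$ yields the claim. The main obstacle is pinpointing the correct multi-arrangement structure on $\Aa_j$; once the assignment of multiplicity $s_i$ to the image of $H_i$ in $\Aa_{j,j_i}$ is made, all identifications reduce to direct substitution into the parameterization of Theorem \ref{thmMain}(iii).
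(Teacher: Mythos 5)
Your proposal is correct and follows essentially the same route as the paper: you restrict the degree-$m$ coefficient of $\gamma(f)$ (the paper phrases this as the formal derivative identity $f^{(m)}|_{X_j}=m!f_j$ in Lemma \ref{lemMAA}) to each cell $\mathscr{C}_j(\Aa)$ of Theorem \ref{thmMain}, recognize it as a defining polynomial of the multi-arrangement $\Aa_j$ with the multiplicities $s_i$, and conclude that each piece of $\XX_m(\Aa)$ is a nonzero fiber, i.e.\ the Milnor fiber of $\Aa_j$, exactly as in Theorem \ref{prorescon}. The only differences are notational (jet coefficients and angular components versus the paper's divided-power derivatives), and, like the paper, you leave the connectedness of the Milnor fibers and the nonvanishing of their top Betti number to standard facts.
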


We show that for hyperplane arrangements, the spectral sequence of \cite{Floer} converging to the cohomology with compact supports of the restricted contact locus degenerates at the first page, see Proposition \ref{propFlDeg}. According to a conjecture from \cite{Floer}, the same spectral sequence should compute the Floer cohomology of the iterates of the Milnor monodromy.

We compute the Betti numbers of restricted contact loci of generic central hyperplane arrangements:

\begin{theorem}\label{thmRCGA} Let $\Aa$ be a generic central arrangement in $\A^n$ of $d>n$ hyperplanes. Let $m\in \N^*$. Then for $0\leq k\leq n-1$,
\begin{align*}
b_k(\XX_m(\Aa)) = &\sum_{j=0}^{\lceil \frac{m}{d}\rceil}\sum_{l=1}^{n-1}\begin{pmatrix}d\\l\end{pmatrix}\begin{pmatrix}m-jd-1\\l-1\end{pmatrix}\sum_{i=0}^{n-1-l}\begin{pmatrix}d-1-l\\i\end{pmatrix}\begin{pmatrix}l\\k-i\end{pmatrix}+\\
&+  \binom{d-1}{k} \epsilon_{d,m} +(d-1)\binom{d-2}{n-1}\delta_{k,n-1}\epsilon_{d,m}
\end{align*}
where $\delta_{k,n-1}$ and $\epsilon_{d,m}$ are as above. 

In particular, the number of irreducible components of $\XX_m(\Aa)$ is
$$
b_0(\XX_m(\Aa)) =\sum_{j=0}^{\lceil \frac{m}{d}\rceil}\sum_{l=1}^{n-1}\begin{pmatrix}d\\l\end{pmatrix}\begin{pmatrix}m-jd-1\\l-1\end{pmatrix}  + \epsilon_{d,m}.
$$

\end{theorem}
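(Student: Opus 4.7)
The strategy follows the one for Theorem~\ref{thmGCb} with Milnor fibers in place of complements. By Theorem~\ref{prorescon2} one has a disjoint decomposition $\XX_m(\Aa)=\bigsqcup_{j\in T(m)}\mathscr{F}_j(\Aa)$ into open-and-closed pieces, each the Milnor fiber of the product multi-arrangement $\Aa_j$ on $X_j=Z_0\times\cdots\times Z_m$ described in Theorem~\ref{thmMain}(iii). Thus $b_k(\XX_m(\Aa))=\sum_{j\in T(m)}b_k(\mathscr{F}_j(\Aa))$. For $\Aa$ generic central with $d>n$, the intersection lattice is rigid: every $r$-subset of hyperplanes with $r\le n-1$ meets in a codimension-$r$ linear subspace, and $\{0\}$ is the unique minimal edge with $s(\{0\})=d$. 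Hence a chain $Z_0\subseteq\cdots\subseteq Z_m=\A^n$ with $\sum_k s(Z_k)=m$ is parametrized by an integer $a\in\{0,1,\dots,\lfloor m/d\rfloor\}$ recording the number of positions at which $Z_k=\{0\}$, together with a reduced strict chain $W_1\subsetneq\cdots\subsetneq W_p=\A^n$ of non-origin edges with multiplicities $a_q\ge 1$ subject to $ad+\sum_q a_q\codim(W_q)=m$; the outer index $j$ in the stated formula corresponds to $a$.

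For such a chain, the origin block contributes empty factors $\Aa_{j,k}$ for $k<a$; the transition factor $\Aa_{j,a}$ is a generic central arrangement of $d-\codim(W_1)$ hyperplanes through the origin of $W_1\cong\C^{n-\codim(W_1)}$; and each later factor $\Aa_{j,k}$ is a generic central arrangement inside $W_q$, with center the immediately smaller edge $W_{q-1}$, of $\codim(W_{q-1})-\codim(W_q)$ hyperplanes. Solving the defining equation $g_0\cdots g_m=1$ for one of the factors exhibits $\mathscr{F}_j(\Aa)$ as a Milnor-fiber bundle over the complements of the remaining factors; the $\bC^*$-structure makes this bundle homotopically trivial, reducing its Betti numbers to a K\"unneth-type convolution of Betti numbers of classical complements (given by Orlik--Solomon) and of a Milnor fiber of a generic central arrangement (computed by Orlik--Randell or directly).

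Grouping the resulting multiple sum by the codimension $l=\codim(W_1)$ of the smallest non-origin edge yields the main term of the stated formula: $\binom{d}{l}$ counts the choices of hyperplanes defining $W_1$, $\binom{m-jd-1}{l-1}$ counts the compositions of $m-jd$ into $l$ positive codimension drops, and $\sum_i\binom{d-1-l}{i}\binom{l}{k-i}$ arises from the K\"unneth bookkeeping across the product factors. The two $\epsilon_{d,m}$-correction terms come from the extremal chain $Z_0=\cdots=Z_{m-1}=\{0\}$, $Z_m=\A^n$, which exists precisely when $d\mid m$: its piece $\mathscr{F}_j(\Aa)$ is $F_\Aa\times\A^{n(m-m/d)}$, and the classical Betti numbers $b_k(F_\Aa)=\binom{d-1}{k}$ for $0\le k<n-1$ and $b_{n-1}(F_\Aa)=\binom{d-1}{n-1}+(d-1)\binom{d-2}{n-1}$ match the stated correction exactly. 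The main obstacle is the reduction of the Milnor fiber of each product $\Aa_j$ to its K\"unneth form and the subsequent binomial-identity manipulation, parallel to but slightly more involved than the one in Theorem~\ref{thmGCb}.
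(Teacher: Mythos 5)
Your plan is correct and follows essentially the same route as the paper: the decomposition of $\XX_m(\Aa)$ into Milnor-fiber pieces (Theorem \ref{prorescon}), the enumeration of chains with the number of origin occurrences as the outer index $j$ (the paper's induction in Lemmas \ref{progencen} and \ref{propex3}), the reduction of each piece to a product of complements and a torus by solving the defining equation in one coordinate (the paper's Lemma \ref{lemmilfib}), and the Orlik--Randell correction $b_k(F(\Aa))$ for the pure-origin chain when $d\mid m$ (Theorem \ref{lembetmil}, equivalent to your stated form after Pascal's rule). The only point to state carefully is that the K\"unneth convolution must be obtained by solving for a coordinate in one of the Boolean-type later factors, so that the main term involves only complements, namely $B(M(\G^{n-l}_{d-l}),t)(1+t)^{l-1}$; the Milnor fibration over the complement of the transition factor $\G^{n-l}_{d-l}$ is not homotopically trivial, and $F(\G^{n-l}_{d-l})\times\prod M$ would give the wrong top coefficient, the Orlik--Randell fiber entering only through the extremal $\epsilon_{d,m}$ term.
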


By Aluffi \cite[Theorem 1.1]{Al}, the class in $K_0(Var_\bC)$ of the complement of a central hyperplane arrangement $\Aa$ is 
$
\chi_{\Aa}(\L),
$ 
where $\chi_{\Aa}$ is the characteristic polynomial of the arrangement. 
Hence Theorems \ref{thmMain} and \ref{prorescon2} imply the following for the (naive) motivic zeta functions: 

\begin{theorem}\label{thmZetas}
Let $\Aa$ be a central hyperplane multi-arrangement in $\bA^n$. Then 
$$Z^{\text{naive}}(\Aa,T)=\sum_{m\geq0}
\sum_{j\in T(m)}\chi_{\Aa_j}(\L)\L^{-nm}T^m \in \cM[[T]],$$
and
$$Z(\Aa,T)=\sum_{m\geq0}
\sum_{j\in T(m)}[\mathscr{F}_j(\Aa)]\L^{-nm}T^m  \in \cM^{\hat{\mu}}[[T]],$$
with $T(m)$ and $\Aa_j$ as in Theorem \ref{thmMain}. 

\end{theorem}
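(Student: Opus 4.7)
My plan is to derive both identities by substituting the decompositions from Theorem \ref{thmMain} and Theorem \ref{prorescon2} into the defining series of the two motivic zeta functions and computing the classes of the pieces in the respective Grothendieck rings.

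For the naive zeta function I would start from the definition $Z^{\text{naive}}(\Aa,T)=\sum_{m\ge 0}[\X_m(\Aa)]\L^{-nm}T^m$. By Theorem \ref{thmMain}(i)--(ii) the contact locus $\X_m(\Aa)$ decomposes as a disjoint union of Zariski open-and-closed subsets $\mathscr{C}_j(\Aa)$ indexed by $j\in T(m)$, so additivity in $K_0(Var_\C)$ gives $[\X_m(\Aa)]=\sum_{j\in T(m)}[\mathscr{C}_j(\Aa)]$. Theorem \ref{thmMain}(iii) identifies each $\mathscr{C}_j(\Aa)$ with the complement of the product arrangement $\Aa_j=\Aa_{j,0}\times\ldots\times\Aa_{j,m}$ in the linear space $X_j=Z_0\times\ldots\times Z_m$. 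Because $\Aa$ is central, each $Z_k$ is a linear subspace of $\A^n$ and each $\Aa_{j,k}$ is a central arrangement in $Z_k$, so Aluffi's theorem \cite[Theorem 1.1]{Al} applies to yield $[Z_k\setminus \Aa_{j,k}]=\chi_{\Aa_{j,k}}(\L)$. Multiplicativity of the characteristic polynomial on product arrangements then gives
$$[\mathscr{C}_j(\Aa)]=\prod_{k=0}^m \chi_{\Aa_{j,k}}(\L)=\chi_{\Aa_j}(\L),$$
and assembling this over $j\in T(m)$ and $m\ge 0$ produces the first formula.

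For the non-naive zeta function the argument is parallel, using Theorem \ref{prorescon2}: the restricted contact locus $\XX_m(\Aa)$ decomposes as a disjoint union of Zariski open-and-closed subsets $\mathscr{F}_j(\Aa)$, each of which is the Milnor fiber of $\Aa_j$ with its natural monodromy. Additivity in $\cM^{\hat\mu}$ gives $[\XX_m(\Aa)]=\sum_{j\in T(m)}[\mathscr{F}_j(\Aa)]$, and the formula follows.

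The proof encounters essentially no genuine obstacle once Theorems \ref{thmMain} and \ref{prorescon2} are in hand; it reduces to bookkeeping with additivity of classes, Aluffi's identity, and the multiplicativity of $\chi_{\Aa}$. The only point worth a line of explicit verification is that in the $\hat\mu$-equivariant setting the class $[\mathscr{F}_j(\Aa)]\in\cM^{\hat\mu}$ produced by the decomposition of $\XX_m(\Aa)$ really is the class of the Milnor fiber with its Milnor monodromy, rather than with some other induced $\mu_m$-action; but this compatibility is built into the identification furnished by Theorem \ref{prorescon2}.
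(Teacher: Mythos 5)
Your argument is exactly the paper's: the paper deduces the theorem directly from the decompositions of Theorem \ref{thmMain} and Theorem \ref{prorescon2} together with Aluffi's identity $[M(\Aa_j)]=\chi_{\Aa_j}(\L)$, just as you do (your extra step of factoring $\chi_{\Aa_j}$ over the product $\Aa_{j,0}\times\cdots\times\Aa_{j,m}$ is harmless bookkeeping). The proposal is correct and takes essentially the same approach as the paper.
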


\begin{rmk} We state without proof the following remark, for which we thank R. van der Veer. Namely, Theorem \ref{thmZetas} recovers in a non-tropical way \cite[Theorems 1.2 and 1.7]{KU}. Moreover, since the formula given for $Z^{\text{naive}}(\Aa,T)$ depends only on the combinatorial type of $\Aa$ and the ambient dimension $n$, one can use this formula with $q$, $\Z[q^{\pm 1}]$ replacing $[\L]$, $\cM$, respectively,  to obtain a well-defined motivic zeta function of a matroid. This coincides with the one defined by \cite[Definition 1.1]{JKU}, from which one also obtains the topological zeta function of a matroid of \cite{vV}. 
\end{rmk}

In Section \ref{secPre} we give preliminaries on jets, arcs, and contact loci. In Section \ref{seccont} we prove Theorem \ref{thmMain} and Theorem \ref{thmIsom}. In Section \ref{secRCL} we prove Theorem \ref{prorescon2}. In Section \ref{exgen} we prove  Theorem \ref{betgenarr}. In Section \ref{secGCA} we prove Theorems \ref{thmGCb} and \ref{thmRCGA}. In Section \ref{secLR} we relate our results with those of \cite{EiLaMu04} and \cite{Floer}.

\medskip
\noindent
{\it Acknowledgment.} We thank G. Denham, J. Fern\'andez de Bobadilla, H.D. Nguyen, Q.T. L\^e, R. van der Veer, and C. Voll for useful comments. We thank BCAM Bilbao, VIASM Hanoi, and MPI Bonn for the hospitality during the writing of this article. The first author was partly supported by the grant STRT/13/005 from KU Leuven, the Methusalem grant METH/15/026, and the grants G097819N and G0F4216N from FWO. The second author was supported by the Flanders-Vietnam bilateral grant G0F4216N from FWO.

\section{Contact and restricted contact loci}\label{secPre}
Let $X$ be a complex smooth algebraic variety of dimension $n$. One defines the arc space $\LL(X)$ of $X$ to be the scheme parametrizing  all morphisms $\Spec \C\llbracket t \rrbracket \rightarrow X$ over $\bC$, where $\C\llbracket t \rrbracket$ is the ring of formal power series with complex coefficients. For an integer $p \geq 0$, the space $\LL_p(X)$ of $p$-jets of $X$ is the variety parametrizing all morphisms $\Spec \C\llbracket t \rrbracket / (t^{p+1}) \rightarrow X$ over $\bC$. Then $\LL_p(X)$ is a smooth variety of dimension $n(p+1)$, and $\LL(X)$ is the projective limit of $\LL_p(X)$, which is well-defined since the truncation maps $\LL_p(X)\to \LL_{q}(X)$ for all $0\le q\le p$ are affine.

If $f\in \Gamma(X,\O(X))$ is a global regular function, then $\gamma\in \LL(X)$ (or $\LL_p(X)$) gives an element $\gamma(f)$ in $ \C\llbracket t \rrbracket$ (respectively, in  $\C\llbracket t \rrbracket/ (t^{p+1})$). 

\begin{defn}\label{defCL}
For  $p \geq m\in \N$,  the {\it contact loci} in $\LL(X)$, and respectively in $\LL_p(X)$, are:
$$\Cont^m(f):=\{\gamma\in \LL(X) \ |\ \ord_t\gamma(f)=m\},$$
$$\Cont^m(f)_p:=\{\gamma\in \LL_p(X) \ |\ \ord_t\gamma(f)=m\}.$$
The {\it restricted  contact loci} are:
\begin{align*}
&\Cont^{m,1}(f):=\{\gamma\in \LL(X) \ |\ \gamma(f)=t^m+\text{(higher order terms)}\},\\
&\Cont^{m,1}(f)_p:=\{\gamma\in \LL_p(X) \ |\ \gamma(f)=t^m+\text{(higher order terms)}\}.
\end{align*}
We will denote $$\X_m(f):=\Cont^m(f)_m,\quad \XX_m(f):=\Cont^{m,1}(f)_m.$$ 
\end{defn}

From now on, we let $X$ be the affine space $\A^n$ with $n\ge 1$.

An arc $\gamma\in \LL(\A^n)$ (respectively, a jet $\gamma\in \LL_p(\A^n)$) can be identified with its corresponding homomorphism of $\C$-algebras $\C[x_1,\ldots,x_n] \rightarrow \C\llbracket t \rrbracket$ (respectively, to $\C[t]/(t^{p+1})$), which is determined by the images of the variables $x_1,\ldots,x_n$. Let us write
$$\gamma(x_i)=\sum_{j=0}^{\infty} \dfrac{a_{ij}}{j!}t^j.$$
To the polynomial ring $\C[x_1,\ldots,x_n]$ we add new  variables $x_i^{(j)}$ for $j\geq 1$ to form the polynomial ring in infinitely many variables $$S_{\infty}=\C[x_i^{(j)}\ |\ i=1,\ldots,n,\ j\geq 0].$$ One can define a $\C$-derivation $$D:S_{\infty}\rightarrow S_{\infty},\quad \quad D(x_i^{(j)})=x_i^{(j+1)}.$$
Define a map $\C[x_1,\ldots,x_n] \rightarrow \C\llbracket t \rrbracket$ by sending 
$g\mapsto \sum_{j=0}^{\infty} \frac{g^{(j)}(a)}{j!}t^j$, for every $g\in\C[x_1,\ldots,x_n]$, where $g^{(j)}:=D^j(g)$ and $a=(a_{ij}\ |\ 1\leq i\leq n,\ j\geq 0)$ defines $\gamma$ as above. One can  see this is a homomorphism of $\C$-algebras and moreover, its value at the variable $x_i$ coincides with $\gamma(x_i)$. Thus we have
$$
\gamma(f)=\sum_{j=0}^{\infty} \frac{f^{(j)}(a)}{j!}t^j,
$$
and hence
\begin{align*}\Cont^m(f)=  \{a=&(a_{ij})_{1\leq i\leq n,j\geq0}\  |\ a_{ij}\in \C, \text{ and }  \\
 & f(a)=f'(a)=\ldots=f^{(m-1)}(a)=0\ ,\ f^{(m)}(a)\neq 0 \}
 \end{align*}
for $m\ge 0$. 
Using the same argument, one obtains an expression for the truncated version:

\begin{lemma}\label{contru}
\begin{align*}
\Cont^m(f)_p=\{a\in \A^{n\times (p+1)}\ |&\ f(a)=f'(a)=\ldots=f^{(m-1)}(a)=0\\
&\ \text{ and }\ f^{(m)}(a)\neq 0 \}.
\end{align*}
\end{lemma}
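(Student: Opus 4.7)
The plan is to show that Lemma \ref{contru} follows by essentially the same calculation that was just carried out for the arc space, with all formal power series replaced by truncations modulo $t^{p+1}$.

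First I would note that a $p$-jet $\gamma\in\LL_p(\A^n)$ is the same data as a $\C$-algebra homomorphism $\C[x_1,\ldots,x_n]\to\C[t]/(t^{p+1})$, and such a homomorphism is uniquely determined by specifying the images $\gamma(x_i)\in\C[t]/(t^{p+1})$. Writing these as $\gamma(x_i)=\sum_{j=0}^{p}\tfrac{a_{ij}}{j!}t^j$ identifies $\LL_p(\A^n)$ with $\A^{n\times(p+1)}$ via the coordinates $(a_{ij})_{1\le i\le n,\,0\le j\le p}$.

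Next I would verify that with $a=(a_{ij})$ defining $\gamma$, the identity
\[
\gamma(f)\equiv\sum_{j=0}^{p}\frac{f^{(j)}(a)}{j!}t^j\pmod{t^{p+1}}
\]
holds in $\C[t]/(t^{p+1})$. The argument is the same as the one given for arcs just before the lemma: by the universal property of the derivation $D$ on $S_\infty$, evaluation of $f^{(j)}=D^jf$ at $a$ only involves the variables $x_i^{(k)}$ for $k\le j\le p$, which are precisely the jet coordinates that are available. Hence the formula in the arc-space setting passes to the truncation without modification.

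With this formula in hand, the condition $\ord_t\gamma(f)=m$ in $\C[t]/(t^{p+1})$ (well-defined because $m\le p$) becomes the vanishing of the first $m$ coefficients and the non-vanishing of the $m$-th coefficient of the above sum, i.e.\ $f^{(i)}(a)=0$ for $0\le i\le m-1$ and $f^{(m)}(a)\ne 0$. This gives exactly the set described on the right-hand side of the lemma.

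The only point to be careful about is the consistency between the arc-level computation and the truncated version: one must check that $D^j(f)$ for $j\le p$ does not introduce jet coordinates $x_i^{(k)}$ with $k>p$, so that $f^{(j)}(a)$ is well-defined from the data of a $p$-jet alone. This is immediate from $D(x_i^{(k)})=x_i^{(k+1)}$, so there is no real obstacle; the lemma is essentially a direct transcription of the preceding computation, and the proof will be only a few lines.
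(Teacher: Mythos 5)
Your proposal is correct and follows exactly the route the paper takes: the paper's proof of this lemma is precisely the observation that the computation of $\gamma(f)$ via the derivation $D$ carried out for arcs passes verbatim to the truncation $\C[t]/(t^{p+1})$, since $D^j f$ only involves the coordinates $x_i^{(k)}$ with $k\le j\le p$. Your extra care about well-definedness of $f^{(j)}(a)$ from the $p$-jet data is the same (implicit) point, so there is nothing to add.
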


Thus we can rewrite the contact locus $\X_m(f)$ and the restricted contact locus $\XX_m(f)$ as:

\begin{lemma}\label{lemXoX}
$$\X_m(f)=\{a\in \A^{n\times (m+1)}\ |\ f(a)=f'(a)=\ldots=f^{(m-1)}(a)=0\ ,\ f^{(m)}(a)\neq 0 \},$$
$$\XX_m(f)=\{a\in \A^{n\times (m+1)}\ |\ f(a)=f'(a)=\ldots=f^{(m-1)}(a)=0\ ,\ f^{(m)}(a)=m!\}.$$
\end{lemma}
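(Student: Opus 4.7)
The plan is to derive both formulas directly from the coordinate description of jets developed just above the lemma, specialized to $p=m$.

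For the first assertion on $\X_m(f)$, I would simply invoke Lemma \ref{contru} with $p=m$: by definition $\X_m(f)=\Cont^m(f)_m$, and Lemma \ref{contru} already says that $\Cont^m(f)_p$ is cut out inside $\A^{n\times(p+1)}$ by the vanishing of $f(a),f'(a),\ldots,f^{(m-1)}(a)$ together with the non-vanishing of $f^{(m)}(a)$. Note that the $m$-jet only sees the truncation of $\gamma(f)$ modulo $t^{m+1}$, so only the values $f^{(j)}(a)$ for $j\le m$ are involved, which is consistent with $a\in\A^{n\times(m+1)}$.

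For the second assertion on $\XX_m(f)$, I would repeat the coordinate argument that led to Lemma \ref{contru}. An $m$-jet $\gamma\in\LL_m(\A^n)$ is encoded by $a=(a_{ij})_{1\le i\le n,\ 0\le j\le m}\in\A^{n\times(m+1)}$, and the preceding paragraph shows that under this identification
\[
\gamma(f)=\sum_{j=0}^{m}\frac{f^{(j)}(a)}{j!}\,t^{j}\ \ \mathrm{mod}\ t^{m+1}
\]
in $\C[t]/(t^{m+1})$. By Definition \ref{defCL}, $\gamma\in\XX_m(f)=\Cont^{m,1}(f)_m$ means exactly that $\gamma(f)\equiv t^{m}$ modulo $t^{m+1}$; comparing the two power series coefficient by coefficient translates this into the system
\[
\frac{f^{(j)}(a)}{j!}=0\ \text{for }0\le j<m,\quad \frac{f^{(m)}(a)}{m!}=1,
\]
which is equivalent to $f(a)=f'(a)=\cdots=f^{(m-1)}(a)=0$ and $f^{(m)}(a)=m!$.

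There is no real obstacle here: the content is purely bookkeeping, converting the condition "angular component equal to $1$" into one polynomial equation on the coefficients. The one small point that deserves to be checked is that the differential operator $D$ on $S_\infty$ used to produce the polynomials $f^{(j)}$ genuinely corresponds to differentiation in $t$ via $\gamma\mapsto \gamma(f)$, but this identity was already established in the discussion preceding Lemma \ref{contru}, so both parts of the lemma follow immediately.
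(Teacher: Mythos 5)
Your proposal is correct and follows the same route the paper intends: Lemma \ref{contru} with $p=m$ gives the description of $\X_m(f)$, and comparing coefficients of $\gamma(f)=\sum_{j=0}^m \frac{f^{(j)}(a)}{j!}t^j$ modulo $t^{m+1}$ with $t^m$ translates the angular-component condition into $f(a)=\cdots=f^{(m-1)}(a)=0$, $f^{(m)}(a)=m!$. The paper treats this as an immediate consequence of the preceding coordinate computation, which is exactly the bookkeeping you carried out.
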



\section{Contact loci of hyperplane arrangements}\label{seccont}

Throughout this section, $X=\A^n$ and $\Aa$ is a hyperplane multi-arrangement in $X$ given by a polynomial $f=h_1^{s_1}\ldots h_d^{s_d}$ in $\C[x_1,\ldots,x_n]$, where $n, d, s_i\ge 1$ are integers. Here $h_i$ are polynomials of degree 1, pairwise distinct up to multiplication by a constant, and their zero loci are denoted by $H_i$. 

We do not assume that $\Aa$ is central. If a hyperplane $H$ is given by a non-homogeneous equation $a_1x_1+\ldots +a_nx_n+b=0$, we denote by $H^{center}$ the hyperplane given by the homogeneous part $a_1x_1+\ldots +a_nx_n=0$, and denote by $\Aa^{center}$ the hyperplane arrangement consisting of $H^{center}$ for all $H\in\Aa$. Note that $\Aa^{center}$ has the structure of a hyperplane multi-arrangement by defining the multiplicity of a hyperplane $K\in\Aa^{center}$ to be $\sum_{H^{center}=K}s(H) $. In particular, if $\Aa$ is central, then $H_i^{center}=H_i$ for all $i$, and $\Aa=\Aa^{center}$.

A subset of hyperplanes $S\subset \Aa$ is said to be {\it complete} if the intersection $\cap S$ of all $H\in S$ is non-empty,  and for all $H\in \Aa$, if $H\supset \cap S$ then $H\in S$; the empty set is a complete set by convention. There is a 1-1 correspondence between the complete sets and the intersection lattice of $\Aa$. The multiplicity function $s$ of the multi-arrangement $\Aa$ extends to complete sets by setting $s(S)=\sum_{H\supset S}s(H)$ for a non-empty $S$, and $s(\emptyset)=0$.

Theorem \ref{thmMain} is the central case of the following:

\begin{theorem}\label{thmMain2}
Let $\Aa$ be a hyperplane multi-arrangement in $\A^n$ and $m\in\N$. Then:

(i) The contact locus $\X_m(\Aa)$ admits a disjoint decomposition 
$$\X_{m}(\Aa)=\bigsqcup_{j\in S(m)} \mathscr{C}_{j}(\Aa)$$
where $S(m)=\{j\in\N^d\ |\ j_1s_1+\ldots+j_ds_d=m\}$.

(ii) If $\mathscr{C}_{j}(\Aa)\neq\emptyset$, then $\mathscr{C}_{j}(\Aa)$ is irreducible open and closed in the Zariski topology, and equals the complement of a hyperplane arrangement $\Aa_{j}$ of same rank as $\Aa$ in an affine space $X_j$.

(iii) Define $$T(m)=\{j \in S(m)\mid \mathscr{C}_{j}(\Aa)\neq\emptyset\}.$$ 

For $m=0$, $X_j=X$, $\Aa_j=\Aa$ for $j=(0,\ldots,0)$, so that $\X_0(\Aa)=\mathscr{C}_0(\Aa)=X\setminus \Aa$.

For $m>0$, there is a 1-1 correspondence between $T(m)$ and the set of  descending chains of complete sets in $\Aa$
$$S_0\supset S_1\supset\ldots\supset S_m=\emptyset$$
satisfying $\sum_{k=0}^{m}s(S_k)=m$. The correspondence is
$$
j\mapsto S_\bullet \text{ with } S_k = \{ H_i\in \Aa\mid j_i>k\}\text{ for }0\le k\le m
$$
and, conversely,
$$
S_\bullet \mapsto j\text{ with } j_i=\#\{k\mid H_i\in S_k\} \text{ for }1\le i\le d.
$$
Under this correspondence, 
$$
X_j = Z_0\times Z_1^{center}\times\ldots\times Z_m^{center}
$$
is a product of affine spaces, where $Z_0=\cap_{H\in S_0}H$ and for $1\le k\le m$, 
$$Z_k^{center}=\bigcap_{H\in S_k}H^{center},$$ 
where by convention $Z_k^{center}=\A^{n}$ if $S_k=\emptyset$. The hyperplane arrangement $\Aa_j$ in $X_j$
is the product arrangement
$$
\Aa_j =  \Aa_{j,0}\times\ldots\times \Aa_{j,m}$$
where $\Aa_{j,0}$ is the restriction to $Z_0$ of
$$
\bigcup_{H\in \Aa\setminus S_{0}} H,
$$
and $\Aa_{j,k}$ for $1\le k\le m$ is the restriction to  $Z_k^{center}$ 
of  
$$
\bigcup_{H\in S_{k-1}\setminus S_k} H^{center}.
$$
In particular,
$$
\mathscr{C}_j(\Aa) = X_j\setminus\Aa_j= (Z_0\setminus \Aa_{j,0})\times\times_{k=1}^m (Z_k^{center}\setminus \Aa_{j,k}).
$$

(iv) The combinatorial type of $\Aa$ determines $T(m)$ and the combinatorial type of each $\Aa_j$. The dimension function on the intersection lattice of $\Aa$ determines further the dimension function on the intersection lattice of each $\Aa_j$.
\end{theorem}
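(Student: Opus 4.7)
The plan is to expand an $m$-jet $\gamma\in\LL_m(\A^n)$ coordinate-wise as $\gamma(t)=p_0+p_1t+\cdots+p_mt^m$ with $p_k\in\A^n$, and exploit the affine linearity of each $h_i$ to obtain $h_i(\gamma(t))=h_i(p_0)+\ell_i(p_1)t+\cdots+\ell_i(p_m)t^m$, where $\ell_i$ denotes the homogeneous linear part of $h_i$. Setting $j_i:=\ord_t h_i(\gamma)$, the factorization $f(\gamma(t))=\prod_i h_i(\gamma(t))^{s_i}$ gives $\ord_t f(\gamma)=\sum_i s_ij_i$ with leading coefficient the product of the individual leading coefficients, so the condition $\gamma\in\X_m(\Aa)$ forces every $j_i$ to be finite and $j\in S(m)$. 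Defining $\mathscr{C}_j(\Aa):=\{\gamma\in\X_m(\Aa):\ord_t h_i(\gamma)=j_i\text{ for all }i\}$ yields the disjoint decomposition of (i).

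For parts (ii) and (iii), I would decouple the conditions $\ord_t h_i(\gamma)=j_i$ by coefficient level $k$. At level $k=0$, requiring $h_i(p_0)=0$ for all $i$ with $j_i\ge 1$ and $h_i(p_0)\ne 0$ for all $i$ with $j_i=0$ places $p_0$ in $Z_0\setminus\bigcup_{H\in\Aa\setminus S_0}H$, with $S_0=\{H_i\mid j_i\ge 1\}$ and $Z_0=\bigcap_{H\in S_0}H$. At level $k\ge 1$, requiring $\ell_i(p_k)=0$ for $i$ with $j_i>k$ and $\ell_i(p_k)\ne 0$ for $i$ with $j_i=k$ places $p_k$ in $Z_k^{center}\setminus\bigcup_{H\in S_{k-1}\setminus S_k}H^{center}$, with $S_k=\{H_i\mid j_i>k\}$. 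Since these constraints decouple across $k$, the product description of $X_j$, $\Aa_j$, and $\mathscr{C}_j=X_j\setminus\Aa_j$ in (iii) drops out, and (ii) follows: each factor is irreducible as the complement of a hyperplane arrangement in an affine or linear space. The open-and-closed property in $\X_m(\Aa)$ comes from upper semi-continuity of each $\gamma\mapsto\ord_t h_i(\gamma)$ combined with the conservation law $\sum_i s_i\ord_t h_i(\gamma)=m$, which forbids any individual order from varying and so forces $\mathscr{C}_j$ to be clopen.

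The hard step is the 1-1 correspondence in (iii) between $T(m)$ and chains of complete sets. Descendingness $S_0\supset S_1\supset\cdots$ and the multiplicity identity $\sum_k s(S_k)=m$ are immediate from the definition; what needs proof is that $\mathscr{C}_j\ne\emptyset$ if and only if each $S_k$ is complete. The ``only if'' direction is the delicate one in the non-central setting: if some $H_0\in\Aa\setminus S_0$ has $H_0\supset Z_0$, the condition on $p_0$ is infeasible; and if some $H_0\in S_{k-1}\setminus S_k$ had $H_0^{center}\supset Z_k^{center}$, I would use that $H_0\supset Z_{k-1}$ (since $H_0\in S_{k-1}$), pick any $q\in Z_{k-1}\subset Z_k$, and exploit the decomposition $Z_k=q+Z_k^{center}$ to conclude that the defining equation of $H_0$ vanishes on all of $Z_k$, so $H_0\supset Z_k$, contradicting the completeness of $S_k$. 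Conversely, under completeness no prohibited linear subspace can cover the ambient space of $p_k$, so a generic choice exists at every level and $\mathscr{C}_j\ne\emptyset$.

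Finally, for the rank claim in (ii) and for part (iv), the identity $\rank(\Aa_j)=\rank(\Aa)$ follows by telescoping $\sum_k\rank(\Aa_{j,k})$: the minimal edge of $\Aa_{j,k}$ inside $Z_k^{center}$ (respectively $Z_0$ for $k=0$) is a translate of $Z_{k-1}^{center}$, so the ranks sum to $\dim\A^n-\dim(\bigcap\Aa)=\rank(\Aa)$. Combinatorial invariance in (iv) is then formal: the set $T(m)$ was recast in terms of chains of complete sets together with the multiplicity function, both intrinsic to the combinatorial type; the combinatorial type of each $\Aa_{j,k}$ is determined by the interval structure of the intersection lattice of $\Aa$ and the induced multiplicity data; and adding the dimension function on the lattice fixes $\dim Z_k$ and hence the dimensions of all factors.
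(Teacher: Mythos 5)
Your proposal follows essentially the same structure as the paper's proof: decompose $\X_m(\Aa)$ by the vector of vanishing orders $j=(\ord_t h_i(\gamma))_i$, decouple the order conditions coefficient-level by coefficient-level to get the product description $\mathscr{C}_j=(Z_0\setminus\Aa_{j,0})\times\prod_{k\ge1}(Z_k^{center}\setminus\Aa_{j,k})$, then characterize non-emptiness by completeness and telescope codimensions for the rank. Where you genuinely diverge is in handling the non-central case: the paper first proves the non-emptiness criterion for central arrangements, and then reduces the affine case to the central one by noting that $\times_{k\ge1}(Z_k^{center}\setminus\Aa_{j,k})$ is the contact stratum $\mathscr{C}_{j|_{I_0}-1}(S_0^{center})$ of the central arrangement $S_0^{center}$; you instead argue directly with the translate identity $Z_k=q+Z_k^{center}$ for $q\in Z_{k-1}$, which is a cleaner one-shot bridge between $H_0^{center}\supset Z_k^{center}$ and $H_0\supset Z_k$ and avoids the re-indexing in the paper.

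Two points need tightening before this would be a complete proof. First, the translate argument as written only establishes one direction of the non-emptiness criterion: it shows that if the factor at level $k$ is empty then some $H_0\in S_{k-1}\setminus S_k$ contains $Z_k$, so $S_k$ fails completeness, i.e. ``all $S_k$ complete $\Rightarrow\mathscr{C}_j\neq\emptyset$''. For the converse direction you must also rule out an $H_0\supset Z_k$ that lies \emph{outside} $S_{k-1}$: such an $H_0$ never appears in the level-$k$ factor, so you need an induction (take the minimal $k$ with $S_k$ not complete, or trace $H_0$ down to level $k_0=j_{i_0}\le k$, where its membership in $S_{k_0-1}\setminus S_{k_0}$ makes the level-$k_0$ factor empty). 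Second, your claim in (iv) that ``the combinatorial type of each $\Aa_{j,k}$ is determined by the interval structure of the intersection lattice of $\Aa$'' glides over the actual content: for $k\ge1$ the factor is built from \emph{centralized} hyperplanes $H^{center}$, and the statement that $L(\Aa^{center})$ is determined by $L(\Aa)$ is a nontrivial theorem of Wachs--Walker which the paper isolates as Proposition \ref{lemCen} and which you would need to invoke. Similarly, your rank telescoping quietly assumes $\bigcap\Aa\neq\emptyset$; for affine $\Aa$ the paper first passes to $\Aa^{center}$ using the same proposition.
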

\begin{proof} We can focus on the case $m\ge 1$, since the case $m=0$ is obvious.

{\it (i)}  By definition, a homomorphism $\gamma:\C[x_1,\ldots,x_n] \rightarrow \C[t]/(t^{m+1})$ belongs to $\X_m(f)$ if and only if $m=\ord_t\gamma(f)=\sum_{i=1}^d s_i\cdot\ord_t \gamma(h_i)$. Hence
$$\X_{m}(\Aa)=\bigsqcup_{j\in S(m)}\bigcap_{i=1}^d \Cont^{j_i}(h_i)_m$$
where $S(m)=\{(j_1,\ldots,j_d)\in\N^d\ |\ j_1s_1+\ldots+j_ds_d=m\}$.

Denote by $H_i^{(k)}$ the zero set in $\LL_m(X)=\A^{n(m+1)}$ of the $k$-th formal derivative $h_i^{(k)}$ as defined in Section \ref{secPre}, for $k=0,\ldots,m$. By Lemma \ref{contru}, $\Cont^{j_i}(h_i)_m$ can be written as $\bigcap_{k=0}^{j_i-1}H_i^{(k)}\setminus H_i^{(j_i)}$, hence this gives the disjoint decomposition
$$\X_{m}(\Aa)=\bigsqcup_{j\in S(m)}(X_j\setminus \bigcup_{i=1}^d H_i^{(j_i)}),$$
 where $$X_j=\bigcap_{i=1}^d\bigcap_{k=0}^{j_i-1}H_i^{(k)}.$$ The terms in the decomposition are the desired $\mathscr{C}_j(\Aa)$. Note that $j$ determines $m$.

{\it (ii)} Now we prove that $\mathscr{C}_j(\Aa)$ is Zariski open in $\X_{m}(\Aa)$. Let $j$ and $j'$ be different elements of $S(m)$, we then claim that $X_{j'}\setminus \bigcup_{i=1}^d H_i^{(j_i)}=\emptyset$. Indeed, we can choose $i_0=1,\ldots,d$ such that $j'_{i_0}\geq j_{i_0}+1$. Then
$$X_{j'}\setminus \bigcup_{i=1}^d H_i^{(j_i)}\subset H_{i_0}^{(j_{i_0})}\setminus \bigcup_{i=1}^d H_i^{(j_i)}=\emptyset$$
since $H_{i_0}^{(j_i)}$ appears in the intersection $\bigcap_{k=0}^{j'_{i_0}-1}H_{i_0}^{(k)}$. Now for a given $j\in S(m)$, we have
\begin{align*}
\X_m(\Aa)\setminus \bigcup_{i=1}^d H_i^{(j_i)} &= \bigcup_{j'\in S(m)}(X_{j'}\setminus \bigcup_{i'=1}^d H_{i'}^{(j'_{i'})})\setminus \bigcup_{i=1}^d H_i^{(j_i)} \\
&=\bigcup_{j'\in S(m)}((X_{j'}\setminus \bigcup_{i=1}^d H_i^{(j_i)})\setminus\bigcup_{i'=1}^d H_{i'}^{(j'_{i'})})\\
&= (X_j\setminus \bigcup_{i=1}^d H_i^{(j_i)})\setminus\bigcup_{i'=1}^d H_{i'}^{(j_{i'})} \\
& =X_j\setminus \bigcup_{i=1}^d H_i^{(j_i)}.
\end{align*}
which means $\mathscr{C}_j(\Aa)$ is Zariski open in $\X_{m}(f)$.

Because $\mathscr{C}_j(\Aa)$ are open and disjoint, they are also closed in $\X_m(f)$.

For each $j\in S(m)$, $X_j$ is an intersection of $m$ hyperplanes. Thus if $X_j\neq\emptyset$, then it is an affine subspace of $\A^{(m+1)n}$ of codimension at most $m$. The set $X_j\setminus \bigcup_{i=1}^d H_i^{(j_i)}$ is then the complement of the hyperplane arrangement in $X_j$ consisting of non-empty sets $X_j\cap H_i^{(j_i)}$ for all $i\in \{1,\ldots,d\}$. We denote by $\mathscr{A}_j$ this hyperplane arrangement in $X_j$, so that $\mathscr{C}_j(\Aa)=X_j\setminus\Aa_j$ and $\mathscr{C}_j(\Aa)$ must then necessarily be irreducible.


The statement about the rank of $\Aa_j$ will be proved after $(iii)$.

$(iii)$ We claim first that if $\mathscr{C}_j(\Aa)\ne\emptyset$, $X_j$ and $\Aa_j$ admit the product decompositions claimed in $(iii)$.

We can write $$X_j=\bigcap_{k=0}^m \bigcap_{i\in I_k(j)}H_i^{(k)}\quad\text{ and }\quad\bigcup_{i=1}^d H_{i}^{(j_i)}=\bigcup_{k=0}^m\bigcup_{i\in J_k(j)} H_i^{(k)}$$
where \begin{equation}\label{eqIJ}
I_k(j)=\{i\in \{1,\ldots,d\}\mid j_i>k\}\quad\text{ and }\quad J_k(j)=\{i\in \{1,\ldots,d\}\mid j_i=k\}.
\end{equation}
Hence
\begin{equation}\label{eqin}
\mathscr{C}_{j}(\mathscr{A})=\bigcap_{k=0}^m(\bigcap_{i\in I_k(j)}H_i^{(k)}\setminus\bigcup_{i'\in J_k(j)} H_{i'}^{(k)}).
\end{equation}
Note that for any $H\in \Aa$, $H^{(k)}$ is the zero set of a polynomial of variables $x_1^{(k)},\ldots,x_n^{(k)}$, hence it can be considered as a subvariety of $\A^n=\Spec \C[x_1^{(k)},\ldots,x_n^{(k)}]$. This allows us to replace the intersections in  \ref{eqin} by the products,
\begin{equation}\label{conpro}
X_j=\times_{k=0}^m \bigcap_{i\in I_k(j)}H_i^{(k)}\quad\text{ and }\quad \mathscr{C}_{j}(\mathscr{A})=\times_{k=0}^m(\bigcap_{i\in I_k(j)}H_i^{(k)}\setminus\bigcup_{i'\in J_k(j)} H_{i'}^{(k)})
\end{equation}
If $k=0$ then $H^{(0)}=H$, where as if $k>0$, then the isomorphism $\bA^n\rightarrow\bA^n$ identifying $x_1,\ldots, x_n$ with $x_1^{(k)},\ldots,x_n^{(k)}$, respectively, gives an identification $H^{(k)}=H^{center}$. Thus, by setting $Z_k=\bigcap_{i\in I_k(j)}H_i$, and $\mathscr{A}_{j,k}$ the hyperplane arrangement in $\bigcap_{i\in I_k(j)}H_i^{(k)}$ consisting of non-empty sets $\bigcap_{i\in I_k(j)}H_i^{(k)}\cap H_{i'}^{(k)}$ for $i'\in J_k(j)$, we have the product decomposition in $(iii)$.

We are left to prove the following statement: The set $\mathscr{C}_{j}(\Aa)$ is non-empty if and only if the subset $S_k:=\{H_i\ |\ i\in I_k(j)\}$ of $\Aa$ is complete for all $k=0,\ldots,m$.

Let us first prove this for $\Aa$ being a central multi-arrangement. Suppose that $\mathscr{C}_{j}(\Aa)$ is empty. Since we have the decomposition $\mathscr{C}_{j}(\mathscr{A})=\times_{k=0}^m Z_k\setminus\bigcup_{i'\in J_k(j)} H_{i'}^{(k)}$, then there is $k$ such that $$Z_k\setminus\bigcup_{i'\in J_k(j)} H_{i'}=\emptyset,$$ 
In other words, $$Z_k\subset \bigcup_{i'\in J_k(j)} H_{i'},$$ and hence $Z_k\subset H_{i'}$ for some $i'\in J_k(j)$, since $Z_k$ is irreducible. This says that $S_k$ is not complete. 

Conversely, suppose that $S_k$ is not complete. Then there exists $H_{i_0}\supset Z_k$ with $i_0 \notin I_k(j)$. Let $k_0:=j_{i_0}$, we have $k_0\leq k$, hence $I_{k_0}(j)\supset I_{k}(j)$ and hence $Z_{k_0}\subset Z_k$, so that
\begin{align*}
Z_{k_0}\setminus\bigcup_{i'\in J_{k_0}(j)} H_{i'}\subset Z_{k_0}\setminus H_{i_0}\subset Z_k\setminus H_{i_0}=\emptyset
\end{align*}
which says that $\mathscr{C}_{j}(\Aa)$ is empty.

Now let $\Aa$ be an arbitrary multi-arrangement. Note that by the description of $\mathscr{C}_j(\Aa)$ in the proof of $(i)$, we can write the product $\times_{k=1}^m (Z_k^{center}\setminus \Aa_{j,k})$ as
$$\mathscr{C}_{j|_{I_0(j)}-1}(S_0^{center}),$$
where $j|_{I_0(j)}-1=(j_i-1\ |\ i\in I_0(j))$. Thus, $\mathscr{C}_{j}(\Aa)$ is non-empty if and only if both $Z_0\setminus\bigcup_{i'\in J_0(j)} H_{i'}$ and $\mathscr{C}_{j|_{I_0(j)}-1}(S_0^{center})$ are non-empty. It is easy to see that if $\mathscr{C}_{j}(\Aa)\ne 0$ then $S_0$ is complete. Now, with the condition $S_0$ is complete, using the previous part for the central hyperplane multi-arrangement $S_0$, the set $\mathscr{C}_{j|_{I_0(j)}-1}(S_0^{center})$ is non-empty if and only if $S^{center}_1,\ldots,S^{center}_m$ are all complete in $S^{center}_0$. But as $\cap S^{center}_0=Z_0\ne \emptyset$, this is equivalent to saying that $S_1,\ldots,S_m$ are all complete in $S_0$. Since $S_0$ is complete itself in $\Aa$, this is again equivalent to saying that $S_1,\ldots,S_m$ are all complete in $\Aa$. This proves the statement, and also prove the 1-1 correspondence in $(iii)$.

Since complete subsets of $\Aa$ are into 1-1 correspondence with the elements of the intersection lattice of $\Aa$, one obtains the 1-1 correspondence claimed in $(iii)$ of Theorem \ref{thmMain}.

$(ii)-bis$. We prove now that the rank of the hyperplane arrangement $\Aa_j$ equals that of $\Aa$ for $j\in T(m)$.  By Proposition \ref{lemCen} the rank of a hyperplane arrangement does not change under the centralization. Moreover, we have the equality $(\Aa_j)^{center}=(\Aa^{center})_j$. Hence we may assume that $\mathscr{A}$ is central. We have
$$\rank\Aa_j=\sum_{k=0}^m\rank\mathscr{A}_{j,k}=\sum_{k=0}^m\left(\codim\bigcap_{i\in I_k(j)\cup J_k(j)}H_i-\codim\bigcap_{i\in I_k(j)}H_i\right).$$
Note that $I_k(j)\cup J_k(j)=I_{k-1}(j)$, where $I_{-1}(j):=\{1,\ldots,d\}$, thus
\begin{align*}
\rank\Aa_j&=\sum_{k=0}^m\left(\codim\bigcap_{i\in I_{k-1}(j)}H_i-\codim\bigcap_{i\in I_k(j)}H_i\right)\\
&=\codim\bigcap_{i=1}^dH_i-\codim\bigcap_{i\in I_m(j)}H_i\\
&=\rank\Aa,
\end{align*}
since $I_m(j)=\emptyset$.

$(iv)$ This follows from $(iii)$ and Proposition \ref{lemCen}.
\end{proof}


\begin{proposition}\label{lemCen} (i) If $\Aa$ is an affine hyperplane arrangement in $\bA^n$, then the intersection lattice of $\Aa$ determines the intersection lattice of its centralization $\Aa^{center}$ in $\bA^n$.

(ii) The rank of $\Aa$ equals the rank of $\Aa^{center}$.
\end{proposition}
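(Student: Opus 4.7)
The engine behind both parts is the following linear-algebraic observation: any finite collection of affine hyperplanes in $\bA^n$ whose normal vectors are linearly independent has non-empty common intersection, of codimension equal to the number of hyperplanes. Indeed, if $k$ such hyperplanes are given, necessarily with $k\le n$, then the coefficient matrix of the defining affine system is $k\times n$ of full row rank $k$, and the augmented matrix has only $k$ rows, so also has rank $k$; the system is therefore consistent.

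I prove (ii) first. Let $E$ be a minimal edge of $\Aa$ and set $S(E):=\{H\in\Aa:H\supset E\}$, so that $r:=\codim_X E=\rank\Aa$ equals the rank of the normals of $S(E)$. I claim that the normal of every $H'\in\Aa$ lies in the linear span of the normals of $S(E)$, which yields $\rank\Aa^{center}=r=\rank\Aa$. The case $r=n$ is immediate. If $r<n$ and some $H'$ failed this, pick $r$ hyperplanes $H_1,\dots,H_r\in S(E)$ with linearly independent normals, whose intersection equals $E$ by a codimension count; the tuple $\{H_1,\dots,H_r,H'\}$ still has linearly independent normals and $r+1\le n$, so the observation produces a non-empty $E\cap H'$. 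As $H'\not\supset E$, this is a proper sub-edge of $E$, contradicting minimality.

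For (i), I reconstruct $L(\Aa^{center})$ from $L(\Aa)$ in two stages. Stage 1 identifies $\Aa^{center}$ as a set with the parallelism classes of $\Aa$: two distinct hyperplanes $H_1,H_2\in\Aa$ are parallel iff $H_1\cap H_2=\emptyset$, which is detected by the absence of a common lower bound in $L(\Aa)$, since every non-empty intersection of hyperplanes of $\Aa$ already appears as an edge. Stage 2 recovers the rank function of the matroid on the normals of $\Aa$ via the formula
\[
r(S)=\max\{\codim(\cap S'):S'\subseteq S,\ \cap S'\neq\emptyset\},
\]
where the inequality $\le$ is clear and $\ge$ follows by selecting a maximal linearly independent family of normals in $S$ and invoking the observation. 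The matroid on the normals of $\Aa^{center}$ is obtained from the one on the normals of $\Aa$ by identifying parallel classes (Stage 1), and $L(\Aa^{center})$ is the geometric lattice of this simplified matroid since $\Aa^{center}$ is central; hence the whole of $L(\Aa^{center})$ is reconstructed from $L(\Aa)$.

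The main subtlety I foresee is that Stage 2 uses the codimension function on edges of $\Aa$; this is encoded in the intersection lattice as the rank function of the underlying geometric semilattice, and so is determined by the poset structure, but one must be careful to distinguish what is read off from the poset alone versus what is part of the geometric input.
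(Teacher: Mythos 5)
Your argument is correct, but it takes a genuinely different route from the paper. The paper proves (i) by appealing to Wachs--Walker \cite[Theorem 3.2]{WW86}: writing $\Aa^{center}=(c\Aa)^{H_0}$, where $H_0$ is the hyperplane at infinity and $c\Aa$ the cone, the lattice $L(\Aa^{center})$ is the interval $[H_0,\hat{1}]$ in $L(c\Aa)$ and hence determined by $L(\Aa)$ (made explicit as ``identify parallel edges''), and then (ii) is deduced from (i). You reverse the logic and argue from scratch: (ii) is proved directly by the consistency (Rouch\'e--Capelli) observation, showing every normal of $\Aa$ lies in the span of the normals of the hyperplanes through a fixed minimal edge $E$, so $\rank\Aa^{center}=\codim E=\rank\Aa$; and (i) is proved by reconstructing the simple matroid of normal directions from the poset $L(\Aa)$ -- parallelism detected by empty pairwise intersections, the rank function via $r(S)=\max\{\codim\bigcap S' : S'\subseteq S,\ \bigcap S'\neq\emptyset\}$ -- and then using that $L(\Aa^{center})$ is the lattice of flats of this matroid since $\Aa^{center}$ is central. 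Both proofs are sound; yours buys a self-contained elementary argument (and, as a by-product, re-proves that all minimal edges have the same codimension, which the paper takes as known), while the paper's citation is shorter and records the explicit edge-level description of $L(\Aa^{center})$ that it uses later. The one step you flag yourself -- that codimension of edges is recoverable from the poset alone -- is indeed the only point needing a word, and it does hold: an edge $Z$ with $\codim Z=c$ has height exactly $c$ in $L(\Aa)$, since codimension increases strictly along chains and a chain of length $c$ down to $Z$ is produced by intersecting $c$ hyperplanes through $Z$ with linearly independent normals; so your use of $\codim$ stays within the combinatorial data, consistent with the fact that only the dimension (not codimension) function is treated as extra data elsewhere in the paper.
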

\begin{proof} (i) This is \cite[Theorem 3.2]{WW86}. Note that $\Aa^{center}=(c \Aa)^{H_0}$, where $H_0$ is a new hyperplane ``at infinity" making $\Aa\cup H_0$ a projective arrangement in $\bP^n$, $c \Aa$ is the cone over $\Aa\cup H_0$, and $(c \Aa)^{H_0}$ is the affine arrangement in $H_0$ obtained by restricting the cone, see e.g. \cite[Remark 2.3 and Definition 2.13]{Dimca17}. Then the intersection lattice $L((c \Aa)^{H_0})$ is the interval $[H_0,\hat{1}]$ inside $L(c \Aa)$, and therefore $L((c \Aa)^{H_0})$ is  determined by $L(\Aa)$.

Specifically, the intersection lattice of $\Aa^{center}$ is obtained by identifying the parallel edges in the intersection lattice of $\Aa$. Here two edges $Z_1$ and $Z_2$ of $\Aa$ are said to be parallel if any of the following equivalent conditions holds:
\begin{align*}
Z_1^{center}=Z_2^{center}&\Leftrightarrow \rank Z_1=\rank Z_2 \text{ and } Z_1^{center}\supset Z_2^{center}\\
&\Leftrightarrow\rank Z_1=\rank Z_2 \text{, and } H^{center}\supset Z_2^{center} \text{ for all } H\supset Z_1\\
&\Leftrightarrow\rank Z_1=\rank Z_2 \text{, and } H\supset Z_2 \text{ or } H\cap Z_2=\emptyset \text{ for all } H\supset Z_1.
\end{align*}

(ii) This follows from (i).
\end{proof}
\begin{remark}\label{rmkCC} By counting the number of times a complete set occurs in a chain, we can rephrase the 1-1 correspondence from Theorem \ref{thmMain2} as follows: For $m\ge 1$ there is a 1-1 correspondence between the set of irreducible components of the contact locus $\X_{m}(\Aa)$ and the set of strictly descending chains of complete sets in $\Aa$
$$T_1\supsetneq T_2\supsetneq\ldots\supsetneq T_l\supsetneq\emptyset\quad\text{ for some }l\ge 1$$ together with an assignment $\nu:\{T_1\,\ldots, T_{l}\}\to \N^*$  such that $\sum_{\alpha=1}^{l}\nu(T_{\alpha})s(T_{\alpha})=m$.
\end{remark}


\begin{proof}[Proof of Theorem \ref{thmIsom}]
This is a direct consequence  of Theorem \ref{thmMain} and the Orlik-Solomon theorem.
\end{proof}

\begin{proposition}\label{cohexp} Let $\Aa$ be a hyperplane multi-arrangement in $\A^n$, $m\in \N$, and $R$ an unitary commutative ring. Then the cohomology algebra of the $m$-contact locus of $\Aa$ is the graded algebra
$$
H^*(\X_m(\Aa),R)\simeq \bigoplus_{j\in T(m)} E/ \cI_j.
$$
where  $E=\bigwedge\langle e_1,\ldots,e_d\rangle$, the exterior algebra of the free $R$-module $Re_1\oplus\ldots\oplus Re_d$, and $\cI_j$ is the ideal of $E$ generated by the union of the following sets:

1. The set of $e_J$ such that $J\subset J_0(j)$, and $\bigcap_{i\in I_0(j)\cup J}H_i=\emptyset,$

2. The set of all $\partial e_J$ such that $J\subset J_0(j)$, and $$\rank \bigcap_{i\in I_0(j)\cup J}H_i-\rank\bigcap_{i\in I_0(j)}H_i< |J|,$$

3. The set of all $\partial e_J$ such that $J\subset J_k(j)$ for some $k=1,\ldots,m$, and $$\rank \bigcap_{i\in I_k(j)\cup J}H_i^{center}-\rank \bigcap_{i\in I_k(j)}H_i^{center}< |J|.$$
Here $T(m)$ is defined as in Theorem \ref{thmMain2} {\color{hot}$(iii)$},
and $I_k(j)$, $J_k(j)$ are defined as in (\ref{eqIJ}).

\end{proposition}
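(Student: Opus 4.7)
The plan is to combine the decomposition of Theorem \ref{thmMain2} with the Künneth formula and the Orlik-Solomon theorem. Since Theorem \ref{thmMain2} (i)-(ii) gives a disjoint decomposition $\X_m(\Aa) = \bigsqcup_{j \in T(m)} \mathscr{C}_j(\Aa)$ into Zariski open and closed pieces, the cohomology algebra splits as an $R$-algebra direct sum
$$
H^*(\X_m(\Aa),R) \simeq \bigoplus_{j\in T(m)} H^*(\mathscr{C}_j(\Aa),R),
$$
with products across distinct summands vanishing, so it suffices to identify each summand with $E/\cI_j$.

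For a fixed $j\in T(m)$, Theorem \ref{thmMain2} (iii) writes $\mathscr{C}_j(\Aa)$ as a product of complements of one affine arrangement $\Aa_{j,0}$ in $Z_0$ and $m$ central arrangements $\Aa_{j,k}$ in $Z_k^{center}$, $k\ge 1$. The Künneth formula, applicable over any commutative ring $R$ since arrangement complements have torsion-free cohomology, gives $H^*(\mathscr{C}_j(\Aa),R)$ as the tensor product $\bigotimes_{k=0}^m H^*(Z_k^{(\star)}\setminus\Aa_{j,k},R)$, where $Z_0^{(\star)}=Z_0$ and $Z_k^{(\star)}=Z_k^{center}$ for $k\ge 1$. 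Applying the Orlik-Solomon theorem (affine version for $k=0$, central version for $k\ge 1$) to each factor presents it as $E_k/I_k$, with $E_k=\bigwedge\langle e_i\mid i\in J_k(j)\rangle$ and $I_k$ generated by the standard Orlik-Solomon relations for $\Aa_{j,k}$. Since $\{J_k(j)\}_{k=0}^m$ partitions $\{1,\ldots,d\}$, the canonical isomorphism $E\simeq\bigotimes_k E_k$ turns $\bigotimes_k E_k/I_k$ into $E/(I_0+\cdots+I_m)$, reducing the proof to verifying $I_0+\cdots+I_m = \cI_j$.

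The main obstacle is the final bookkeeping step just mentioned. Item 1 of the statement encodes the empty-intersection relations for the affine arrangement $\Aa_{j,0}$: since $\bigcap_{i\in I_0(j)}H_i = Z_0$, the condition $\bigcap_{i\in J}(Z_0\cap H_i) = \emptyset$ is equivalent to $\bigcap_{i\in I_0(j)\cup J}H_i = \emptyset$. Items 2 and 3 encode the dependency relations for $\Aa_{j,0}$ and for $\Aa_{j,k}$ with $k\ge 1$ respectively, via the identity
$$
\codim_{Z_k^{(\star)}}\bigcap_{i\in J}\bigl(Z_k^{(\star)}\cap H_i^{(\star)}\bigr) = \rank\bigcap_{i\in I_k(j)\cup J} H_i^{(\star)} - \rank\bigcap_{i\in I_k(j)} H_i^{(\star)},
$$
with $H_i^{(\star)}=H_i$ for $k=0$ and $H_i^{(\star)}=H_i^{center}$ for $k\ge 1$. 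The Orlik-Solomon dependency condition for $\Aa_{j,k}$ is that this codimension is strictly less than $|J|$, which is precisely the rank inequality appearing in items 2 and 3, completing the identification.
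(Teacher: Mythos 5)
Your proposal is correct, but it identifies each summand by a different route than the paper. Both proofs start identically: the disjoint open--closed decomposition $\X_m(\Aa)=\bigsqcup_{j\in T(m)}\mathscr{C}_j(\Aa)$ from Theorem \ref{thmMain2} gives the direct sum of algebras. For the identification of $H^*(\mathscr{C}_j(\Aa),R)$ with $E/\cI_j$, the paper applies the Orlik--Solomon theorem once to the whole arrangement $\Aa_j$ in $X_j$, obtaining $E/I(\Aa_j)$, and then proves the ideal equality $I(\Aa_j)=\cI_j$ by hand inside $E$: the inclusion $\cI_j\subset I(\Aa_j)$ is immediate, and the reverse inclusion is an element-chasing argument using the product structure $\Aa_j=\times_k\Aa_{j,k}$ together with the identities $\partial e_J=\partial e_{J'}e_{J\setminus J'}+(-1)^{|J'|}e_{J'}\partial e_{J\setminus J'}$ and $e_{J'}=e_i\,\partial e_{J'}$. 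You instead apply the K\"unneth formula to the product decomposition of $\mathscr{C}_j(\Aa)$, use Orlik--Solomon factor by factor, and then glue the presentations via $\bigotimes_k(E_k/I_k)\simeq E/\langle I_0,\ldots,I_m\rangle$, so that the only remaining work is the translation of the empty-intersection and dependency conditions into the rank inequalities of items 1--3, which you carry out correctly. In effect you replace the paper's ideal computation (which amounts to reproving, in this setting, that the Orlik--Solomon ideal of a product arrangement is generated by the factor-wise relations) by the topological K\"unneth splitting; this requires the extra observations you make, namely that arrangement complements have free finitely generated integral cohomology so that K\"unneth is a ring isomorphism with arbitrary $R$ coefficients, while the paper's argument is self-contained within the exterior algebra and needs no K\"unneth input. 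One cosmetic point: your $I_0+\cdots+I_m$ should be read as the ideal of $E$ generated by the images of the $I_k$, and the generators $e_i$ with $i\in J_0(j)$ whose trace on $Z_0$ is empty are harmlessly killed by the singleton relations of item 1, exactly as in the paper's convention.
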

\begin{proof}

 We use the notation as in the proof of Theorem \ref{thmMain2}. The direct sum follows from the disjoint decomposition. Next, by definition the Orlik-Solomon algebra of $\Aa_j$ is given by $E/I(\Aa_j)$, where $I(\Aa_j)$ is the ideal of $E$ generated by

i. The set of $e_J$ such that $X_j\cap\bigcap_{i\in J}H_i^{(j_i)}=\emptyset$,

ii. The set of all $\partial e_J$ such that $J$ is dependent in $\Aa_j$, i.e. $$\rank_{\A^{n(m+1)}}(X_j\cap\bigcap_{i\in J}H_i^{(j_i)})-\rank_{\A^{n(m+1)}} X_j<|J|. $$

It is clear that $\cI_j\subset I(\Aa_j)$. Now suppose that $X_j\cap\bigcap_{i\in J}H_i^{(j_i)}=\emptyset$. Since $\mathscr{A}_{j}=\times_{k=0}^m\mathscr{A}_{j,k}$, there is a subset $J'$ of $J$ such that $J'\subset J_k(j)$ for some $k$, and $\bigcap_{i\in I_k(j)\cup J'}H_i^{(k)}=\emptyset$. But this can only happen when $k=0$, because $H_i^{(k)}$ passes through the origin when $k\geq 1$ . This means $e_{J'}\in I$, hence $e_J\in I$. Also, suppose that $J\subset \{1,\ldots,d\}$ is dependent in $\mathscr{A}_j$. Then there is a subset $J'$ of $J$ such that $J'\subset J_k(j)$ for some $k$ and $J'$ is dependent in $\mathscr{A}_{j,k}$, which means that $\partial e_{J'}\in I$. Note that $\partial e_J=\partial e_{J'}.e_{J\setminus J'}+(-1)^{|J'|}e_{J'}.\partial e_{J\setminus J'}$, and $e_{J'}=e_i.\partial e_{J'}$ for any $i\in J'$, so that $\partial e_J\in I$. Thus the proposition holds.
\end{proof}

\section{Restricted contact loci of hyperplane arrangements}\label{secRCL}

In this section we prove Theorem \ref{prorescon2} from the introduction, and its generalization to the non-central case.

Let us first explicit the natural multi-arrangement structure on the the hyperplane arrangements that appeared in Theorem \ref{thmMain2}:

\begin{lemma}\label{lemMAA}
Let $\Aa$ be the hyperplane multi-arrangement in $\bA^n$ defined by $f=h_1^{s_1}\ldots h_d^{s_d}$. With the notation as in Theorem \ref{thmMain2}, let $m\in\N$ and $j\in T(m)$.

(i) The $m$-th formal derivative of $f$ (as defined in Section \ref{secPre}) restricted to the affine space $X_j$ is
$$
f^{(m)}\,_{| X_j} = m! f_j,\quad\text{ with } f_j=\prod_{k=0}^m f_{j,k},\quad\text{ and }
f_{j,k}:=\prod_{i: j_i=k}\left(\frac{h_i^{(k)}\,_{|X_j}}{k!}\right)^{s_i}.
$$

(ii) The polynomial $f_j$ defines the hyperplane multi-arrangement $\Aa_j$.

(iii) The product decomposition $f_j=\prod_{k=0}^m f_{j,k}$ defines the decomposition of hyperplane multi-arrangements $\Aa_j=\times_{k=0}^m\Aa_{j,k}$ for $m\ge 1$.
\end{lemma}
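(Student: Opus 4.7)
The plan is to establish (i) by a direct power-series computation: expand $f$ along the arc corresponding to a point $a\in X_j$ and read off the coefficient of $t^m$. Parts (ii) and (iii) will then follow by matching this explicit polynomial with the product description of $\Aa_j$ from Theorem \ref{thmMain2}(iii).

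For (i), fix $a\in X_j$ and let $\gamma$ be the associated $m$-jet. From the construction of the derivation $D$ in Section \ref{secPre},
$$h_i(\gamma(t)) \;=\; \sum_{k\ge 0}\frac{h_i^{(k)}(a)}{k!}\,t^k.$$
Since $a\in X_j$ we have $h_i^{(k)}(a)=0$ for every $0\le k<j_i$, so
$$h_i(\gamma(t)) \;=\; t^{j_i}\left(\frac{h_i^{(j_i)}(a)}{j_i!} + O(t)\right).$$
Taking $s_i$-th powers and multiplying, and using $m=\sum_i s_i j_i$, the coefficient of $t^m$ in $f(\gamma(t))=\prod_i h_i(\gamma(t))^{s_i}$ equals $\prod_{i=1}^d\bigl(h_i^{(j_i)}(a)/j_i!\bigr)^{s_i}$. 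By the definition of the formal derivative, this coefficient is also $f^{(m)}(a)/m!$. Regrouping the product by the value $k=j_i$ yields the formula $f^{(m)}|_{X_j}=m!\,f_j$.

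For (ii), the factors in $f_j$ are the linear forms $h_i^{(k)}|_{X_j}$ indexed by the pairs $(i,k)$ with $j_i=k$, each occurring with exponent $s_i$. By Theorem \ref{thmMain2}(iii) their vanishing loci in $X_j$ are precisely the hyperplanes $X_j\cap H_i^{(j_i)}$ that cut out the underlying arrangement of $\Aa_j$, and if several of these linear forms define the same hyperplane in $X_j$ then their exponents add up to the natural multi-arrangement multiplicity on $\Aa_j$ (in the same way that the multiplicities on $\Aa^{center}$ are obtained from those on $\Aa$). Thus $f_j$ defines $\Aa_j$ as a multi-arrangement.

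For (iii), recall that $h_i^{(k)}$ depends only on the jet variables $x_1^{(k)},\dots,x_n^{(k)}$, which under the product decomposition $X_j=Z_0\times Z_1^{center}\times\cdots\times Z_m^{center}$ of Theorem \ref{thmMain2}(iii) are the coordinates of the $k$-th factor. Hence $f_{j,k}$ is pulled back from the $k$-th factor, and the decomposition $f_j=\prod_{k=0}^m f_{j,k}$ corresponds precisely to the product decomposition $\Aa_j=\Aa_{j,0}\times\cdots\times\Aa_{j,m}$ for $m\ge 1$. The only substantive step is the derivative identity in (i); once one observes that the condition $m=\sum_i s_i j_i$ forces the coefficient of $t^m$ in $f(\gamma(t))$ to receive contributions only from the leading term of each factor $h_i(\gamma(t))^{s_i}$, the computation is automatic and the remaining parts reduce to bookkeeping via Theorem \ref{thmMain2}(iii).
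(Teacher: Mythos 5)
Your proof is correct; the only difference from the paper is the device used to get the key identity in (i). The paper writes $f=l_1\cdots l_N$ as a product of $N=s_1+\cdots+s_d$ linear factors and first proves, by induction, the multinomial Leibniz formula
$$f^{(m)}=\sum_{\beta_1+\cdots+\beta_N=m}\frac{m!}{\beta_1!\cdots\beta_N!}\,l_1^{(\beta_1)}\cdots l_N^{(\beta_N)},$$
then restricts to $X_j$: since $l_t^{(\beta_t)}|_{X_j}=0$ whenever $\beta_t<j_{i(t)}$ and $\sum_t\beta_t=m=\sum_i s_ij_i$, only the term with $\beta_t=j_{i(t)}$ survives, giving $f^{(m)}|_{X_j}=m!\,f_j$. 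You instead exploit the fact, already established in Section \ref{secPre}, that $g\mapsto\sum_k\frac{g^{(k)}(a)}{k!}t^k$ is the substitution homomorphism $\gamma$; multiplicativity of $\gamma$ plus the order count $\ord_t\gamma(h_i)\ge j_i$ and $\sum_i s_ij_i=m$ isolates the $t^m$-coefficient as $\prod_i\bigl(h_i^{(j_i)}(a)/j_i!\bigr)^{s_i}$, which is $f^{(m)}(a)/m!$. This is the same leading-coefficient computation, but packaged through the jet/arc interpretation, so no inductive Leibniz identity is needed; the paper's route, in exchange, records the explicit formula for $f^{(m)}$ before restriction. (Minor point: since you phrase it with an $m$-jet, the expansions should be read modulo $t^{m+1}$, or $a$ lifted to an arc; either way the coefficient of $t^m$ is unaffected.) Your treatment of (ii) and (iii) — identifying the zero loci of the factors $h_i^{(j_i)}|_{X_j}$ with the hyperplanes $X_j\cap H_i^{(j_i)}$ of $\Aa_j$, with multiplicities added when forms coincide, and noting that $h_i^{(k)}$ involves only the variables $x_1^{(k)},\dots,x_n^{(k)}$ so that $f_{j,k}$ lives on the $k$-th factor of $X_j$ — is exactly the bookkeeping the paper delegates to the proof of Theorem \ref{thmMain2}, so these parts match as well.
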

\begin{proof}
First, we write $f=l_1\ldots l_N$ with $N=s_1+\ldots+s_d$, where $l_t$ are the polynomials $h_i$ with repetitions. By induction, one can show that
\begin{equation}\label{eqder}
f^{(m)}=\sum_{\beta_1+\ldots+\beta_N=m} \frac{m!}{\beta_1!\ldots\beta_N!} l_1^{(\beta_1)}\ldots l_N^{(\beta_N)}.
\end{equation}
Since $X_j=\cap_{i=1}^d\cap_{k=0}^{j_i-1}H_i^{(k)}$ where $H_i^{(k)}$ is the zero locus of $h^{(k)}_i$, the restriction of $l_t^{(\beta_t)}$ of $X_j$ is $0$ if $\beta_t<j_i$ for $i$ such that the zero locus $l_t=h_i$. Since $\sum_{i=1}^dj_is_i=m$, this forces
$$
f^{(m)}\,_{|X_j}= \frac{m!}{(j_1!)^{s_1}\ldots(j_d!)^{s_d}}\prod_{i=1}^dh_i^{(j_i)}\,_{|X_j}.
$$
This gives $(i)$. Parts $(ii)$ and $(iii)$ follow immediately from $(i)$ and the proof of Theorem \ref{thmMain2}.
\end{proof}

 We have a decomposition for the restricted contact loci as follows:

\begin{theorem}\label{prorescon} Let $\Aa$ be a hyperplane multi-arrangement in $\bA^n$ defined by $f=h_1^{s_1}\ldots h_d^{s_d}$ as in Theorem \ref{thmMain2}. For $m\in \N$, there is a disjoint decomposition into Zariski open and closed subsets of the restricted contact locus  
$$\XX_{m}(f)=\bigsqcup_{j\in T(m)}\mathscr{F}_j(f),$$
where $\mathscr{F}_j(f)$ is the fiber of $f_j$ at the point $(j_1!)^{s_1}\ldots(j_d!)^{s_d}\in \C$, with $f_j$ as in Lemma \ref{lemMAA}. 

In particular, if $\Aa$ is central, $\mathscr{F}_j(f)$ is the Milnor fiber of $\Aa_j$ for every $j\in T(m)$.
\end{theorem}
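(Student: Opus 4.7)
The strategy is to intersect the decomposition $\X_m(f)=\bigsqcup_{j\in T(m)}\mathscr{C}_j(\Aa)$ provided by Theorem \ref{thmMain2}(i) with the subscheme $\XX_m(f)\subset\X_m(f)$, which by Lemma \ref{lemXoX} is cut out from $\X_m(f)$ by the single additional equation $f^{(m)}(a)=m!$. Since each $\mathscr{C}_j(\Aa)$ is Zariski open and closed in $\X_m(f)$ by Theorem \ref{thmMain2}(ii), its intersection with $\XX_m(f)$ is open and closed in $\XX_m(f)$, and the resulting disjoint pieces exhaust $\XX_m(f)$. What remains is to identify each piece $\mathscr{C}_j(\Aa)\cap\XX_m(f)$ with the stated fiber $\mathscr{F}_j(f)$.

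The key input is Lemma \ref{lemMAA}. Part (i) asserts that on the ambient affine space $X_j$ of $\mathscr{C}_j(\Aa)$, the restriction of $f^{(m)}$ is a non-zero scalar multiple of $f_j$, so the condition $f^{(m)}(a)=m!$ becomes a single equation of the form $f_j(a)=c$ with an explicit non-zero constant $c\in\C^*$; this is exactly the fiber over the point specified in the theorem. Part (ii) says that $f_j$ is a defining polynomial for $\Aa_j$ inside $X_j$, so the condition $a\in X_j\setminus\Aa_j$ that defines $\mathscr{C}_j(\Aa)$ is automatic once $f_j(a)=c\ne 0$. Combining these,
$$
\mathscr{C}_j(\Aa)\cap\XX_m(f)\;=\;\{a\in X_j\mid f_j(a)=c\}\;=\;\mathscr{F}_j(f),
$$
which yields the claimed disjoint decomposition.

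For the final sentence in the central case, every hyperplane of $\Aa$ passes through the origin, so $X_j$ is a linear subspace through the origin and $f_j$ is, in any linear coordinates on $X_j$, a product of linear forms with positive multiplicities, hence a homogeneous polynomial of degree $\sum_{i=1}^d s_i$. A non-zero fiber of a homogeneous polynomial is diffeomorphic via radial rescaling to its fiber over $1$, which is by definition the Milnor fiber of the arrangement it defines; this identifies $\mathscr{F}_j(f)$ with the Milnor fiber of $\Aa_j$.

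There is no serious obstacle: the computational content sits entirely in Lemma \ref{lemMAA}, already in hand, and the rest is bookkeeping that combines the decomposition of $\X_m(f)$ with the single equation defining $\XX_m(f)$. The only point worth flagging is that the open condition $a\in X_j\setminus\Aa_j$ is absorbed automatically by the non-zero level-set equation $f_j(a)=c$, which is precisely where Lemma \ref{lemMAA}(ii) is used.
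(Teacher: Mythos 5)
Your proof is correct and follows essentially the same route as the paper: intersect the decomposition of $\X_m(f)$ from Theorem \ref{thmMain2} with the single extra equation $f^{(m)}=m!$ from Lemma \ref{lemXoX}, use Lemma \ref{lemMAA} to convert this on each $X_j$ into a nonzero level-set condition on $f_j$ (which automatically absorbs the condition of lying off $\Aa_j$), and invoke homogeneity of $f_j$ in the central case to identify that fiber with the Milnor fiber of $\Aa_j$. The only difference is that you make explicit the openness/closedness of the pieces and the radial-rescaling identification, which the paper leaves implicit.
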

\begin{proof} Using the description of restricted contact locus $\XX_m(f)$ from Lemma \ref{lemXoX} and the decomposition from Theorem \ref{thmMain2},
$$
\XX_m(f) = \X_m(f)\cap Z(f^{(m)}-m!)= \bigsqcup_{j\in T(m)}(X_j\setminus \Aa_j)\cap Z(f^{(m)}-m!),
$$
where $Z(f)$ denotes the zero locus of $f$. Then
$$
\XX_m(f)=\bigsqcup_{j\in T(m)} Z(f_j-(j_1!)^{s_1}\ldots(j_d!)^{s_d})
$$
by Lemma \ref{lemMAA} $(ii)$. This proves the first claim.

If $f$ is central case, then $f_j$ are also central, and the  fiber over any non-zero point of a polynomial defining a central hyperplane multi-arrangement is its Milnor fiber.
\end{proof}


\begin{rmk}
It is natural to expect that the Betti numbers of the restricted contact loci of hyperplane arrangements are combinatorial invariants. The similar question for the Milnor fibers of central hyperplane arrangements is a well-known open problem.
\end{rmk}

\section{Generic hyperplane arrangements}\label{exgen} 

In this section we compute the Betti numbers for contact loci of {generic} hyperplane arrangements, proving Theorem \ref{betgenarr}. Recall that an arrangement $\B^n_d$ of $d\ge 1$ hyperplanes  in $\A^n$ is said to be {\it generic}  if it is reduced and if for any subset of hyperplanes $S\subset\B^n_d$, we have $\codim \bigcap S=|S|$ when $|S|\leq n$, and $\bigcap S=\emptyset$ when $|S|>n$. 

We will use the notation $M(\Aa)$ for the complement of a hyperplane arrangement $\Aa$ in general. 

We need to use the following well-known result on Betti polynomials of complements of generic arrangements, see \cite[Example 2.19]{Dimca17}:

\begin{lemma}\label{genarr}
If $\B^n_d$ is a generic  arrangement of $d\ge 1$ hyperplanes in $\A^n$, then:

1. $B(M(\B^n_d),t)=(1+t)^d$ if $d\leq n$,

2. $B(M(\B^n_d),t)=\sum_{k=0}^n\binom{d}{k}t^k$ if $d>n$.
\end{lemma}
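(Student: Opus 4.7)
The plan is to invoke the Orlik–Solomon theorem, which expresses the Betti polynomial of the complement of a hyperplane arrangement $\Aa$ in terms of its intersection lattice as
\[
B(M(\Aa),t) = \sum_{Z\in L(\Aa)} |\mu(\hat 0, Z)|\, t^{\mathrm{codim}\,Z},
\]
and then to read off both cases from the very simple combinatorics of a generic arrangement.

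For case 1 ($d\le n$): genericity together with $|S|\le d\le n$ implies that every subset $S\subset\B^n_d$ intersects in a flat of codimension $|S|$. Thus $L(\B^n_d)$ is the Boolean lattice $B_d$ on the $d$ hyperplanes, with rank function $|S|$. In a Boolean lattice the Möbius function satisfies $\mu(\hat 0, S)=(-1)^{|S|}$, so
\[
B(M(\B^n_d),t) = \sum_{k=0}^d \binom{d}{k} t^k = (1+t)^d.
\]
(Alternatively, in this range the $d$ linear forms defining the hyperplanes are linearly independent and can be completed to a coordinate system on $\A^n$, identifying $M(\B^n_d)$ with $(\C^*)^d\times \C^{n-d}$; the Künneth formula then gives the same answer.)

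For case 2 ($d>n$): subsets $S$ of size $\le n$ still intersect in a flat of codimension $|S|$ by genericity, while subsets of size $>n$ have empty intersection and thus contribute nothing to $L(\B^n_d)$. So the rank-$k$ part of $L(\B^n_d)$ consists of exactly $\binom{d}{k}$ flats for $0\le k\le n$. For any such flat $Z=\bigcap S$ with $|S|=k\le n$, the interval $[\hat 0, Z]$ in $L(\B^n_d)$ contains every subset $T\subseteq S$ (since $|T|\le n$ too, so $\bigcap T$ is a flat), and is therefore the Boolean lattice $B_k$; hence $|\mu(\hat 0,Z)|=1$. Summing the Orlik–Solomon formula gives
\[
B(M(\B^n_d),t)=\sum_{k=0}^n\binom{d}{k}t^k.
\]

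The only nontrivial point is the Möbius computation for the top-codimension flats in case 2, which amounts to verifying that the down-set of a codimension-$n$ flat in $L(\B^n_d)$ is still Boolean of rank $n$; this is immediate from genericity since every subset of the $n$ hyperplanes cutting out that flat gives a distinct flat of the expected codimension. Everything else is bookkeeping.
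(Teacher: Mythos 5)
Your argument is correct, but it is worth noting that the paper does not actually prove this lemma at all: it simply quotes it as a well-known fact with a reference to \cite[Example 2.19]{Dimca17}. What you supply is a genuine derivation via the Orlik--Solomon/Whitney formula $B(M(\Aa),t)=\sum_{Z\in L(\Aa)}|\mu(\hat 0,Z)|\,t^{\codim Z}$ (valid for affine arrangements, with $L(\Aa)$ the intersection semilattice), together with the observation that genericity makes $L(\B^n_d)$ the Boolean lattice $B_d$ when $d\le n$, and its truncation to ranks $\le n$ when $d>n$, with every interval $[\hat 0,Z]$ Boolean and hence $|\mu(\hat 0,Z)|=1$. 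The key verification — that for a codimension-$k$ flat $Z=\bigcap S$ with $|S|=k\le n$ the only hyperplanes containing $Z$ are those in $S$, so the interval is $B_k$ — is exactly right and is what makes the Möbius values trivial. Your alternative argument in case 1, identifying $M(\B^n_d)$ with $(\C^*)^d\times\C^{n-d}$ after an affine change of coordinates and applying Künneth, is also correct and is arguably the most elementary route for that case. So the two ``approaches'' differ only in that the paper outsources the computation to the literature, while you carry it out; your proof buys self-containedness at the cost of invoking the Orlik--Solomon theorem, which the paper in any case already relies on elsewhere (\cite[Corollary 3.6]{Dimca17}).
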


\begin{proof}[Proof of Theorem \ref{betgenarr}] We will use Theorem \ref{thmMain2} and the notation introduced in its proof, with $\Aa$ now being $\B^n_d$.

First, assume that $d\leq n$. In this case, every subset of $\B^n_d$ is complete, hence $\mathscr{C}_{j}(\B^n_d)$ is non-empty for any $j\in S(m)$. Moreover, every $(\B^n_d)_{j}$ is also a generic arrangement. We have  that
\begin{equation}\label{d<n}
B(\X_{m}(\B^n_d),t)=|S(m)|\cdot(1+t)^d=\begin{pmatrix}m+d-1\\m\end{pmatrix}(1+t)^d.
\end{equation}
We will use this expression in what follows.

Now, let $d>n$. Note that a set $S\subset\B^n_d$ is complete if and only if $|S|\leq n$. Hence $\mathscr{C}_{j}(\B^n_d)$ is nonempty if and only if $I_k(j)\leq n$, for all $k=0,\ldots,m$, which is equivalent to say that $I_0(j)\leq n$. Thus
$$\X_{m}(\B^n_d)=\bigsqcup_{j\in S(m),\; |I_0(j)|\leq n}\mathscr{C}_{j}(\B^n_d)=\bigsqcup_{l=1}^n\bigsqcup_{j\in S(m),\; |I_0(j)|=l}\mathscr{C}_{j}(\B^n_d).$$
Write $(\B^n_d)_{j}$ under the form $(\B^n_d)_{j,0}\times (\B^n_d)_{j,\geq 1}$, we then have
$$B(\X_{m}(\B^n_d),t)=\sum_{l=1}^n\sum_{|I_0(j)|=l}B(M((\B^n_d)_{j,0}),t)\cdot B(M((\B^n_d)_{j,\geq1}),t).$$
Note that if $|I_0(j)|=l$, $(\B^n_d)_{j,0}$ is a generic arrangement with $d-l$ hyperplanes in a $(n-l)$-dimensional affine space, hence $$B(M((\B^n_d)_{j,0}),t)=\sum_{i=0}^{n-l}\begin{pmatrix}d-l\\i\end{pmatrix}t^i$$ by Lemma \ref{genarr}. Meanwhile, $(\B^n_d)_{j,\geq1}$ is a generic arrangement with $l$ hyperplanes in a $(nm-m+l)$-dimensional space, with $l\leq nm-m+l$, hence $$B(M((\B^n_d)_{j,\geq1}),t)=(1+t)^l.$$ Thus
\begin{align*}
B(\X_{m}(\B^n_d),t)&=\sum_{l=1}^n\left(\sum_{i=0}^{n-l}\begin{pmatrix}d-l\\i\end{pmatrix}t^i\right)(1+t)^l\sum_{j\in S(m),\; |I_0(j)|=l}1\\
&=\sum_{l=1}^n\begin{pmatrix}d\\l\end{pmatrix}\begin{pmatrix}m-1\\m-l\end{pmatrix}\left(\sum_{i=0}^{n-l}\begin{pmatrix}d-l\\i\end{pmatrix}t^i\right)(1+t)^l.
\end{align*}
In particular, the $k$-th Betti number is
\begin{equation}\label{eqBsB}
b_k(\X_m(\B^n_d))=\sum_{l=1}^n\begin{pmatrix}d\\l\end{pmatrix}\begin{pmatrix}m-1\\m-l\end{pmatrix}\sum_{i} \begin{pmatrix}d-l\\i\end{pmatrix}\begin{pmatrix}l\\k-i\end{pmatrix},
\end{equation}
for every $k\in [n]$, where the sum $\sum_{i}$ is over all $i$ satisfying $0\leq i\leq n-l$ and $0\leq k-i\leq l$. We will reduce this to obtain a nicer fomula of Betti numbers. First, in the sum indexed by $i$, we can substitute $i$ by $n-l-i$. We get

\begin{align*}
b_k(\X_m(\B^n_d))&=\sum_{l=1}^n\begin{pmatrix}d\\l\end{pmatrix} \begin{pmatrix}m-1\\m-l\end{pmatrix} \sum_{i} \begin{pmatrix}d-l\\n-i-l\end{pmatrix} \begin{pmatrix}l\\n-k-i\end{pmatrix}\\
&= \sum_{l=1}^n\sum_{i}\begin{pmatrix}d\\l\end{pmatrix} \begin{pmatrix}d-l\\n-i-l\end{pmatrix}  \begin{pmatrix}l\\n-k-i\end{pmatrix} \begin{pmatrix}m-1\\m-l\end{pmatrix},
\end{align*}
where $\sum_{i}$ now is over all $i$ such that $0\leq i\leq n-l$ and $0\leq n-k-i\leq l$. Using the well-known formula
\begin{equation}\label{comfor}
\begin{pmatrix}a\\b\end{pmatrix}\begin{pmatrix}b\\c\end{pmatrix}=\begin{pmatrix}a\\c\end{pmatrix}\begin{pmatrix}a-c\\b-c\end{pmatrix}
\end{equation}
for any natural number $a,b,c$ such that $a\geq b\geq c$, we have
\begin{align*}
\begin{pmatrix}d\\l\end{pmatrix} \begin{pmatrix}d-l\\n-i-l\end{pmatrix}\begin{pmatrix}l\\n-k-i\end{pmatrix}&=
\begin{pmatrix}d\\n-i\end{pmatrix} \begin{pmatrix}n-i\\l\end{pmatrix}\begin{pmatrix}l\\n-k-i\end{pmatrix}\\
&=
\begin{pmatrix}d\\n-i\end{pmatrix} \begin{pmatrix}n-i\\k\end{pmatrix}\begin{pmatrix}k\\l+k+i-n\end{pmatrix}\\
&=\begin{pmatrix}d\\k\end{pmatrix} \begin{pmatrix}d-k\\n-i-k\end{pmatrix}\begin{pmatrix}k\\l+k+i-n\end{pmatrix}
\end{align*}
Substitute this into $b_k$, and change the order of the sums indexed by $l$ and $i$. We get
$$b_k(\X_m(\B^n_d))=\begin{pmatrix}d\\k\end{pmatrix}\sum_{i=0}^{n-k} \begin{pmatrix}d-k\\n-i-k\end{pmatrix} \sum_l \begin{pmatrix}k\\l+k+i-n\end{pmatrix}\begin{pmatrix}m-1\\m-l\end{pmatrix},$$
where $\sum_l$ is over all $l$ such that $n-i-k\leq l\leq n-i$. Notice that the sum over $l$ in the last display is the coefficient of $t^{m+k+i-n}$ in the polynomial $(1+t)^{m+k-1}=(1+t)^k(1+t)^{m-1}$, which is equal to $\binom{m+k-1}{m+k+i-n}$. Substitute $i$ by $n-k-i$ again, we get that
\begin{equation}\label{genarr1}
b_k(\X_m(\B^n_d))=\begin{pmatrix}d\\k\end{pmatrix}\sum_{i=0}^{n-k} \begin{pmatrix}d-k\\i\end{pmatrix}\begin{pmatrix}m+k-1\\m-i\end{pmatrix}.
\end{equation}
as desired. 

Remark that (\ref{genarr1}) still holds when $d\leq n$. Indeed, under the assumption $d\leq n$, (\ref{genarr1}) becomes
$$b_k(\X_m(\B^n_d))=\begin{pmatrix}d\\k\end{pmatrix}\sum_{i=0}^{d-k} \begin{pmatrix}d-k\\i\end{pmatrix}\begin{pmatrix}m+k-1\\m-i\end{pmatrix}$$
because $\binom{d-k}{i}=0$ when $i>d-k$. The sum in the last display is exactly the coefficient of $t^m$ in the polynomial $(1+t)^{m+d-1}=(1+t)^{d-k}(1+t)^{m+k-1}$. Hence $$b_k(\X_m(\B^n_d))=\begin{pmatrix}d\\k\end{pmatrix}\begin{pmatrix}m+d-1\\m\end{pmatrix},$$ which agrees with (\ref{d<n}). This finishes the proof of the theorem.
\end{proof}

\section{Generic central hyperplane arrangements}\label{secGCA}

In this section we compute the Betti numbers for contact loci and restricted contact loci of generic central hyperplane arrangements. 

Recall that a hyperplane arrangement $\G^n_d$ in $\A^n$ of $d\ge 1$ hyperplanes is called {\it generic central} if it satisfies the following condition: for any subset of hyperplanes $S\subset \G^n_d$, we have $\codim\cap S=|S|$ if $|S|\leq n$ and $\cap S=\{0\}$ if $|S|>n$. If $d\leq n$, this returns to the notion of generic arrangement.

\begin{lemma}\label{lemgencen}
If $\G^n_d$ is a generic central hyperplane arrangement, the complement admits an isomorphism
$$M(\G^n_d)\simeq M(\B^{n-1}_{d-1})\times\C^*.$$
In particular, the Betti polynomial is
\begin{align*}B(M(\G^n_d),t)&=(1+t)B(M(\B^{n-1}_{d-1}),t) =(1+t)\sum_{k=0}^{n-1}\begin{pmatrix}d-1\\k\end{pmatrix}t^k\\
&=
\sum_{k=0}^{n-1}\begin{pmatrix}d\\k\end{pmatrix}t^k+\begin{pmatrix}d-1\\n-1\end{pmatrix}t^n.
\end{align*}
\end{lemma}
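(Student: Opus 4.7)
The plan is to exhibit an explicit trivialization of $M(\G^n_d)$ as a product, using the fact that the arrangement is central and that we may single out one of its hyperplanes to play the role of a ``hyperplane at infinity''. First, pick any hyperplane $H_d \in \G^n_d$ and choose linear coordinates $x_1,\ldots,x_n$ on $\A^n$ so that $H_d = \{x_n=0\}$. Write the remaining hyperplanes as $H_i = \{h_i=0\}$ for $i=1,\ldots,d-1$, where each $h_i$ is a homogeneous linear form. Since $h_i$ is homogeneous of degree one and $x_n$ does not vanish on $M(\G^n_d)$, the rational function $\tilde h_i := h_i/x_n$ is regular on the open set $\{x_n\neq 0\}\supset M(\G^n_d)$.

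Next, I would define the morphism
$$
\phi\colon M(\G^n_d)\longrightarrow \A^{n-1}\times \C^*,\qquad
\phi(x_1,\ldots,x_n)=\bigl(\tilde h_1,\ldots,\tilde h_{n-1},\,x_n\bigr)',
$$
where I use the first $n-1$ of the $\tilde h_i$ only as linear coordinates on the affine slice $\{x_n=1\}\cong \A^{n-1}$; more intrinsically, $\phi$ is the restriction to $M(\G^n_d)$ of the $\C^*$-equivariant isomorphism $\{x_n\neq 0\}\to \A^{n-1}\times\C^*$, $(x_1,\ldots,x_n)\mapsto(x_1/x_n,\ldots,x_{n-1}/x_n,x_n)$. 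Its inverse on $\A^{n-1}\times\C^*$ sends $(y_1,\ldots,y_{n-1},\lambda)$ to $(\lambda y_1,\ldots,\lambda y_{n-1},\lambda)$, so $\phi$ is an isomorphism onto its image. Now let $\B^{n-1}_{d-1}$ be the affine arrangement in $\A^{n-1}\cong\{x_n=1\}$ cut out by the $d-1$ restrictions $\tilde h_i|_{x_n=1}$ for $i=1,\ldots,d-1$. Then clearly $\phi$ identifies $M(\G^n_d)$ with $M(\B^{n-1}_{d-1})\times\C^*$.

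The remaining point is to check that $\B^{n-1}_{d-1}$ is generic in the sense of Section~\ref{exgen}. For any $S\subset\{1,\ldots,d-1\}$, the affine intersection $\bigcap_{i\in S}\{\tilde h_i|_{x_n=1}=0\}$ is the slice at $x_n=1$ of the central edge $\bigcap_{i\in S\cup\{d\}} H_i$; by the genericity of $\G^n_d$ this slice has the expected codimension $|S|$ when $|S|+1\le n$, and is empty when $|S|+1>n$ (because in that case $\bigcap_{i\in S\cup\{d\}} H_i=\{0\}$, which has $x_n=0$). This matches the definition of a generic affine arrangement in $\A^{n-1}$. Finally, I would conclude by the K\"unneth formula
$$
B(M(\G^n_d),t)=B(M(\B^{n-1}_{d-1}),t)\cdot B(\C^*,t)=(1+t)\,B(M(\B^{n-1}_{d-1}),t),
$$
apply Lemma~\ref{genarr} to obtain $B(M(\B^{n-1}_{d-1}),t)=\sum_{k=0}^{n-1}\binom{d-1}{k}t^k$ (valid uniformly in the two ranges $d-1\le n-1$ and $d-1>n-1$ under the binomial convention of Section~\ref{secintro}), and simplify using Pascal's identity $\binom{d-1}{k}+\binom{d-1}{k-1}=\binom{d}{k}$ to reach the stated closed form $\sum_{k=0}^{n-1}\binom{d}{k}t^k+\binom{d-1}{n-1}t^n$. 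No step is a real obstacle; the only thing one has to be a little careful about is the genericity verification for $\B^{n-1}_{d-1}$, which is where the central hypothesis is essentially used.
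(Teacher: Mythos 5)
Your proof is correct and follows essentially the same route as the paper: you construct by hand the decone isomorphism $M(\G^n_d)\simeq M(d\G^n_d)\times\C^*$ (with $d\G^n_d=\B^{n-1}_{d-1}$) that the paper simply cites from \cite[Proposition 2.1 and Remark 2.3]{Dimca17}, and then conclude with K\"unneth, Lemma \ref{genarr}, and Pascal's identity, just as the paper does. Two small repairs to your write-up: first, prefer your intrinsic map $(x_1,\ldots,x_n)\mapsto(x_1/x_n,\ldots,x_{n-1}/x_n,x_n)$ throughout, since the version using $\tilde h_1,\ldots,\tilde h_{n-1}$ as coordinates only makes sense when $d\ge n$. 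Second, in the genericity check the sentence identifying $\bigcap_{i\in S}\{\tilde h_i|_{x_n=1}=0\}$ with the slice at $x_n=1$ of $\bigcap_{i\in S\cup\{d\}}H_i$ is wrong as stated: that edge is contained in $H_d=\{x_n=0\}$, so its slice at $x_n=1$ is empty. What you are slicing is $\bigcap_{i\in S}H_i$; the edge $\bigcap_{i\in S\cup\{d\}}H_i$ enters only indirectly, namely for $|S|\le n-1$ genericity gives $\codim\bigcap_{i\in S\cup\{d\}}H_i=|S|+1>\codim\bigcap_{i\in S}H_i$, hence $\bigcap_{i\in S}H_i\not\subset H_d$ and its intersection with $\{x_n=1\}$ is nonempty of codimension $|S|$ in $\A^{n-1}$, while for $|S|\ge n$ genericity gives $\bigcap_{i\in S}H_i=\{0\}\subset H_d$, so the intersection with $\{x_n=1\}$ is empty. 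With this correction the genericity of $\B^{n-1}_{d-1}$ holds and the rest of your argument goes through verbatim.
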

\begin{proof}
It is easy to see that any decone $d\G^n_d$ of $\G^n_d$ is a generic hyperplane arrangement in $\A^{n-1}$ consisting of $d-1$ hyperplanes, see \cite[Remark 2.3]{Dimca17}. Moreover, we have an isomorphism
$$M(\G^n_d)\xrightarrow{\simeq}M(d\G^n_d)\times\C^*$$
by \cite[Proposition 2.1]{Dimca17}. The rest of the conclusion follows from Lemma \ref{genarr}.
\end{proof}

\begin{lemma}\label{progencen}
Let $\G^n_d$ be a generic central hyperplane arrangement with $d>n$, and $m\in \N^*$. 

\begin{enumerate}
\item Assume that $m=pd+q$ is the Euclidean division of $m$ by $d$, then $$\X_m(\G^n_d)\simeq\bigsqcup_{\alpha=0}^p\; \bigsqcup_{j\in T(m-\alpha d),\; |I_0(j)|<n}\mathscr{C}_j(\G^n_d)$$
as isomorphism of algebraic varieties, with $I_0(j)$ defined as in (\ref{eqIJ}).

\item The $k$-th Betti number of $\bigsqcup_{j\in T(m),\; |I_0(j)|<n}\mathscr{C}_j(\G^n_d)$ is
$$\begin{pmatrix}d\\k\end{pmatrix}\sum_{i=0}^{n-1-k}\begin{pmatrix}d-k\\i\end{pmatrix}\begin{pmatrix}m+k-1\\m-i\end{pmatrix}+\sum_{l=1}^{n-1}\begin{pmatrix}d\\l\end{pmatrix}\begin{pmatrix}m-1\\m-l\end{pmatrix}\begin{pmatrix}d-l-1\\n-l-1\end{pmatrix}\begin{pmatrix}l\\k+l-n\end{pmatrix}.$$
\end{enumerate}
\end{lemma}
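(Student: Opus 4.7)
The plan is to apply Theorem \ref{thmMain2} after first identifying the complete subsets of $\G^n_d$: by genericity, any subset of size at most $n-1$ is complete, since its intersection has codimension equal to its size and no outside hyperplane contains it. For $d>n$, the only complete subset of size $\geq n$ is $\G^n_d$ itself, since any strictly smaller $S$ with $|S|\geq n$ has $\cap S=\{0\}$ but omits some hyperplane through $\{0\}$.

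For part (1), Theorem \ref{thmMain2}(iii) says $j\in T(m)$ exactly when every $S_k=\{H_i:j_i>k\}$ is complete. Given such a $j$, let $\alpha(j)\geq 0$ be the least index with $|S_\alpha|<n$; then $S_0=\cdots=S_{\alpha-1}=\G^n_d$, which forces $j_i\geq\alpha$ for every $i$. The assignment $j\mapsto(\alpha,\,j-(\alpha,\ldots,\alpha))$ is a bijection onto $\bigsqcup_{\alpha=0}^{p}\{j'\in T(m-\alpha d):|I_0(j')|<n\}$, with the constraint $\alpha\leq p$ coming from $m-\alpha d\geq 0$. I then promote this to an isomorphism of varieties using Theorem \ref{thmMain2}(iii): since $\G^n_d$ is central and $S_k=\G^n_d$ for $k<\alpha$, the factors $Z_0,Z_1^{center},\ldots,Z_{\alpha-1}^{center}$ in the product decomposition of $X_j$ all equal $\{0\}$ and the arrangements $\Aa_{j,0},\ldots,\Aa_{j,\alpha-1}$ are empty; the remaining factors coincide verbatim with those of $\mathscr{C}_{j'}(\G^n_d)$.

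For part (2), fix $l\in\{1,\ldots,n-1\}$ and $j\in T(m)$ with $|I_0(j)|=l$, writing $l_k=|I_k(j)|$. Using genericity of $\G^n_d$, I verify that $\Aa_{j,0}$ is a generic central arrangement of $d-l>n-l$ hyperplanes in $Z_0\simeq\A^{n-l}$, while for $k\geq 1$ the arrangement $\Aa_{j,k}$ is a generic central arrangement of $l_{k-1}-l_k$ hyperplanes in $Z_k^{center}$ of dimension $n-l_k$; since $l_{k-1}\leq l<n$, the number of hyperplanes is at most the dimension. Applying Lemma \ref{lemgencen} to the $k=0$ factor and Lemma \ref{genarr} to each $k\geq 1$ factor and telescoping via $\sum_{k=1}^m(l_{k-1}-l_k)=l$ yields
$$B(\mathscr{C}_j(\G^n_d),t)=(1+t)^l\left[\sum_{i=0}^{n-l-1}\binom{d-l}{i}t^i+\binom{d-l-1}{n-l-1}t^{n-l}\right].$$
A stars-and-bars count gives $\binom{d}{l}\binom{m-1}{l-1}$ such $j$. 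Summing over $l\in\{1,\ldots,n-1\}$ and extracting the coefficient of $t^k$ produces two terms: the second already matches the stated formula (using $\binom{m-1}{l-1}=\binom{m-1}{m-l}$), while the first is exactly formula (\ref{eqBsB}) with $n$ replaced by $n-1$. The combinatorial manipulation from the proof of Theorem \ref{betgenarr}, using identity (\ref{comfor}) and recognizing inner sums as coefficients of $(1+t)^{m+k-1}=(1+t)^k(1+t)^{m-1}$, then reduces the first term to $\binom{d}{k}\sum_{i=0}^{n-1-k}\binom{d-k}{i}\binom{m+k-1}{m-i}$.

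The main obstacle is careful bookkeeping: verifying that the arrangement at each level of Theorem \ref{thmMain2}'s product decomposition is genuinely generic central, and tracking which regime (hyperplane count $\leq$ dimension, versus $>$ dimension) applies so that the appropriate case of Lemma \ref{genarr} or Lemma \ref{lemgencen} can be invoked. Once this is done, the remaining combinatorial identity reduces to the one already handled in the proof of Theorem \ref{betgenarr}, specialized by the substitution $n\leadsto n-1$.
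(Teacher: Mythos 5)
Your proposal is correct and takes essentially the same route as the paper: the same characterization of complete subsets of $\G^n_d$ ($|S|<n$ or $S=\G^n_d$), the same splitting by $l=|I_0(j)|$ with the count $\binom{d}{l}\binom{m-1}{l-1}$, Lemma \ref{lemgencen} for the level-$0$ factor and the Boolean case of Lemma \ref{genarr} for levels $k\ge 1$, and the same reduction of the first sum to (\ref{eqBsB}) in dimension $n-1$ via Theorem \ref{betgenarr}; the only cosmetic difference is that you prove (1) by the explicit bijection $j\mapsto(\alpha,\,j-(\alpha,\ldots,\alpha))$ rather than the paper's induction peeling off the stratum $|I_0(j)|=d$. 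One minor caveat, shared with the paper's own wording: the remaining factors do not coincide \emph{verbatim}, since $\mathscr{C}_j(\G^n_d)$ for $j\in T(m)$ has $\alpha(d-1)$ extra trailing factors equal to $\A^n$ compared with $\mathscr{C}_{j'}(\G^n_d)$ for $j'\in T(m-\alpha d)$, so the identification holds only up to a product with an affine space -- which is harmless for the Betti-number statements this lemma feeds into.
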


\begin{proof}
(1) Note that a set $S\subset \G^n_d$ is complete if and only if $|S|<n$, or $|S|=d$. Hence
$$\X_m(\G^n_d)=\bigsqcup_{j\in T(m),\; |I_0(j)|<n}\mathscr{C}_j(\G^n_d)\sqcup\bigsqcup_{j\in T(m),\; |I_0(j)|=d}\mathscr{C}_j(\G^n_d),$$
by the proof of Theorem \ref{thmMain2}. Note that second disjoint union is isomorphic to $\X_{m-d}(\G^n_d)$. Then we obtain the claim by induction.

(2) Let us write
$$\bigsqcup_{j\in T(m),\; |I_0(j)|<n}\mathscr{C}_j(\G^n_d)=\bigsqcup_{l=1}^{n-1}\;\bigsqcup_{j\in T(m),\; |I_0(j)|=l}\mathscr{C}_j(\G^n_d).$$
We then have
$$B(\bigsqcup_{j\in T(m),\; |I_0(j)|<n}\mathscr{C}_j(\G^n_d),t)=\sum_{l=1}^{n-1}\sum_{|I_0(j)|=l}B(M((\G^n_d)_{j,0}),t)\cdot B(M((\G^n_d)_{j,\geq1}),t)$$
by the product description of $\mathscr{C}_j(\G^n_d)$ from Theorem \ref{thmMain} (iii).

When $|I_0(j)|=l$, we have $(\G^n_d)_{j,0}=\G^{n-l}_{d-l}$, and $(\G^n_d)_{j,\geq1}=\G^{nm-m+l}_l$.

Now we can use Lemma \ref{lemgencen} and obtain that 
\begin{align}\label{eqNeed}
B( & \bigsqcup_{j\in T(m),\;  |I_0(j)|<n}\mathscr{C}_j(\G^n_d),t) = \nonumber\\
&=\sum_{l=1}^{n-1}\begin{pmatrix}d\\l\end{pmatrix}\begin{pmatrix}m-1\\m-l\end{pmatrix}\left(\sum_{i=0}^{n-1-l}\begin{pmatrix}d-1-l\\i\end{pmatrix}t^i\right)(1+t)^{l+1}\\
&=\sum_{l=1}^{n-1}\begin{pmatrix}d\\l\end{pmatrix}\begin{pmatrix}m-1\\m-l\end{pmatrix}\left(\sum_{i=0}^{n-1-l}\begin{pmatrix}d-l\\i\end{pmatrix}t^i+\begin{pmatrix}d-l-1\\n-l-1\end{pmatrix}t^{n-l}\right)(1+t)^l \nonumber
\end{align}

In particular, the coefficient of $t^k$ is 
$$
\sum_{l=1}^{n-1}\begin{pmatrix}d\\l\end{pmatrix}\begin{pmatrix}m-1\\m-l\end{pmatrix}\sum_{i=0}^{n-1-l}\begin{pmatrix}d-l\\i\end{pmatrix}\begin{pmatrix}l\\k-i\end{pmatrix}+\sum_{l=1}^{n-1}\begin{pmatrix}d\\l\end{pmatrix}\begin{pmatrix}m-1\\m-l\end{pmatrix}\begin{pmatrix}d-l-1\\n-l-1\end{pmatrix}\begin{pmatrix}l\\k+l-n\end{pmatrix}.
$$
The first sum is exactly the $k$-th Betti number of $\X_m(\B_d^{n-1})$ by (\ref{eqBsB}), and can be therefore simplified as in Theorem \ref{betgenarr}, as claimed. 
\end{proof}

\begin{proof}[Proof of Theorem \ref{thmGCb}] In the case that $d$ does not divides $m$,
Lemma \ref{progencen} gives directly that
$$
b_k(\X_m(\Aa)) =\sum_{j=0}^{\lfloor \frac{m}{d}\rfloor}\sum_{l=1}^{n-1}\begin{pmatrix}d\\l\end{pmatrix}\begin{pmatrix}m-jd-1\\l-1\end{pmatrix}\sum_{i=0}^{n-1-l}\begin{pmatrix}d-1-l\\i\end{pmatrix}\begin{pmatrix}l\\k-i\end{pmatrix}.
$$
In the case that $d$ divides $m$, this summation only runs for $0\le j\le \frac{m}{d}-1$ and one has to add $b_k(M(\Aa))$, which is given by Lemma \ref{lemgencen}. The statement now follows  easily.
\end{proof}

Next we address the Betti numbers of the restricted contact loci of $\mathcal{G}^n_d$. We will use the notation $F(\Aa)$ for the Milnor fiber at the origin of a central hyperplane multi-arrangement $\Aa$. Recall that if $f$ is a defining polynomial for $\Aa$, then $F(\Aa)\simeq\{f=c\}$ for every $c\in\bC^*$. We need a few preliminary results.

\begin{lemma}\label{lemmilfib}
Let $n, d, m\in \N^*$ with $n\geq d$.
\begin{enumerate}
\item For any central hyperplane arrangement $\Aa$, and any two hyperplane arrangements $\Bb$, $\Bb'$ of the type $\G^n_d$, the Milnor fibers $F(\Aa\times \Bb)$ and $F(\Aa\times \Bb')$ are isomorphic as algebraic varieties. Moreover,
$$F(\Aa\times \G^n_d)\simeq M(\Aa\times \G^{n-1}_{d-1}).$$
In particular, $F(\G^n_d)\simeq M(\G^{n-1}_{d-1})$ and  $M(\Aa\times \G^n_d)\simeq F(\Aa\times \G^n_d)\times \C^*$.

\item The Betti polynomial of the restricted $m$-contact locus of $\G^n_d$ is
$$B(\XX_m(\G^n_d),t)=\begin{pmatrix}m+d-1\\m\end{pmatrix}(1+t)^{d-1}.$$
\end{enumerate}
\end{lemma}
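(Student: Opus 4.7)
\emph{Plan of proof.} For part (1), the key observation is that, since $n\ge d$, any generic central arrangement of type $\G^n_d$ can be brought, by a linear change of coordinates on $\A^n$, to the standard coordinate arrangement defined by $x_1x_2\cdots x_d$. This alone proves the isomorphism $F(\Aa\times\Bb)\simeq F(\Aa\times\Bb')$. For the formula $F(\Aa\times\G^n_d)\simeq M(\Aa\times\G^{n-1}_{d-1})$, I would pick a defining polynomial $f(y_1,\ldots,y_p)$ for $\Aa\subset \A^p$, so that $\Aa\times\G^n_d$ is cut out by $f(y)\,x_1\cdots x_d$. The Milnor fiber $\{f(y)x_1\cdots x_d=1\}$ forces $f(y)\ne 0$ and $x_1,\ldots,x_d\ne 0$, and solving $x_d=(f(y)x_1\cdots x_{d-1})^{-1}$ presents it algebraically as
$$\{(y,x_1,\ldots,x_{d-1},x_{d+1},\ldots,x_n)\mid f(y)\ne 0,\ x_1,\ldots,x_{d-1}\ne 0\}\;=\;M(\Aa\times\G^{n-1}_{d-1}).$$
The two particular statements then follow by taking $\Aa$ to be the empty arrangement and, for the last one, by combining with the decomposition $M(\G^n_d)\simeq M(\G^{n-1}_{d-1})\times\C^*$ supplied by Lemma~\ref{lemgencen}.

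For part (2), the plan is to apply the central case of Theorem~\ref{prorescon} to write
$$\XX_m(\G^n_d)=\bigsqcup_{j\in T(m)}F((\G^n_d)_j),$$
and to show that every summand has the same Betti polynomial $(1+t)^{d-1}$. Since $n\ge d$, genericity forces every subset of $\G^n_d$ to be complete (any hyperplane outside $S$ strictly cuts down the codimension of $\cap S$), so $T(m)=S(m)$ and $|T(m)|=\binom{m+d-1}{m}$. Next, from Theorem~\ref{thmMain2} and the genericity of $\G^n_d$, each factor $(\G^n_d)_{j,k}$ of the product decomposition of $(\G^n_d)_j$ is itself a generic central arrangement of type $\G^{n-r_k}_{s_k}$, with $r_k=|I_k(j)|$, $s_k=|J_k(j)|$, and $r_k+s_k\le d\le n$. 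A product of coordinate hyperplane arrangements being again a coordinate hyperplane arrangement on the product ambient space, I can identify $(\G^n_d)_j$, up to linear coordinate changes on each factor, with the standard arrangement $\G^N_d$ on $X_j\simeq\A^N$, where $N=\dim X_j=(m+1)n-m\ge d$. Part~(1) then yields $F((\G^n_d)_j)\simeq M(\G^{N-1}_{d-1})$, and Lemma~\ref{lemgencen} combined with Lemma~\ref{genarr}, using $d-1\le N-1$, gives $B(M(\G^{N-1}_{d-1}),t)=(1+t)^{d-1}$. Summing over the $\binom{m+d-1}{m}$ components yields the stated formula.

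The main subtlety is the identification of $(\G^n_d)_j$, which is presented abstractly as a product of smaller arrangements living in different factor spaces, with a single standard generic central arrangement $\G^N_d$. This identification rests on two ingredients: the genericity of each factor $(\G^n_d)_{j,k}$, which follows from the transversality built into $\G^n_d$ together with the inequality $r_k+s_k\le n$; and the elementary observation that a product of coordinate hyperplane arrangements is itself a coordinate hyperplane arrangement in the product ambient space. Once this identification is in hand, every Milnor fiber in the decomposition of $\XX_m(\G^n_d)$ becomes isomorphic to the same variety $M(\G^{N-1}_{d-1})$, independently of $j$, and this uniformity is precisely what produces the clean factor $\binom{m+d-1}{m}$ in the final Betti polynomial.
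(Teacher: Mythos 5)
Your proposal is correct and follows essentially the same route as the paper: part (1) by normalizing the generic factor to the coordinate arrangement $x_1\cdots x_d$ and solving $x_d=(f(y)x_1\cdots x_{d-1})^{-1}$, and part (2) by combining the decomposition of Theorem \ref{prorescon} with the observation that $T(m)=S(m)$ and that each $(\G^n_d)_j$ is a generic central arrangement of $d$ hyperplanes in a space of dimension $\ge d$, so each Milnor fiber has Betti polynomial $(1+t)^{d-1}$. The only difference is that you spell out the identification of $(\G^n_d)_j$ with a standard $\G^N_d$ (genericity of each factor, product of coordinate arrangements), a point the paper leaves implicit, which is a welcome but not essentially different addition.
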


\begin{proof}
(1) We can assume that $\Bb'$ is the hyperplane arrangement in $\A^n$ with coordinates $y_1,\ldots, y_n$ defined by $y_1\ldots y_d=0$. Assume that the defining polynomial of $\Bb$ is $g=g_1\ldots g_d$. The genericity of $\Bb$ tells us that the linear polynomials $g_1,\ldots,g_d$ are linearly independent. Thus we can find an isomorphism of $\A^n$ sending $y_i$ to $g_i$ for all $i=1,\ldots, d$. Using this isomorphism, one maps isomorphically the set $F(\Aa\times \Bb)$ to $F(\Aa\times \Bb')$.

Now assume that $f\in\C[x_1,\ldots,x_N]$ is the defining polynomial of $\Aa$, and we can assume that $\G^n_d$ is the hyperplane arrangement defined by $y_1\ldots y_d=0$. Then $F(\Aa\times\G^n_d)$ is the subvariety of $\A^N\times\A^n$ defined by $f\cdot y_1\ldots y_d=1$. The map
\begin{align*}
u:F(\Aa\times\G^n_d)&\rightarrow M(\Aa\times \G^{n-1}_{d-1})\\
(a,b)&\mapsto(a,b_1,\ldots,\widehat{b_d},\ldots,b_n)
\end{align*}
is an isomorphism with inverse
$$(a,c)\mapsto (a,c_1,\ldots,c_{d-1},\frac{1}{f(a)c_1\ldots c_{d-1}},c_d,\ldots,c_{n-1}).$$
Finally, since $\G^n_d=\G^{n-1}_{d-1}\times \G^1_1$ when $n\ge d\ge 1$, we get
$$M(\Aa\times \G^{n}_{d})= M(\Aa\times \G^{n-1}_{d-1})\times \C^*\simeq F(\Aa\times \G^n_d)\times \C^*.$$

(2) By Theorem \ref{prorescon}, we have a decomposition
$$\XX_m(\G^n_d)=\bigsqcup_{j\in T(m)}\mathscr{F}_j(\G^n_d)$$
where $\mathscr{F}_j(\G^n_d)$ is the Milnor fiber of $(\G^n_d)_j$, a generic arrangement. Hence,
$\mathscr{F}_j(\G^n_d)\simeq F(\G^n_d)\simeq M(\G^{n-1}_{d-1})$
by (1). Moreover, in this case $T(m)=S(m)$ and therefore $B(\XX_m(\G^n_d),t)=\binom{m+d-1}{m}(1+t)^{d-1}$ .
\end{proof}

We will also need the following result of \cite[Theorem 2.6]{OR93}: 

\begin{theorem}\label{lembetmil} Let $\G^n_d$ be a generic central  arrangement of $d$ hyperplanes in $\bA^n$ with $d>n\geq 2$, then for $0\leq k\leq n-2$,
$$b_k(F(\G^n_d))=\begin{pmatrix}d-1\\k\end{pmatrix},$$
and
$$b_{n-1}(F(\G^n_d))=\begin{pmatrix}d-2\\n-2\end{pmatrix}+d\begin{pmatrix}d-2\\n-1\end{pmatrix}.$$
\end{theorem}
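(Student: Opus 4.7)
The plan is to combine the Milnor fibration structure with an Euler characteristic computation via the realization of $F(\G^n_d)$ as a cyclic cover of the projectivized complement. Let $f$ be a homogeneous degree-$d$ defining polynomial of $\G^n_d$. The subgroup $\mu_d \subset \bC^*$ of $d$-th roots of unity acts freely on $F := F(\G^n_d) = \{f=1\}$ by scalar multiplication; since $\bC^*$ acts freely on $M := M(\G^n_d)$ with quotient the projectivized complement $\bP M := M/\bC^*$, one has $F/\mu_d = \bP M$, so that $F \to \bP M$ is an unramified $d$-fold cyclic cover. By \cite[Proposition 2.1]{Dimca17}, $\bP M$ is the complement of the decone $d\G^n_d$, and it is easy to check that the decone of a generic central arrangement is a generic affine arrangement. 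Since $d-1 > n-1$, Lemma \ref{genarr} combined with a Pascal telescoping yields
\[
\chi(F) \;=\; d\cdot\chi(\bP M) \;=\; d\sum_{k=0}^{n-1}(-1)^k\binom{d-1}{k} \;=\; (-1)^{n-1}d\binom{d-2}{n-1}.
\]

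Next I would use the Wang exact sequence associated with the Milnor fibration $F \hookrightarrow M \xrightarrow{f} \bC^*$:
\[
\cdots \to H^{k-1}(F;\bC) \xrightarrow{h^*-\mathrm{id}} H^{k-1}(F;\bC) \to H^k(M;\bC) \to H^k(F;\bC) \xrightarrow{h^*-\mathrm{id}} \cdots,
\]
where $h^*$ is the monodromy. For generic central arrangements, $h^*$ acts as the identity on $H^k(F;\bC)$ for every $k \le n-2$, so in that range the Wang sequence breaks into short exact sequences $0 \to H^{k-1}(F;\bC) \to H^k(M;\bC) \to H^k(F;\bC) \to 0$. Combined with $b_k(M(\G^n_d)) = \binom{d}{k}$ for $0 \le k \le n-1$ from Lemma \ref{lemgencen}, an induction starting from $b_0(F)=1$ and using Pascal's identity produces $b_k(F) = \binom{d-1}{k}$ for $0 \le k \le n-2$, confirming the low-degree formula.

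To finish, since $F$ is a smooth affine variety of complex dimension $n-1$ it has the homotopy type of a CW complex of real dimension at most $n-1$, hence $b_k(F) = 0$ for $k \ge n$. The identity $\chi(F) = \sum_{k=0}^{n-1}(-1)^k b_k(F)$ then determines $b_{n-1}(F)$ uniquely: substituting the low Betti numbers just computed together with $\sum_{k=0}^{n-2}(-1)^k\binom{d-1}{k} = (-1)^{n-2}\binom{d-2}{n-2}$ yields
\[
b_{n-1}(F) \;=\; d\binom{d-2}{n-1} + \binom{d-2}{n-2},
\]
as claimed.

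The hard part is the triviality of $h^*$ on $H^k(F;\bC)$ for $k \le n-2$, which the plan above takes as input. The natural route is via the character decomposition $H^k(F;\bC) = \bigoplus_{\chi\in\widehat{\mu_d}} H^k(\bP M; \mathcal{L}_\chi)$, where $\mathcal{L}_\chi$ is the rank-one local system on $\bP M$ associated to $\chi$; one then needs the twisted vanishing $H^k(\bP M;\mathcal{L}_\chi) = 0$ for $k<n-1$ and every non-trivial character $\chi$. This is precisely where the genericity hypothesis enters: the non-resonance conditions of the Esnault-Schechtman-Viehweg theorem (equivalently, the Aomoto-complex description of twisted cohomology) are automatically satisfied for a generic central arrangement, but verifying this for every non-trivial character of $\mu_d$ is the technical core of the argument.
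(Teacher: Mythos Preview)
The paper does not prove this statement: it is quoted as \cite[Theorem 2.6]{OR93} (Orlik--Randell) and used as a black box. So there is no in-paper proof to compare against; your proposal is an independent proof sketch of a result the paper simply imports.

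Your outline is correct. The Euler-characteristic computation via the $d$-fold cover $F\to\bP M$, the Wang sequence, and the extraction of $b_{n-1}$ from $\chi(F)$ together with the low Betti numbers are all carried out accurately. You are also right that the entire weight rests on the triviality of $h^*$ on $H^k(F;\bC)$ for $k\le n-2$, equivalently the vanishing $H^k(\bP M;\mathcal{L}_\chi)=0$ for $k<n-1$ and $\chi$ non-trivial. For the projectivized generic arrangement the only dense edges are the hyperplanes themselves (every higher-codimension flat is Boolean, hence decomposable), and the monodromy of $\mathcal{L}_\chi$ around each of them---including the hyperplane chosen as infinity in the decone---equals $\exp(2\pi ij/d)\ne 1$ for $0<j<d$. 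Thus the non-resonance hypotheses you invoke are indeed satisfied and the gap you flag closes routinely. This local-system route differs in flavor from Orlik and Randell's original direct topological argument, but it is by now a standard way to obtain Milnor-fiber Betti numbers for arrangements.
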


\begin{lemma}\label{propex3} Let $n, d, m\in \N^*$ with $d>n$. Let $\G^n_d$ be a generic central hyperplane arrangement. 
\begin{enumerate}
\item Assume that $m=pd+q$ is the Euclidean division of $m$ by $d$, then 
$$\XX_m(\G^n_d) \simeq \bigsqcup_{\alpha=0}^p\; \bigsqcup_{j\in T(m-\alpha d),\; |I_0(j)|<n}\mathscr{F}_j(\G^n_d)$$
as isomorphism of algebraic varieties.

\item We have the following isomorphism of algebraic varieties:
$$\bigsqcup_{j\in T(m),\; |I_0(j)|<n}\mathscr{C}_j(\G^n_d)\simeq\bigsqcup_{j\in T(m),\; |I_0(j)|<n}\mathscr{F}_j(\G^n_d)\times\C^*.$$
In particular, the $k$-th Betti number of $\bigsqcup_{j\in T(m),\; |I_0(j)|<n}\mathscr{F}_j(\G^n_d)$ is
\begin{equation}\label{eqbetres}
\sum_{l=1}^{n-1}\begin{pmatrix}d\\l\end{pmatrix}\begin{pmatrix}m-1\\m-l\end{pmatrix}\sum_{i=0}^{n-1-l}\begin{pmatrix}d-1-l\\i\end{pmatrix}\begin{pmatrix}l\\k-i\end{pmatrix}.
\end{equation}

\item If $d$ does not divide $m$, then $\X_m(\G^n_d)\simeq\XX_m(\G^n_d)\times \C^*$.

\end{enumerate}
\end{lemma}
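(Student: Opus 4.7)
The plan is to establish the three parts in sequence, using Theorem \ref{prorescon}, Lemma \ref{lemMAA}, Lemma \ref{lemmilfib}, and Lemma \ref{progencen}(1).

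For (1), I would start from the disjoint decomposition $\XX_m(\G^n_d)=\bigsqcup_{j\in T(m)}\mathscr{F}_j(\G^n_d)$ given by Theorem \ref{prorescon}. Because $\G^n_d$ is generic central with $d>n$, a subset $S\subset\G^n_d$ is complete if and only if $|S|<n$ or $|S|=d$; hence for $j\in T(m)$ either $|I_0(j)|<n$ or $|I_0(j)|=d$. The subtraction map $j\mapsto j-(1,\ldots,1)$ is a bijection between $\{j\in T(m):|I_0(j)|=d\}$ and $T(m-d)$. Using Lemma \ref{lemMAA} to compare the polynomials $f_j$ and $f_{j-(1,\ldots,1)}$ (they differ by a factor of $\prod_i j_i$ after shifting jet levels down by one) together with the factorial base points of Theorem \ref{prorescon}, one identifies
\[
\bigsqcup_{j\in T(m),\,|I_0(j)|=d}\mathscr{F}_j(\G^n_d)\simeq \XX_{m-d}(\G^n_d).
\]
Iterating this splitting with $m$ replaced by $m-\alpha d$ for $\alpha=0,1,\ldots,p$ yields the claimed decomposition.

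For (2), fix $j\in T(m)$ with $|I_0(j)|=l<n$. In the product $\Aa_j=\Aa_{j,0}\times\cdots\times\Aa_{j,m}$, each factor $\Aa_{j,k}$ with $k\geq 1$ and $J_k(j)\neq\emptyset$ is a generic central arrangement of type $\G^{n_k}_{d_k}$ with $n_k=n-|I_k(j)|$ and $d_k=|J_k(j)|$; the hypothesis $|I_0(j)|<n$ forces $n_k\geq d_k$ for every such $k\geq 1$. Taking $k_0=\max_i j_i\geq 1$, so that $J_{k_0}(j)\neq\emptyset$, one applies Lemma \ref{lemmilfib}(1) with $\Aa$ equal to the product of the remaining factors of $\Aa_j$ and with $\G^{n_{k_0}}_{d_{k_0}}=\Aa_{j,k_0}$:
\[
\mathscr{C}_j=M(\Aa_j)\simeq F(\Aa_j)\times\C^*=\mathscr{F}_j\times\C^*.
\]
Taking disjoint union over such $j$ and distributing the $\C^*$-factor gives the first statement. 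For the Betti number formula, divide equation (\ref{eqNeed}) from the proof of Lemma \ref{progencen}(2) by $(1+t)$ and extract the coefficient of $t^k$, which is exactly (\ref{eqbetres}).

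For (3), combine Lemma \ref{progencen}(1) with part (1) of the present lemma: both $\X_m(\G^n_d)$ and $\XX_m(\G^n_d)$ decompose as disjoint unions indexed by the same set of pairs $(\alpha,j)$ with $0\leq\alpha\leq p$ and $j\in T(m-\alpha d)$, $|I_0(j)|<n$. The hypothesis $d\nmid m$ ensures $m-\alpha d>0$ for every such $\alpha$, so the index $j=0\in T(0)$ never occurs (otherwise one would need to extract a $\C^*$-factor from $M(\G^n_d)$ itself, which would require $n\geq d$ via Lemma \ref{lemmilfib}(1), contrary to $d>n$). Applying (2) termwise yields $\mathscr{C}_j\simeq\mathscr{F}_j\times\C^*$, and distributing the $\C^*$-factor over the disjoint union gives $\X_m(\G^n_d)\simeq\XX_m(\G^n_d)\times\C^*$. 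The main obstacle is the identification in (1) of the $|I_0(j)|=d$ pieces with $\XX_{m-d}(\G^n_d)$: one must carefully match $f_j$ and $f_{j-(1,\ldots,1)}$ via Lemma \ref{lemMAA} and verify that the Theorem \ref{prorescon} base points are compatible, appealing to the invariance of Milnor fibers under varying the nonzero fiber value. Once these are tracked, the three parts unfold cleanly from the earlier results.
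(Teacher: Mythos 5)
Your proposal is correct and follows essentially the same route as the paper: the decomposition of Theorem \ref{prorescon} combined with the inductive splitting off of the $|I_0(j)|=d$ strata for (1), Lemma \ref{lemmilfib}(1) to extract the $\C^*$-factor and division of (\ref{eqNeed}) by $(1+t)$ for (2), and combining (1) and (2) for (3). The only cosmetic difference is that in (2) the paper applies Lemma \ref{lemmilfib}(1) with the single grouped generic factor $(\G^n_d)_{j,\geq 1}=\G^{nm-m+l}_l$, whereas you single out the factor $\Aa_{j,k_0}$ with $k_0=\max_i j_i$; both applications are valid.
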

\begin{proof}
(1) Follows from the decomposition in Lemma \ref{progencen}.


(2) As with previous examples, we want to write
$$\bigsqcup_{j\in T(m),\; |I_0(j)|<n}\mathscr{F}_j(\G^n_d)=\bigsqcup_{l=1}^{n-1}\bigsqcup_{j\in T(m),\; |I_0(j)|=l}\mathscr{F}_j(\G^n_d)=\bigsqcup_{l=1}^{n-1}\bigsqcup_{j\in T(m),\; |I_0(j)|=l}F((\G^n_d)_j),$$
and write $(\G^n_d)_j=(\G^n_d)_{j,0}\times(\G^n_d)_{j,\geq1}$. Since $(\G^n_d)_{j,\geq1}=\G^{nm-m+l}_l$ and $l\leq nm-m-l$, Lemma \ref{lemmilfib} implies that
$$\mathscr{C}_j(\G^n_d)=M((\G^n_d)_j)\simeq F((\G^n_d)_j)\times \C^*.$$
Thus we obtain (\ref{eqbetres}) from (\ref{eqNeed}).

(3) This follows from (1) and (2).
\end{proof}

\begin{proof}[Proof of Theorem \ref{thmRCGA}] If $d$ does not divide $m$,  Lemma \ref{propex3} gives directly that
$$
b_k(\XX_m(\Aa)) =\sum_{j=0}^{\lfloor \frac{m}{d}\rfloor}\sum_{l=1}^{n-1}\begin{pmatrix}d\\l\end{pmatrix}\begin{pmatrix}m-jd-1\\l-1\end{pmatrix}\sum_{i=0}^{n-1-l}\begin{pmatrix}d-1-l\\i\end{pmatrix}\begin{pmatrix}l\\k-i\end{pmatrix}.
$$
If $d$ divides $m$, the first summation only runs for $0\le j\le \frac{m}{d}-1$ and one has to add $b_k(F(\Aa))$, which is given by Lemma \ref{lembetmil}. The statement now follows  easily.
\end{proof}

\section{Log resolutions}\label{secLR}
In this section, we compare Theorem \ref{thmMain2} with the general description of contact loci from \cite{EiLaMu04} in terms of log resolutions. We use this comparison to prove the degeneracy of a spectral sequence from \cite{Floer} associated to an $m$-separating log resolution, relating conjecturally the restricted $m$-contact locus with the Floer cohomology of the $m$-iterate of the Milnor monodromy.

We consider first more generally a non-invertible regular function $f:X\rightarrow\bC$ on a smooth complex variety. Fix a log resolution $$\varphi:Y\rightarrow X$$ of $f$. Write
$$(f\circ\varphi)^{-1}(0)=\sum_{i=1}^sN_iE_i,$$ where  $E=\cup_{i=1}^sE_i$ is a simple normal crossings divisor, $E_i$ are the mutually distinct irreducible components of $E$, and $N_i\geq 0$. Write the relative canonical divisor as
$$K_{Y/X}:=K_{Y}-\varphi^*K_X=\sum_{i=1}^sk_iE_i.$$
Note that $\varphi$ induces the morphisms $\varphi_\infty:\mathcal{L}(Y)\to \mathcal{L}(X)$ and $\varphi_p:\mathcal{L}_p(Y)\to \mathcal{L}_p(X)$ for every $p\in\N$. For  $\nu\in \N^s$, one defines the multi-contact locus
$$\Cont^{\nu}(E)=\{\gamma\in \mathcal{L}(Y) |\ \ord_{\gamma}(E_i)=\nu_i\hbox{ for all }1\leq i\leq s\}$$
and similarly, if $\nu_i\le p$ for all $1\le i\le s$,
$$\Cont^{\nu}(E)_p=\{\gamma\in \mathcal{L}_p(Y) |\ \ord_{\gamma}(E_i)=\nu_i\hbox{ for all }1\leq i\leq s\}.$$

\begin{theorem}{\rm(}\cite[Theorem A]{EiLaMu04}{\rm)}\label{eilamu} For a log resolution $\varphi:Y\rightarrow X$ of a regular function $f:X\rightarrow \C$ on a smooth complex variety $X$, and for any positive integer $m$, there is a disjoint union decomposition 
\be\label{eqELM}\Cont^m(f)=\bigsqcup_{\sum_i \nu_iN_i=m}\varphi_{\infty}(\Cont^{\nu}(E)).
\ee
Each $\varphi_{\infty}(\Cont^{\nu}(E))$ is an irreducible constructible cylinder of codimension $\sum_i\nu_i(k_i+1)$. For every irreducible component $W$ of $\Cont^m(f)$ there is a unique $\nu$ such that $\Cont^{\nu}(E)$ dominates $W$.
\end{theorem}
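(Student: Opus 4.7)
The plan is to transport the question from $X$ to $Y$ via the arc-lifting property of the proper birational morphism $\varphi$. Because $\varphi$ is an isomorphism over $X \setminus \varphi(E)$ and $\varphi(E) \subseteq f^{-1}(0)$, every $\gamma \in \Cont^m(f)$ with $m \geq 1$ has generic point in $X \setminus f^{-1}(0)$ and hence admits a unique lift $\tilde\gamma \in \mathcal{L}(Y)$. Setting $\nu_i := \ord_{\tilde\gamma}(E_i)$, the pullback identity $\varphi^{-1}(f^{-1}(0)) = \sum_i N_i E_i$ forces $m = \sum_i \nu_i N_i$, so $\tilde\gamma \in \Cont^\nu(E)$, while conversely every $\tilde\gamma \in \Cont^\nu(E)$ pushes forward into $\Cont^{\sum_i \nu_i N_i}(f)$. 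Together with the uniqueness of the lift and of $\nu$, this yields the disjoint decomposition (\ref{eqELM}).

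Next I would describe the geometry of $\Cont^\nu(E)$ intrinsically. Setting $J := \{i : \nu_i > 0\}$ and working \'etale-locally near a point of the stratum $E_J^\circ := \bigcap_{i \in J} E_i \setminus \bigcup_{i \notin J} E_i$ in coordinates adapted to the simple normal crossings divisor $E$, one sees that at truncation level $p \geq \max_i \nu_i$ the scheme $\Cont^\nu(E)_p$ is a Zariski-locally trivial affine bundle over $E_J^\circ$. Hence $\Cont^\nu(E)$ is an irreducible cylinder of codimension $\sum_i \nu_i$ in $\mathcal{L}(Y)$, and its image $\varphi_\infty(\Cont^\nu(E))$ is irreducible and, by Chevalley-type constructibility applied at each finite truncation, a constructible cylinder in $\mathcal{L}(X)$.

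The central and most delicate ingredient is the codimension formula, and I expect this to be the main obstacle. Here I would invoke the change-of-variable technique going back to Kontsevich and refined by Denef-Loeser: on the locally closed subset where $\ord_{\tilde\gamma}(\Jac \varphi) = e$, the truncation $\varphi_p : \mathcal{L}_p(Y) \to \mathcal{L}_p(X)$ is piecewise a trivial fibration in affine spaces of dimension $e$ for $p \gg 0$. Since $\Jac \varphi = \sum_i k_i E_i$, on $\Cont^\nu(E)$ one has $e = \sum_i \nu_i k_i$, and therefore the image has codimension $\sum_i \nu_i + \sum_i \nu_i k_i = \sum_i \nu_i (k_i + 1)$ in $\mathcal{L}(X)$. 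The final assertion is then automatic: $\Cont^m(f)$ is a finite disjoint union of irreducible constructible strata, hence each of its irreducible components is the closure of exactly one $\varphi_\infty(\Cont^\nu(E))$, which identifies the unique $\nu$ that dominates it.
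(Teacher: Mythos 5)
This statement is not proved in the paper at all: it is quoted verbatim from \cite[Theorem A]{EiLaMu04}, so there is no internal argument to compare yours against; the relevant benchmark is the original Ein--Lazarsfeld--Musta\c{t}\u{a} proof, and your sketch follows exactly that route (unique lifting of finite-order arcs by properness, the local SNC computation making $\Cont^{\nu}(E)$ a cylinder of codimension $\sum_i\nu_i$, and the Denef--Loeser change-of-variables fibration over the locus $\ord_t\Jac\varphi=e$ to convert this into codimension $\sum_i\nu_i(k_i+1)$ for the image). The decomposition, disjointness, constructibility, the codimension count, and the uniqueness of the dominating $\nu$ are all handled correctly in your outline.

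The one soft spot is the irreducibility assertion. From ``$\Cont^{\nu}(E)_p$ is an affine bundle over $E_J^{\circ}$'' you conclude that $\Cont^{\nu}(E)$, and hence $\varphi_\infty(\Cont^{\nu}(E))$, is irreducible; but this inference requires the stratum $E_J^{\circ}=\bigcap_{i\in J}E_i\setminus\bigcup_{i\notin J}E_i$ to be irreducible (or at least connected), which is not automatic for an arbitrary log resolution (already for the blow-up of a nodal plane curve the stratum $E_0\cap E_1$ consists of two points). So as written this step proves irreducibility only when the strata are irreducible, and you should either add that hypothesis, argue it away, or handle the pieces componentwise as in the source; none of the other steps of your argument, nor the way the present paper uses the theorem, is affected by this point.
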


Recall that for hyperplane arrangements, one has a canonical log resolution by blowing up inductively by increasing dimension (the strict transforms of) all edges of codimension 2 or higher. In this case, Theorem \ref{thmMain2} implies that the stratification (\ref{eqELM}) is as nice as possible if $\varphi$ is the canonical log resolution:

\begin{proposition}\label{propCompELM}
If $f:X=\C^n\rightarrow\C$ is a hyperplane multi-arrangement and $\varphi:Y\ra X$ is the canonical log resolution of $f$, then (\ref{eqELM}) is the decomposition into connected components and each component is irreducible, after removing the unnecessary terms with $\Cont^\nu(E)=\emptyset$. 

More precisely, there is a 1-1 correspondence between
$$
\{\nu\in\N^s \mid \sum_i\nu_iN_i=m\text{ and } \Cont^\nu(E)\neq\emptyset\}
$$
and $T(m)$ defined as in Theorem \ref{thmMain2}, identifying a non-empty a set $\varphi_{\infty}(\Cont^{\nu}(E))$ uniquely with $\pi_m^{-1}(\mathscr{C}_j(\Aa))$ for some $j\in T(m)$, where $\pi_m:\mathcal{L}(X)\ra\mathcal{L}_m(X)$ is the truncation map.
\end{proposition}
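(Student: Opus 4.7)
The plan is to carry out the decomposition (\ref{eqELM}) explicitly for the canonical log resolution of $\Aa$ and match it term by term with the decomposition of Theorem \ref{thmMain2}. Since $\varphi:Y\to X$ is obtained by successively blowing up the edges of $\Aa$ in order of increasing dimension, the irreducible components of $E$ are the strict transforms $\tilde H_i$ of the hyperplanes $H_i$ together with the exceptional divisors $E_Z$ over the edges $Z\in L(\Aa)$ of codimension at least $2$. Standard computations give $N_{\tilde H_i}=s_i$, $k_{\tilde H_i}=0$, $N_{E_Z}=s(Z)$ and $k_{E_Z}=\codim Z-1$. Moreover, because every pairwise intersection of distinct hyperplanes has been blown up, distinct strict transforms are disjoint in $Y$; hence a collection of components of $E$ has non-empty common intersection if and only if the associated edges (viewing each $\tilde H$ as the codim-$1$ edge $H$) form a chain in $L(\Aa)$, and in such a chain at most one strict transform may appear, sitting automatically at the top.

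Consequently, a $\nu\in\N^s$ with $\Cont^\nu(E)\neq\emptyset$ and $\sum_i\nu_iN_i=m$ corresponds to a strict chain $W_1\subsetneq\ldots\subsetneq W_l$ in $L(\Aa)$, where $W_\alpha$ has codimension at least $2$ for $\alpha<l$ and $W_l$ has codimension at least $1$, together with positive integers $\nu_\alpha$ satisfying $\sum_\alpha\nu_\alpha s(W_\alpha)=m$. Under the bijection $W_\alpha\leftrightarrow T_\alpha=\{H\in\Aa:H\supset W_\alpha\}$ between edges and complete sets of $\Aa$, this is precisely the combinatorial data parametrizing $T(m)$ via Remark \ref{rmkCC}; this constructs the bijection $\nu\leftrightarrow j$.

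To verify that $\varphi_\infty(\Cont^\nu(E))=\pi_m^{-1}(\mathscr{C}_j(\Aa))$ under this bijection, I would compute contact orders directly. For each $H_i\in\Aa$ the pullback divisor is $V(h_i\circ\varphi)=\tilde H_i+\sum_{Z\in L(\Aa),\ Z\subsetneq H_i}E_Z$, where each $E_Z$ appears with coefficient $1$ (not $s(Z)$) since $h_i$ contains $Z$ only once. For $\gamma\in\Cont^\nu(E)$ corresponding to $(W_\alpha,\nu_\alpha)$, this yields
$$
\ord_t\, h_i(\varphi\circ\gamma)=\ord_\gamma(\tilde H_i)+\sum_{Z\in L(\Aa),\ Z\subsetneq H_i}\ord_\gamma(E_Z)=\sum_{\alpha\,:\,H_i\supset W_\alpha}\nu_\alpha,
$$
which by Theorem \ref{thmMain2}(iii) is exactly the $i$-th coordinate $j_i$ of the $j\in T(m)$ associated with the same chain. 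Hence $\pi_m(\varphi\circ\gamma)\in\mathscr{C}_j(\Aa)$, giving $\varphi_\infty(\Cont^\nu(E))\subset\pi_m^{-1}(\mathscr{C}_j(\Aa))$; since both collections partition $\Cont^m(f)$ and the bijection $\nu\leftrightarrow j$ has been established, this inclusion forces equality. As the $\mathscr{C}_j(\Aa)$ are the connected components of $\X_m(\Aa)$ by Theorem \ref{thmMain2}(ii) and $\pi_m$ has irreducible fibers, the $\pi_m^{-1}(\mathscr{C}_j(\Aa))$ are the connected components of $\Cont^m(f)$, so (\ref{eqELM}) is the connected-component decomposition. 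The main obstacle is the bookkeeping in this last step: keeping separate the multiplicity $s(Z)$ in the total divisor of $f$ from the weight $1$ with which each individual $h_i$ contributes to $E_Z$, and correctly handling the chains in which a strict transform $\tilde H$ appears alongside exceptional divisors.
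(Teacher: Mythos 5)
Your proposal is correct and follows essentially the same route as the paper: both exploit the canonical log resolution, with boundary divisors indexed by the edges of $\Aa$, $N_{E_Z}=s(Z)$, nonempty intersections of boundary divisors corresponding exactly to chains of edges, and the resulting bijection between the admissible $\nu$ and $T(m)$ (the paper phrases it through Theorem \ref{thmMain2}(iii), you through Remark \ref{rmkCC}). The only difference is in the last step: you verify the term-by-term matching directly by computing $\ord_t h_i(\varphi\circ\gamma)$ from the pullback divisors $\varphi^*H_i$ and then use the two partitions to force equality, whereas the paper deduces the same identification from the bijection of index sets combined with the uniqueness statement in Theorem \ref{eilamu} and the fact that the $\pi_m^{-1}(\mathscr{C}_j(\Aa))$ are the irreducible connected components of $\Cont^m(f)$.
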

\begin{proof}
By Theorem \ref{thmMain2}, 
$$
\Cont^m(f)=\bigsqcup_{j\in T(m)}\pi_m^{-1}(\mathscr{C}_j(\Aa))
$$
is the decomposition into connected components and each component is irreducible. By the last statement in Theorem \ref{eilamu}, it suffices then that we prove the claimed 1-1 correspondence. 

This follows immediately from the description of the strata of the canonical log resolution. Indeed, in this case the irreducible components of $E$ are prime divisors $E_Z$ indexed by the edges $Z\neq \A^n$ of $\Aa$ and the order of vanishing of $f$ along $E_Z$ is $N_Z=s(Z)$. For a subset $\mathcal{F}\subset L(\Aa)\setminus\{\A^n\}$, the intersection 
$$
E_\mathcal{F}=\bigcap_{Z\in \mathcal{F}}E_Z \neq\emptyset
$$
if and only if $\mathcal{F}$ is nested, that is, $\mathcal{F}$ consists of a chain of inclusions. 

Take $\nu=(\nu_Z)$ such that $\sum_Z\nu_ZN_Z=m$ and $\Cont^\nu(E)\neq \emptyset$. This is equivalent to $\sum_Z\nu_Zs(Z)=m$ and $E_\mathcal F\neq\emptyset$, where $\mathcal F=\{Z\mid \nu_Z\neq 0\}$. Hence $\mathcal F$ is a nested set of edges not equal to $\A^n$. Take each edge $Z$ in $\mathcal F$ exactly $\nu_Z$ times to obtain an increasing chain of edges appearing multiple times, and augment this chain by $\A^n$'s to obtain a chain 
$$
Z_0\subset Z_1\subset\ldots \subset Z_m=\A^n
$$
of precisely $m+1$ edges, with repetitions, such that $\sum_{k=0}^ms(Z_k)=m$. Define $S_k=\{H\in\Aa\mid Z_k\subset H\}$, and hence  obtain a chain of complete sets in $\Aa$
$$
S_0\supset S_1\supset \ldots S_m=\emptyset
$$
satisfying that $\sum_{k=0}^ms(S_k)=m$, that is, an element of $T(m)$, according to Theorem \ref{thmMain2} (iii). Since this process can be reversed, we have the claimed 1-1 correspondence.
\end{proof}

Returning for the moment to the general case of a non-invertible regular function $f:X\ra\C$ on a smooth complex variety, we say that a log resolution $\varphi:Y\ra X$ of $f$ is {\it $m$-separating} if $m_i+m_j>m$ for all $i\ne j$ with $E_i\cap E_j\neq\emptyset$. Let $W=-\sum_iw_iE_i$ be a relatively ample divisor, with $w_i\ge 0$ for all $i$, and $w_i=0$ if $E_i$ is not an exceptional divisor. 
Let $S_m=\{i\mid N_i\text{ divides }m\}$,  $S_{m,p}=\{i\in S_m\mid mw_i/N_i=-p\}$,  $E_i^\circ=E_i\setminus \cup_{j\ne i}E_j$, and $\tilde{E}_i^\circ\ra E_i^\circ$ the unramified cyclic cover of degree $m$ associated to $f$, see \cite{Floer}. This type of log resolutions are useful for computing cohomology with compact supports of the restricted $m$-contact locus $\XX_m(f)$ of $f$ in  $\mathcal L_m(X)$:

\begin{theorem}{\rm(}\cite[Theorem 1.1]{Floer}{\rm)}\label{thmFloer} For a positive integer $m$ and an $m$-separating log resolution $\varphi:Y\rightarrow X$ of a regular function $f:X\rightarrow \C$ on a smooth complex variety $X$ of dimension $n$, there is a cohomological spectral sequence
\be\label{eqSS}
E_1^{p,q}=\bigoplus_{i\in S_{m,p}} H_{2(n(m+1)-m\frac{k_i+1}{N_i}-1)-(p+q)}(\tilde{E}_i^\circ,\Z)\quad\Rightarrow\quad H_c^{p+q}(\XX_m(f),\Z).
\ee
\end{theorem}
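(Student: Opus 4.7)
The plan is to analyze $\XX_m(f)$ via the given $m$-separating log resolution $\varphi:Y\to X$ by adapting the stratification strategy of Theorem \ref{eilamu}. First I would intersect the decomposition $\X_m(f)=\bigsqcup_\nu \varphi_m(\Cont^\nu(E)_m)$ with the condition that the angular component of $f$ equals $1$. The $m$-separating hypothesis forces any $\nu\in\N^s$ with $\sum_i\nu_iN_i=m$ and $\Cont^\nu(E)\neq\emptyset$ to be supported on a single index $i$, so that $\nu=\nu_i e_i$ with $\nu_iN_i=m$ and hence $i\in S_m$. Thus $\XX_m(f)$ decomposes as a disjoint union of locally closed strata $\Xi_i$, one for each $i\in S_m$, and the contribution of all strata other than these single-component ones vanishes.

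Next I would describe each stratum $\Xi_i$ geometrically. Over $E_i^\circ$ the fibers of $\varphi_m$ are of pure dimension $n(m+1)-(k_i+1)m/N_i-\dim E_i$, and the angular component condition selects, among all $m$-jets vanishing along $E_i^\circ$ to order $\nu_i=m/N_i$, precisely those whose leading coefficient has a prescribed $N_i$-th power; globally along $E_i^\circ$ this amounts to passing to the unramified cyclic cover $\tilde{E}_i^\circ\to E_i^\circ$ of degree $m$ associated to $f$. A bundle argument then identifies $\Xi_i$ as an affine bundle over $\tilde{E}_i^\circ$ (the extra $-1$ in the target index accounts for the $\C^*$-rescaling normalization built into $\XX_m$), and Poincar\'e duality converts its compactly supported cohomology into the singular homology of $\tilde{E}_i^\circ$ shifted by $2(n(m+1)-m(k_i+1)/N_i-1)$, matching the degree appearing in the statement.

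Finally, the spectral sequence itself would come from a filtration on $\XX_m(f)$ induced by the piecewise-constant function $i\mapsto -mw_i/N_i$ built from the relatively ample divisor $W$. The value $p=-mw_i/N_i$ groups together exactly the strata indexed by $i\in S_{m,p}$, and the long exact sequences of pairs associated to successively peeling off strata of higher $p$ assemble into the required $E_1^{p,q}$ page. The main obstacle, in my view, is verifying that this filtration yields a well-defined and convergent spectral sequence: one must show that the closure partial order of the strata $\Xi_i$ is compatible with (refines) the ordering by $p$, which is the geometric content of $W$ being relatively ample for $\varphi$. Once this compatibility is established, the identification of the $E_1$ terms with the stated homology groups follows directly from the affine-bundle description of $\Xi_i$ and Poincar\'e duality.
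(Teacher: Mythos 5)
Note first that this statement is not proved in the paper you were given at all: Theorem \ref{thmFloer} is quoted verbatim from \cite[Theorem 1.1]{Floer} and is used as an input (the article only proves the degeneration statement for hyperplane arrangements, Proposition \ref{propFlDeg}). So the only meaningful comparison is with the proof in the cited reference, and your outline does follow its architecture: the $m$-separating hypothesis forces any $\nu$ with $\sum_i\nu_iN_i=m$ and $\Cont^{\nu}(E)\neq\emptyset$ to be supported on a single divisor (two intersecting components would give $\nu_iN_i+\nu_jN_j\geq N_i+N_j>m$), so $\XX_m(f)$ breaks into one piece for each $i\in S_m$; each piece fibres over a cyclic cover of $E_i^\circ$ with affine-space fibres and has dimension $n(m+1)-m(k_i+1)/N_i-1$, whence the homological shift after Poincar\'e duality; and the spectral sequence is the one attached to the filtration by the weights $p=-mw_i/N_i$.

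That said, as a proof your proposal leaves the two substantive steps as assertions. The ``bundle argument'' identifying $\Xi_i$ with an affine bundle over $\tilde{E}_i^\circ$ requires an actual proof of Zariski local triviality of the fibration; also note that what your leading-coefficient analysis $u(x)a^{N_i}=1$ produces is an $N_i$-sheeted covering of $E_i^\circ$ --- this is the correct object, and it is what the phrase ``cyclic cover of degree $m$ associated to $f$'' refers to in \cite{Floer}; a literal $m$-sheeted cover such as $\{z^m=u(x)^{-1}\}$ would already give the wrong $H^0_c$ in the example $f=x$, $m>1$. More importantly, the compatibility of the closure order of the strata with the ordering by $p$ --- which you correctly single out as the main obstacle, and which is exactly where relative ampleness of $W$ enters, e.g.\ via semicontinuity of the contact order along (a multiple of) the ideal $\varphi_*\mathcal{O}_Y(W)$ --- is only stated as needed, not established; without it there is no filtration by closed subsets and hence no spectral sequence. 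So your text is a faithful sketch of the strategy of \cite[Theorem 1.1]{Floer} rather than a complete argument: the missing content is precisely that closure/semicontinuity verification and the local triviality of the strata fibrations.
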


For hyperplane arrangements, again the situation is as easy as possible:

\begin{proposition}\label{propFlDeg} If $f:X=\C^n\rightarrow\C$ is a hyperplane multi-arrangement, the spectral sequence (\ref{eqSS}) degenerates at $E_1$ for any $m$-separating log resolution factoring through the canonical log resolution. 
\end{proposition}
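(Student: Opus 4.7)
The plan is to exploit the interplay between Proposition \ref{propCompELM} and the $m$-separating hypothesis to show that the filtration underlying the spectral sequence (\ref{eqSS}) decomposes as a direct sum, forcing all differentials $d_r$ with $r\ge 1$ to vanish for trivial reasons.

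First, I would use the $m$-separating property to constrain the relevant multi-contact loci. If $\nu\in\N^s$ satisfies $\sum_i\nu_iN_i=m$ and $\Cont^\nu(E)\ne\emptyset$, then $E_i\cap E_j\ne\emptyset$ whenever $\nu_i,\nu_j>0$, and the inequality $N_i+N_j>m$ forces $\nu_iN_i+\nu_jN_j\ge N_i+N_j>m$, a contradiction unless at most one entry of $\nu$ is nonzero. Hence the relevant $\nu$'s have the form $\nu=(m/N_i)e_i$ for some $i\in S_m$ with $\Cont^{(m/N_i)e_i}(E)\ne\emptyset$.

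Second, I would extend Proposition \ref{propCompELM} to the $m$-separating refinement $\varphi$. Since $\varphi$ factors through the canonical log resolution $\varphi_0$, each cylinder $\varphi_\infty(\Cont^{(m/N_i)e_i}(E))$ is contained in some $(\varphi_0)_\infty(\Cont^{\nu_0}(E_0))=\pi_m^{-1}(\mathscr{C}_j(\Aa))$ coming from Proposition \ref{propCompELM}. Combining with Theorem \ref{eilamu}, which guarantees that the relevant cylinders are irreducible and exhaust $\Cont^m(f)$, one obtains a bijection between the contributing $i\in S_m$ and $T(m)$, so that each cylinder $\varphi_\infty(\Cont^{(m/N_i)e_i}(E))$ is an open and closed irreducible subset of $\Cont^m(f)$. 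Intersecting with the restricted contact locus recovers the decomposition $\XX_m(f)=\bigsqcup_{j\in T(m)}\mathscr{F}_j(f)$ of Theorem \ref{prorescon} in a way compatible with the strata entering (\ref{eqSS}).

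Finally, for each contributing $i$, the associated component $\mathscr{F}_j(f)$ contributes to the $E_1$ page only in the single column $p=-mw_i/N_i$. Because the spectral sequence (\ref{eqSS}) is built naturally from the filtration of $\XX_m(f)$ by these strata, and that filtration now splits as a direct sum over the open and closed pieces $\mathscr{F}_j(f)$, each contribution lives in a single column, so every $d_r$ with $r\ge 1$ vanishes and the spectral sequence degenerates at $E_1$. The main technical obstacle I anticipate is verifying that the construction of the spectral sequence in \cite{Floer} is genuinely compatible with the decomposition of $\XX_m(f)$ into open and closed components, in the sense that the filtration splits accordingly; this should reduce to the vanishing of the connecting maps in long exact sequences for pairs $(A\sqcup B,A)$ with $A$ and $B$ both open and closed, but it requires tracking the construction of \cite{Floer} carefully.
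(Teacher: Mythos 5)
Your overall strategy is the same as the paper's: reduce everything to showing that for the $m$-separating resolution the ELM pieces $\varphi_\infty(\Cont^{(m/N_i)e_i}(E))$ coincide with the connected components $\pi_m^{-1}(\mathscr{C}_j(\Aa))$ coming from Proposition \ref{propCompELM}, so that the strata of $\XX_m(f)$ entering (\ref{eqSS}) are open and closed and each contributes to a single column. Your first step (the support of a contributing $\nu$ has size one, by $m$-separation) is correct, and your last paragraph on the splitting of the filtration over open-closed pieces is essentially how the paper concludes as well.

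The gap is in your second step. From ``each cylinder is contained in some $\pi_m^{-1}(\mathscr{C}_j(\Aa))$'' together with irreducibility and exhaustion you only get a surjection from the contributing $i\in S_m$ onto $T(m)$: every component contains at least one cylinder, but nothing you say prevents a single irreducible component from being partitioned into several disjoint constructible cylinders, only one of which is dense. The last clause of Theorem \ref{eilamu} gives uniqueness of the \emph{dominating} $\nu$, not uniqueness of the $\nu$'s whose image is merely contained in a given component; for general $f$ (e.g.\ a cusp) a component of $\Cont^m(f)$ really does break into several ELM strata, so the bijection you assert is not formal. If injectivity fails, the terms of the decomposition of $\XX_m(f)$ from \cite[Lemma 2.1]{Floer} need not be open and closed, and the ``trivial'' degeneration argument collapses. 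The paper closes exactly this hole by an induction on blowups of codimension-two intersections $E_i\cap E_j$: using $N_0'=N_1+N_2$ and $k_0'=k_1+k_2+1$, the codimension $\sum_i\nu_i'(k_i'+1)$ of each cylinder upstairs equals that of the cylinder downstairs containing it, hence the former is dense in the latter, and disjointness plus covering then force equality; starting from the canonical resolution (Proposition \ref{propCompELM}) this yields exactly one cylinder per component. Your proposal contains no such codimension/discrepancy bookkeeping, so the key claim --- that each $\varphi_\infty(\Cont^{(m/N_i)e_i}(E))$ is open and closed in $\Cont^m(f)$ --- remains unproven; supplying an argument of this kind (either the paper's blowup induction or a direct comparison of $(m/N_i)(k_i+1)$ with the codimension of the corresponding component) is what your proof still needs.
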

\begin{proof} We start with some general facts about regular functions $f:X\ra \C$ on smooth complex varieties. 

Given a fixed log resolution, one obtains an $m$-separating log resolution by blowing up successively codimension-2 intersections $E_i\cap E_j$ with $i\ne j$ with $m_i+m_j\le m$, by the proof of \cite[Lemma 2.8]{Floer}. 

Suppose now that a fixed log resolution $\varphi:Y\ra X$ of $f$ satisfies that (\ref{eqELM}) is the decomposition into connected components and each component is irreducible. Let $\rho:Y'\ra Y$ be the blowup of $Y$ at $E_1\cap E_2$, assumed to be non-empty. Then $\varphi'=\varphi\circ \rho:Y'\ra X$ is also a log resolution of $f$, and we will show that (\ref{eqELM}) for $\varphi'$ is also the decomposition into connected components. 

Let $E_0'$ be the exceptional divisor of $\rho$. For $i\ge 1$, let $E_i'$ be the strict transform of $E_i$, and let $E'=\cup_{i=0}^sE_i'$. We denote by $N'_i$ the order of vanishing of $f$ along $E_i'$, and by $k_i'$ the order of vanishing of $K_{Y'/X}$ along $E_i'$. Thus $N_0'=N_1+N_2$,  $k_0'=k_1+k_2+1$, and $N_i'=N_i$, $k_i'=k_i$ for $i\ge 1$.

We will show that the two decompositions
$$
\bigsqcup_{\sum_{i=1}^s \nu_iN_i=m}\varphi_{\infty}(\Cont^{\nu}(E))= \bigsqcup_{\sum_{i=0}^s \nu'_iN'_i=m}\varphi'_{\infty}(\Cont^{\nu'}(E'))
$$
of the (non-restricted) contact locus $\Cont^m(f)$ are the same. 

To $\nu'=(\nu'_i)_{0\le i\le s}$ such that $\nu'\cdot N'=m$, associate $\nu=(\nu_0'+\nu_1',\nu_0'+\nu_2',\nu_3',\nu_4',\ldots)$ in $\N^s$. Then $\nu\cdot N=m$, as well. Moreover, $\rho_\infty(\Cont^{\nu'}(E'))\subset \Cont^\nu(E)$, and hence, $\varphi'_{\infty}(\Cont^{\nu'}(E'))\subset \varphi_{\infty}(\Cont^{\nu}(E))$. If $\Cont^{\nu'}(E')\neq\emptyset$, then a small calculation using Theorem \ref{eilamu} shows that both $\varphi'_{\infty}(\Cont^{\nu'}(E'))$ and $\varphi_{\infty}(\Cont^{\nu}(E))$ are irreducible cyclinders of same codimension $\nu_0'(k_1+k_2+2)+\nu_1'(k_1+1)+\nu_2'(k_2+1)+\ldots$, and hence $\varphi'_{\infty}(\Cont^{\nu'}(E'))$ is dense in  $\varphi_{\infty}(\Cont^{\nu}(E))$. But since $\Cont^m(f)$ is covered by the non-empty sets $\varphi'_{\infty}(\Cont^{\nu'}(E'))$, it follows that $\varphi'_{\infty}(\Cont^{\nu'}(E'))= \varphi_{\infty}(\Cont^{\nu}(E))$.

Using Proposition \ref{propCompELM}, we have therefore now shown that an $m$-separating log resolution of a hyperplane arrangement factoring through the canonical log resolution also satisfies that (\ref{eqELM}) is the decomposition into connected components and each component is irreducible. Restricting to the restricted $m$-contact locus $\Cont^{m,1}(f)$ of $f$ in $\mathcal L(X)$, this implies that in the decomposition from \cite[Lemma 2.1]{Floer} each term is both open and closed in the Zariski topology. The rest of the proof of \cite[Theorem 1.1]{Floer} is therefore drastically simplified and one obtains the degeneration at $E_1$ of the spectral sequence for $\XX_m(f)=\Cont^{m,1}(f)_m$ in $\mathcal L_m(X)$.
\end{proof}

\end{document}